\documentclass[11pt]{article}

\usepackage[english]{babel}
\usepackage[utf8x]{inputenc}
\usepackage{amsmath}
\usepackage{dsfont}
\usepackage{graphicx}
\usepackage{amsthm}
\usepackage{amsfonts}
\usepackage{fullpage}
\usepackage[toc,page]{appendix}

\newtheorem{Lemma}{Lemma}
\newtheorem{Corollary}{Corollary}
\newtheorem{Theorem}{Theorem}
\newtheorem{Definition}{Definition}
\numberwithin{Definition}{section}
\numberwithin{Theorem}{section}
\numberwithin{Lemma}{section}
\numberwithin{Corollary}{section}

\theoremstyle{remark}
\newtheorem*{Remark}{Remark}
\newtheorem*{Assumption}{Assumption}

\newcommand{\dom}[0]{\text{dom}}
\newcommand{\E}[0]{\mathcal{E}}
\newcommand{\R}[0]{\mathds{R}}

\title{Extensions and their Minimizations on the Sierpinski Gasket}
\author{Pak Hin Li\thanks{Research supported by the Mathematics Department of the Chinese University of Hong Kong and the Undergraduate Summer Research Fellowship from the office of Academic Links of the Chinese University of Hong Kong. }, Nicholas Ryder\thanks{Research supported by the National Science Foundation through the Research Experiences 
for Undergraduates(REU) Program at Cornell.},  \\ Robert S. Strichartz\thanks{Research supported by the US NSF grant DMS-1162045.}, Baris Evren Ugurcan\thanks{Research supported by the US NSF grant DMS-1156350.}}

%Repeat the names of the authors here

\begin{document}\maketitle

\abstract{We study the extension problem on the Sierpinski Gasket ($SG$).  In the first part we consider minimizing the functional $\mathcal{E}_{\lambda}(f) = \mathcal{E}(f,f) + \lambda \int f^2 d \mu$ with prescribed values at a finite set of points where $\mathcal{E}$ denotes the energy (the analog of $\int |\nabla f|^2$ in Euclidean space) and $\mu$ denotes the standard self-similiar measure on $SG$. We explicitly construct the minimizer $f(x) = \sum_{i} c_i G_{\lambda}(x_i, x)$ for some constants $c_i$, where $G_{\lambda}$ is the resolvent for $\lambda \geq 0$.  We minimize the energy over sets in $SG$ by calculating the explicit quadratic form $\mathcal{E}(f)$ of the minimizer $f$. We consider properties of this quadratic form for arbitrary sets and then analyze some specific sets. One such set we consider is the bottom row of a graph approximation of $SG$. We describe both the quadratic form and a discretized form in terms of Haar functions which corresponds to the continuous result established in a previous paper. In the second part, we study a similar problem this time minimizing $\int_{SG} |\Delta f(x)|^2 d \mu (x)$ for general measures.  In both cases, by using standard methods we show the existence and uniqueness to the minimization problem. We then study properties of the unique minimizers. 

\noindent \emph{Keywords:} Sierpinski Gasket, Whitney extension theorem, extension from finite set of data, conductance, piecewise harmonic function, piecewise biharmonic function, Haar functions.

\noindent \emph{2010 MSC subject classification:} 28A80.
}

\section{Introduction}

In classical analysis one often wants to study finite variants of the Whitney extension theorem, in which data at a finite set of points in Euclidean space (or domains in Euclidean space or Riemannian manifolds) is prescribed, such as values and certain derivatives of a function. The problem is to find a function with the prescribed data that minimizes (or comes close to minimizing) some prescribed norm. There are many such problems depending on the nature of the data and the chosen norm. We give just a few references to recent work in this area (\cite{F}, \cite{F2}, \cite{FI}).\\

The purpose of this paper is to initiate the study of analogous problems on fractals. For the most part we restrict attention to the "poster child" of all fractals, the Sierpinski gasket (SG). Some of our results are "generic" and extend easily to Kigami's class of post-critically finite (PCF) fractals \cite{Ki}\cite{Str}. But we also include a number of results that deal specifically with the geometry of $SG$. We are interested in minimizing two Sobolev types of norms. The first, treated in section 2, is $\E(u) + \lambda \int |u|^2 d \mu$ for some fixed $\lambda \geq 0$, where $\E$ denotes the standard self-similar energy on $SG$. For $\lambda = 0$ this amounts to minimizing energy, while for $\lambda>0$ we are minimizing the analog of the $H^1$ Sobolev norm in Euclidean space. For this minimization we prescribe values of the function on a fixed finite set $E$. We prove existence and uniqueness of a minimizer, and show how to express the solution in terms of the resolvent of the Laplacian, which was studied in detail in \cite{resolvent}. In the case of $\lambda =0$ the minimizers are analogs of continuous piecewise linear functions on the line, and in fact are continuous piecewise harmonic functions on the complement of $E$. The energy of the minimizer is expressible as $$\E(u) = \sum_{\{x,y\}\subset E}c_{x,y}(u(x)- u(y))^2$$ for certain "conductance" coefficients $c_{x,y}$ depending on the set $E$. In the case when the points in $E$ are all junction points in $SG$, we give estimates for the size of the coefficients, and characterize the pairs of points with $c_{x,y} = 0$ in terms of path connectedness properties of $E$. We study in detail three specific families of sets $E$ for which we compute the coefficients explicitly. The most challenging of those we call the "bottom row". For fixed $m$, we take $E$ to be the equally spaced $2^m + 1$ points along the bottom line in $SG$. The results we obtain show that this is a good discrete approximation of the continuous problem of computing the energy of functions that are harmonic in the complement of the bottom line that was studied in \cite{OS}.\\

The second type of Sobolev norm, considered in section 3, is 
$$T_{\zeta}(u) = \int |\Delta_\zeta u|^2 d \zeta$$
where $\zeta$ is any finite positive continuous measure on $SG$ and $\Delta_\zeta$ is the associated Laplacian (\cite{Ki},\cite{Str}). We are most interested in the cases $\zeta = \mu,$ the standard self-similar measure, and $\zeta = \nu$, the Kusuoka measure. This norm is the analog of the Sobolev $H^2$ norm on the line. Here we consider two types of data: prescribing function values on  a finite set $E$, or prescribing those values and also the values of normal derivatives at a set $F$ of junction points $(F=E$ is most interesting). For the first problem we prove existence and uniqueness of the minimizer under the additional geometric hypothesis that the set $E$ is not contained in a straight line in the harmonic coordinates realization of $SG$. A simple counterexample shows that some such hypothesis is needed for uniqueness. We obtain a similar existence and uniqueness theorem for the second type of data. For the analogous problem on the line, the minimizers are $C^1$ piecewise cubic splines, so it is not surprising that on $SG$ the minimizers are piecewise biharmonic functions (solutions of $\Delta_\zeta^2u = 0$). In the case where $\zeta = \mu$ and $E$ consists of all junction points of a fixed level, we are able to give an explicit local formula for $T_\mu(u)$ in terms of the data. If we consider the minimization of $T_\mu(u)$ with just the values prescribed on all junction points of a fixed level, the problem is no longer local. In this case we are able to give an explicit linear formula relating the discrete Laplacian and the continuous Laplacian for minimizers. In principle all that is needed to obtain an explicit solution is to solve these equations, but we do not have any insight into the form of the solution. In a similar vein, we are able to express the minimizer for any data set $E$ in terms of integrals of the Green's function, but again this involves solving a certain system of linear equations.\\ \\
We mention a couple of interesting open problems relating to this work:

1) Minimize the functional $\|\Delta u\|_\infty$ with prescribed values and normal derivatives. Of course the minimizer will not be unique. The analogous problem on the line is local and was solved in \cite{G} in terms of piecewise quadratic functions with one additional cut point in each interval between points in $E$. On $SG$ the analog of quadratic functions is solutions of $\Delta u = constant$. It is not at all clear what "piecewise" should mean in this context.

2) Study analogous problems on non-PCF fractals, such as the Sierpinski carpet or products of $SG$.

\section{Minimizing Generalizations of Energy}
In this section we start with a straightforward example which sets up many methods used later. We are concerned with the space $\dom \E$.

We consider the functional $\E_\lambda: u \mapsto \mathcal{E}(u) + \lambda \int u^2 d \zeta$ defined on $\dom \E$ for $\lambda \geq 0$. Notice that given $u \in \dom \mathcal{E}$ we have that $u$ is continuous and thus is bounded in this space so $\E_\lambda(u) < \infty$.

We apply the following well known theorem:

\begin{Theorem} \label{hilbertconvexminimum}
If $X$ is a non-empty closed convex subset of a Hilbert space $H$, there exists a unique point $y \in X$ that minimizes the norm of all points in $X$.
\end{Theorem}

We first establish the following fact which we use throughout the paper.
\begin{Lemma}\label{lemma:supenergy}
Given $u \in \dom\E$, if $u(q) = 0$ for some $q \in SG$, then 
\begin{equation}\label{equation:supenergy}
\|u\|^2_\infty \leq C \mathcal{E}(u).
\end{equation}
\end{Lemma}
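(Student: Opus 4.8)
The plan is to control the pointwise oscillation of $u$ by its energy through the effective resistance metric $R$ on $SG$, and then to exploit compactness. The natural tool is the variational characterization of effective resistance,
\begin{equation*}
R(x,y) = \sup\left\{ \frac{|v(x)-v(y)|^2}{\E(v)} : v \in \dom\E,\ \E(v) > 0 \right\},
\end{equation*}
which is equivalent to the pointwise estimate
\begin{equation*}
|v(x)-v(y)|^2 \leq R(x,y)\,\E(v) \qquad \text{for all } v \in \dom\E \text{ and all } x,y \in SG.
\end{equation*}
This is the standard bridge between energy and the modulus of continuity of functions in $\dom\E$, and it is precisely the ingredient that meshes with the hypothesis $u(q) = 0$.

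Granting this, the proof is short. First I would apply the estimate with $v = u$ and $y = q$, so that, since $u(q) = 0$,
\begin{equation*}
|u(x)|^2 = |u(x) - u(q)|^2 \leq R(x,q)\,\E(u) \qquad \text{for every } x \in SG.
\end{equation*}
Next I would take the supremum over $x \in SG$ and bound $R(x,q)$ by the resistance diameter $\operatorname{diam}_R(SG) := \sup_{x,y \in SG} R(x,y)$, which yields $\|u\|_\infty^2 \leq \operatorname{diam}_R(SG)\,\E(u)$. This establishes \eqref{equation:supenergy} with $C = \operatorname{diam}_R(SG)$, a constant depending neither on $u$ nor on the vanishing point $q$.

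The only delicate point, and the main obstacle, is the finiteness of $\operatorname{diam}_R(SG)$. Here I would use that $SG$ is compact and that the resistance metric is continuous on $SG \times SG$ and induces the Euclidean topology, so that a continuous function on a compact set attains a finite maximum. Alternatively, one can bound $R(x,y)$ directly: joining $x$ to $y$ by a chain of cells and summing the cell resistances, which contract by the factor $3/5$ per level, gives a convergent geometric series and hence a uniform finite bound. If a fully self-contained argument avoiding the resistance formalism is preferred, I would instead derive the pointwise inequality from the graph approximations $\E_m$, controlling the oscillation of $u$ on each level-$m$ cell by $\E_m(u) \leq \E(u)$ and telescoping along a decreasing chain of cells shrinking to a point; this reproduces the same estimate with an explicit geometric constant.
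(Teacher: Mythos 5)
Your proposal is correct and follows essentially the same route as the paper's proof: both use the resistance estimate $|u(x)-u(q)|^2 \leq R(x,q)\,\E(u)$ together with the compactness of $SG$ in the resistance metric to bound $\sup_x R(x,q)$ by a finite constant. You simply spell out the steps (invoking $u(q)=0$, taking the supremum, and justifying finiteness of the resistance diameter) in more detail than the paper's terse two-line argument.
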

\begin{proof}
We first note the following	fact (check \cite{Str} for reference)
$$|u(x) - u(y)| \leq \mathcal{E}(u)^{1/2}R(x,y)^{1/2}$$
where $R(x,y)$ denotes the resistance metric. Since $SG$ is compact with respect to effective resistance metric, this implies
$$\|u\|^2_\infty \leq C \mathcal{E}(u).$$
\end{proof}
Now we set $X = \{ u \in \dom \E ~|~u(x_i) = a_i\}$ for all $i$ where $\{x_1, \dots, x_n\}$ is any set in $SG$ . We consider the Hilbert space $\dom \E / \text{const.}$ and the projection $\tilde{X}$ of X to this space. 
\begin{Lemma}
For $\lambda = 0$, $\tilde{X}$ is closed and convex in $ \dom \E / \text{const.}$.
\end{Lemma}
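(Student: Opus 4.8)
The plan is to establish the two properties separately: convexity is routine, while closedness is where the real work lies and where Lemma~\ref{lemma:supenergy} must be invoked.

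\medskip

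\noindent\textbf{Convexity.} First I would dispatch convexity directly. The set $X = \{u \in \dom\E \mid u(x_i) = a_i \text{ for all } i\}$ is defined by finitely many affine constraints, so for $u, v \in X$ and $t \in [0,1]$ the point $tu + (1-t)v$ satisfies $(tu+(1-t)v)(x_i) = ta_i + (1-t)a_i = a_i$, hence lies in $X$. Passing to the quotient $\dom\E/\text{const.}$ commutes with convex combinations, so $\tilde{X}$ is convex.

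\medskip

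\noindent\textbf{Closedness.} The substantive part is closedness. On the quotient space $\dom\E/\text{const.}$, the seminorm $\E(u)^{1/2}$ descends to a genuine norm, which is the Hilbert norm under consideration. My approach is to show that the point-evaluation functionals $u \mapsto u(x_i)$ are continuous on this quotient space; then $\tilde{X}$, being an intersection of preimages of closed (indeed single) points under continuous maps, is automatically closed. The difficulty is that $u \mapsto u(x_i)$ is \emph{not} well-defined on the quotient, since adding a constant shifts the value. I would resolve this by working instead with the \emph{differences} $u \mapsto u(x_i) - u(x_1)$, which \emph{are} constant on cosets and therefore descend to the quotient. Rewriting the constraints, $\tilde{X}$ is the set of cosets satisfying $u(x_i) - u(x_1) = a_i - a_1$ for $i = 2, \dots, n$, so it suffices to prove each difference functional is continuous with respect to $\E^{1/2}$.

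\medskip

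\noindent\textbf{The key estimate.} Here Lemma~\ref{lemma:supenergy} is exactly the tool needed. For a coset represented by $u$, choose the representative $w = u - u(x_1)$, which vanishes at the point $x_1$. Applying the lemma to $w$ gives $\|w\|_\infty^2 \leq C\,\E(w) = C\,\E(u)$, since energy is insensitive to additive constants. In particular $|w(x_i)| = |u(x_i) - u(x_1)| \leq \|w\|_\infty \leq C^{1/2}\E(u)^{1/2}$, which shows the difference functional is bounded, hence continuous, in the quotient norm. Therefore the map sending a coset to the vector $(u(x_i)-u(x_1))_{i=2}^n$ is continuous, and $\tilde{X}$ is its preimage of the single point $(a_i - a_1)_{i=2}^n$, which is closed. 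I expect the only subtlety to be the careful bookkeeping of passing between a coset and a distinguished representative vanishing at $x_1$, together with verifying that the constant $C$ from the lemma is uniform; the geometric input that $SG$ is compact in the resistance metric, already used in the lemma, guarantees this.
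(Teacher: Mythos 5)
Your proposal is correct and rests on exactly the same mechanism as the paper's proof: normalize representatives at $x_1$ and apply Lemma \ref{lemma:supenergy} to conclude that energy convergence controls the values at the $x_i$. The only difference is cosmetic --- you package the estimate as boundedness (hence continuity) of the difference functionals $[u] \mapsto u(x_i) - u(x_1)$ and realize $\tilde{X}$ as a preimage of a point, whereas the paper runs the identical inequality through a convergent-sequence argument; and your worry about uniformity of $C$ is unfounded, since $C$ is just the resistance diameter of $SG$, independent of the function and the base point.
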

\begin{proof}
The convexity follows immediately. We consider a sequence $u_n \to u$ with $u_n \in \tilde{X}$. We pick representatives $\widetilde{u_n}$ of $u_n$ such that $\widetilde{u_n} \in X$ and $\tilde{u}$ of $u$ such that $\tilde{u}(x_1) = a_1$. We can apply Lemma \ref{lemma:supenergy} to $\tilde{u} - \widetilde{u_n}$. Since $\E(\tilde{u} - \widetilde{u_n}) \to 0$ we get $\|\tilde{u}-\widetilde{u_n}\|_\infty \to 0$. Thus $\tilde{u} \in X$ so $\widetilde{X}$ is closed.
\end{proof}
\begin{Lemma}
For $\lambda > 0$, $X$ is closed and convex in the Hilbert space $(\dom \E, \E_\lambda)$.
\end{Lemma}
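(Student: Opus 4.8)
The convexity of $X$ is immediate: the constraints $u(x_i) = a_i$ are affine, so any convex combination $t u + (1-t) v$ of elements of $X$ again takes the value $a_i$ at each $x_i$. The substance of the lemma is the closedness, and the plan is to reduce it to a uniform-norm bound, exactly as in the $\lambda = 0$ case, except that now the extra term $\lambda \int u^2\, d\zeta$ lets me work directly in $\dom\E$ without passing to the quotient by constants.

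The key step is to show that for $\lambda > 0$ there is a constant $C$ (depending on $\lambda$ and $\zeta$) with $\|u\|_\infty^2 \leq C\,\E_\lambda(u)$ for every $u \in \dom\E$. To obtain this I would first use a pigeonhole / mean-value observation: since $u$ is continuous on the compact set $SG$, the minimum of $u^2$ is at most its $\zeta$-average, so there is a point $q$ with $u(q)^2 \leq \zeta(SG)^{-1}\int u^2\,d\zeta \leq (\lambda\,\zeta(SG))^{-1}\E_\lambda(u)$. The translate $u - u(q)$ vanishes at $q$, so Lemma \ref{lemma:supenergy} applies and gives $\|u - u(q)\|_\infty^2 \leq C'\,\E(u - u(q)) = C'\,\E(u) \leq C'\,\E_\lambda(u)$. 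Combining the two estimates through the triangle inequality $\|u\|_\infty \leq \|u - u(q)\|_\infty + |u(q)|$ yields the desired bound $\|u\|_\infty^2 \leq C\,\E_\lambda(u)$.

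With this bound in hand, closedness follows quickly. Given $u_n \in X$ with $u_n \to u$ in the $\E_\lambda$ norm, I apply the bound to the differences $u_n - u$ to conclude $\|u_n - u\|_\infty^2 \leq C\,\E_\lambda(u_n - u) \to 0$, so $u_n \to u$ uniformly. In particular $u_n(x_i) \to u(x_i)$ for each $i$, and since $u_n(x_i) = a_i$ we obtain $u(x_i) = a_i$, that is, $u \in X$.

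The only genuinely delicate point is the sup-norm estimate, and within it the role of the $\lambda$ term: Lemma \ref{lemma:supenergy} controls only oscillation, since it requires a vanishing point, so the energy alone cannot pin down the actual size of the function. The $L^2$ term supplies exactly the missing anchor by forcing the value at the pigeonhole point $q$ to be small, which is why the quotient space was needed when $\lambda = 0$ but is unnecessary here. I would also remark in passing that the same estimate confirms $(\dom\E, \E_\lambda)$ is a genuine normed (indeed Hilbert) space for $\lambda > 0$, since $\E_\lambda(u) = 0$ now forces $u \equiv 0$ rather than merely $u$ constant.
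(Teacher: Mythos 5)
Your proof is correct, but it takes a genuinely different route from the paper's. The paper argues directly on the sequence: it subtracts the prescribed value at the single constraint point $x_1$, applies Lemma \ref{lemma:supenergy} to $v_n - v = (u_n - a_1) - (u - u(x_1))$ to control the oscillation, and then expands $\int (u_n-u)^2\,d\zeta$ --- which tends to $0$ as part of the $\E_\lambda$-convergence --- to force the constant discrepancy $a_1 - u(x_1)$ to vanish, after which the lemma can be applied to $u_n - u$ itself. You instead prove an a priori inequality $\|u\|_\infty^2 \le C\,\E_\lambda(u)$ valid on all of $\dom\E$, obtaining the required vanishing point not from the constraint set but from a pigeonhole observation (the minimum of $u^2$ is at most its $\zeta$-average, hence at most $(\lambda\,\zeta(SG))^{-1}\E_\lambda(u)$), and closedness then follows in one line because $\E_\lambda$-convergence implies uniform convergence. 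Both arguments hinge on Lemma \ref{lemma:supenergy} and on the same underlying idea --- the $L^2$ term pins down the additive constant that energy alone cannot see --- but they differ in where that pinning happens. Your version buys a reusable statement: the inclusion of $(\dom\E,\E_\lambda)$ into $C(SG)$ is bounded, so \emph{any} set cut out by pointwise constraints is closed, and positive-definiteness of $\E_\lambda$ (hence that it genuinely is a Hilbert space norm) comes for free; the only extra cost is the mean-value step, which needs $0 < \zeta(SG) < \infty$. The paper's version is more minimal, exploiting only the constraint point it already has, but its conclusion is tied to the particular sequence and constraint set rather than yielding a norm comparison of independent use.
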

\begin{proof}
The convexity follows immediately. \\

We consider a sequence $u_n \to u$ with $u_n \in X$. We set $v_n = u_n - u_n(x_1) = u_n - a_1$ and $v = u - u(x_1)$. Since $\E_\lambda(u_n - u) \to 0$ we get $\E(u_n - u) \to 0$. Applying Lemma \ref{lemma:supenergy} to $v_n - v$ we get $\|v_n - v\|_\infty \to 0$. \\

Since $\E_\lambda (u_n-u) \to 0$ we also have $\int (u_n - u)^2 d \zeta \to 0.$
\begin{align*}
\int(u_n - u)^2 d \zeta &= \int ((v_n - v) + (a_1 - u(x_1)))^2 d \zeta \\
&= \int (v_n - v)^2 d \zeta + 2(a_1 - u(x_1))\int(v_n - v) d \zeta + \int (a_1 - u(x_1))^2 d \zeta .\\
\end{align*} 
Since  $\|v_n - v\|_\infty \to 0$, the above implies $a_1 = u(x_1)$. Thus we can use Lemma \ref{lemma:supenergy} to $u_n - u$ to see $\|u_n - u\|_\infty \to 0$, so $X$ is closed.
\end{proof}

Thus we get both existence and uniqueness of the minimum. 

\subsection{Construction}
To find an explicit construction, we use the functions $G_{\lambda} $ constructed in \cite{resolvent}. Thus we restrict our attention to the case where we have Dirichlet boundary conditions and we only consider the standard measure. We use the notation $\dom \E_0 = \{u \in \dom \E ~|~ u|_{V_0} \equiv 0\}$.\\

First we establish a necessary condition that parallels the Euler Lagrange equations in smooth analysis. 
\begin{Lemma} \label{lemma:eulerlagrange}
Suppose that we have $u$ which minimizes $\E_\lambda$. Then we get\begin{equation}\label{equation:eulerlagrange}\E(u,v) = -\lambda \int u v d\mu\end{equation} for all $v \in \dom \E_0$ with $v|_E \equiv 0.$
\end{Lemma}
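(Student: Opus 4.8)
The plan is to run the standard first-variation (Euler--Lagrange) argument. Fix the minimizer $u$ of $\E_\lambda$; in the Dirichlet setting of this subsection the admissible set is $X = \{w \in \dom\E_0 \mid w|_E \equiv a\}$, where $a$ denotes the prescribed values. Let $v \in \dom\E_0$ satisfy $v|_E \equiv 0$. The first thing I would check is that $v$ is an \emph{admissible direction}: for every $t \in \R$ the perturbation $u + tv$ again lies in $X$, since $(u+tv)|_{V_0} = 0$ (both $u$ and $v$ vanish on $V_0$ by definition of $\dom\E_0$) and $(u+tv)|_E = u|_E = a$ (because $v|_E \equiv 0$). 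Note that $t$ ranges over all of $\R$, not merely $t \geq 0$; this is exactly what will promote a variational inequality to the exact identity \eqref{equation:eulerlagrange}.

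Next I would introduce the scalar function $\phi(t) = \E_\lambda(u + tv)$ for $t \in \R$. Using the bilinearity and symmetry of $\E$ and expanding the integral term, $\phi$ becomes a quadratic polynomial in $t$:
\begin{equation*}
\phi(t) = \phi(0) + 2t\Bigl(\E(u,v) + \lambda \int u v\, d\mu\Bigr) + t^2\Bigl(\E(v,v) + \lambda \int v^2\, d\mu\Bigr).
\end{equation*}
All coefficients here are finite: $u,v \in \dom\E$ have finite energy, and since functions in $\dom\E$ are bounded (as noted just before Theorem \ref{hilbertconvexminimum}), the integrals $\int u v\, d\mu$ and $\int v^2\, d\mu$ are finite as well.

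Because $u$ minimizes $\E_\lambda$ over $X$ and $u + tv \in X$ for every real $t$, the point $t = 0$ is a global minimum of the one-variable polynomial $\phi$. As $\phi$ is differentiable, this forces $\phi'(0) = 0$, and reading off the linear coefficient gives $\E(u,v) + \lambda \int u v\, d\mu = 0$, which is precisely \eqref{equation:eulerlagrange}.

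There is no serious obstacle in this argument; the only points requiring care are the two verifications above — that $u + tv$ stays admissible for all real $t$, and that the perturbed functional is finite — both of which reduce to the boundedness of $\dom\E$ functions and the vanishing of $v$ on $E$ and on $V_0$. The real content of the lemma is not its proof but its use downstream: the weak identity \eqref{equation:eulerlagrange} is what will allow the minimizer to be identified with a combination of resolvent functions $G_\lambda$ in the explicit construction that follows.
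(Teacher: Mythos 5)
Your proposal is correct and follows essentially the same argument as the paper: perturb the minimizer by $u+tv$, expand $\E_\lambda(u+tv)$ as a quadratic polynomial in $t$, and set the derivative at $t=0$ to zero. The extra care you take in verifying admissibility of the perturbation and finiteness of the coefficients is sound and only makes explicit what the paper leaves implicit.
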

\begin{proof}
Suppose $u$ minimizes $\E_\lambda$ with respect to the constraints.
Let $v \in \dom{\E}_0$ with $v|_E \equiv 0.$ Given $t \in \R$ we have $u+tv \in \text{dom}\E_0$ with $u+tv|_E \equiv u|_E$. We compute
$$\E_\lambda(u+tv) = \mathcal{E}(u+tv,u+tv) + \lambda \int (u+tv)^2 d \mu$$
$$ = \mathcal{E}(u) + 2 t \E(u,v) + t^2 \E(v) + \lambda\left(\int u^2 d\mu + 2 t \int u v d \mu + t^2 \int v^2 d \mu\right).$$
Since $u$ is a minimizer, if we view the function $f(t)=\E_\lambda(u+tv)$, we must have $t = 0$ at a minimum. Thus we apply single variable calculus to show the stated result.
\end{proof}
Now we use the functions $G_{\lambda}$ constructed in $\cite{resolvent}.$ A quick calculation shows that $G_{\lambda}$ satisfies (\ref{equation:eulerlagrange}).
\begin{Lemma} \label{lemma:construction}
(Construction) Given a set $E = \{ x_1, x_2, \dots, x_n \} \subset SG$ and $a_i$ then we can guarantee unique $c_i$ such that $f(x) = \sum_i c_i G_\lambda(x,x_i)$ satisfies $f(x_i) = a_i.$ Furthermore this function is the unique minimizer of $\E_\lambda.$
\end{Lemma}
\begin{proof}
We have	a necessary	condition for the minimizer. We	show that there	exists a unique	function satisfying	this neccessary	conditions.	Then the existence of	a minimizer	guarantees this function	is in fact the minimizer.
First we show uniqueness. Suppose $u_1, u_2$ both satisfy (\ref{equation:eulerlagrange}), then we get $$\E(u_1 - u_2) = -\lambda \int (u_1 - u_2)^2 d\mu.$$ Thus we have $\E(u_1 - u_2) = 0$ which means they must differ by a harmonic function, but the Dirichlet boundary conditions guarantee that this harmonic function is identically zero so $u_1 = u_2$.   \\

To show existence, we set up the following linear system:
Let $G$ be defined by $[G]_{i,j} = G_\lambda(x_j,x_i).$ Now setting a vector of $c_i$ and $a_i$ we obtain the system
$$G c = a.$$
This is a linear map from equal dimension vector spaces. We look at the kernel of this map. If we have $G y = 0$ then we know $\tilde{f}(x) = \sum y_i G_\lambda(x,x_i)$ with $\tilde{f}(x_i) = 0$.
Note that this function	satisfies (\ref{equation:eulerlagrange}) for all appropriate $v$ so	we can apply uniqueness to show $\tilde{f} \equiv 0$	since the constant zero	function also satisfies	(\ref{equation:eulerlagrange}).
By Fundamental Theorem of Linear Algebra, injectivity implies surjectivity so we can guarantee the desired $c_i$. \\

Thus we have found the unique function satisfying (\ref{equation:eulerlagrange}) which therefore is the minimizer.
\end{proof}

Now we compute the value of this minimizer:
$$ \E_\lambda(u) = \sum_{i,j} c_i c_j \E_\lambda(G_\lambda(x,x_i),G_\lambda(x,x_j))$$
$$ = \sum_{i,j} c_i c_j G_\lambda(x_i,x_j) = \sum_i c_i a_i$$
$$ = a \cdot c = G^{-1}a \cdot a.$$

\begin{Remark} In fact, we can construct the minimizer when we do not consider the Dirichlet boundary conditions. We sketch the construction here: \\

For $\lambda \ge 0$, $\lambda$ is not a Dirichlet eigenvalue so the space of eigenfunctions is always three dimensional and it is completely determined by the boundary values on $V_0$. We denote the eigenfunctions by $u_i$ with $u_i(q_i)=1, u_i(q_{i-1})=0, u_i(q_{i+1})=0$ and $\Delta u_i=\lambda u_i$.\\

In the case $E\subset \text{SG}\setminus V_0$, let $u=\sum_it_iu_i(x)+\sum_j c_j G_{\lambda}(x_j,x)$. For all $v\in \text{dom} \E$ with $v|_E=0$, we have $\E_{\lambda}(u,v)=\sum_{k}v(q_k)\left(\sum_i t_i \partial_n u_i(q_k)+\sum_j c_j \partial_n G_{\lambda}(q_k,x_j)\right)$. Now suppose that there exist $t_i,c_j$ such that $u=\sum_it_iu_i(x)+\sum_j c_j G_{\lambda}(x_j,x)$, $u(x_i)=0$ and $\partial_nu(q_k)=0$. Then we have $\E_{\lambda}(u,v)=0$ for all $v$ with $v|_E=0$. This would imply that $u=0$.\\

Given $a_i$, we can always construct a function $u$ such that $u(x)=\sum_it_iu_i(x)+\sum_j c_j G_{\lambda}(x_j,x)$, $u(x_i)=a_i$ and also $\sum_i t_i \partial_n u_i(q_k)+\sum_j c_j \partial_n G_{\lambda}(q_k,x_j)=0$ for $k=0,1,2$. Then from the uniqueness property we can guarantee that this $u$ is the unique minimizer.

\end{Remark}

\subsection{Arbitrary Sets of Junction Points}

Now we restrict our attention to arbitrary sets $E \subset V_m$ and the standard energy i.e. $\lambda=0$. We are guaranteed that given the minimizer $u$ we have
$$\E(u) = \sum_{\{x,y\}\subset E}{c_{x,y}(u(x)-u(y))^2}.$$
We seek some basic properties for the coefficients $c_{x,y}$, which may be interpreted as conductances regarding $E$ as an electrical resistance network.

\subsubsection{Notation}
Throughout this section we consider different minimizing sets $\Gamma' \subset \Gamma \subset V_m$. \\
Given a minimizing set $\Gamma$, we let $c^\Gamma_{x,y}$ denote the coefficient for $(u(x)-u(y))^2$ in the minimizing form.  If there is no ambiguity about which set we are looking at, we will use the shorthand $c^\Gamma_{x,y} = c_{x,y}$. 
\begin{Definition}
We define an electrical path in $\Gamma' \subset V_m$ to be a sequence of points $x_1 \to ... \to x_n$ where $x_i \in \Gamma'$ and $c^{\Gamma'}_{x_i,x_{i+1}} > 0$
\end{Definition}

\subsubsection{Estimates on Restricted Coefficients}
\begin{Lemma} \label{lemma:subgraph}
Given any $\Gamma' \subset \Gamma \subset V_m$ and points $x, y \in \Gamma' \subset \Gamma$, then $c^{\Gamma'}_{x,y} \geq c^{\Gamma}_{x,y}$.
\end{Lemma}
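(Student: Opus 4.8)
The plan is to interpret the coefficients $c^\Gamma_{x,y}$ as the edge conductances of the trace of the energy form onto the finite set $\Gamma$, and to realize the passage from $\Gamma$ to the smaller set $\Gamma'$ as the elimination (Schur complement) of the intermediate vertices $S := \Gamma \setminus \Gamma'$. The first step is to record the transitivity of the minimization: since minimizing $\E(u)$ over all $u$ with prescribed values on $\Gamma'$ can be carried out by first minimizing over $u$ with prescribed values on $\Gamma$ and then minimizing over the extensions of the $\Gamma'$-data to $\Gamma$, the trace form
$$\mathcal{E}^{\Gamma'}(g) = \min\{\E(u) : u|_{\Gamma'} = g\}$$
equals the trace onto $\Gamma'$ of the finite quadratic form $\mathcal{E}^{\Gamma}(g) = \sum_{\{a,b\}\subset\Gamma} c^\Gamma_{a,b}(g(a)-g(b))^2$. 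This reduces the statement to a purely finite-dimensional fact about node elimination in a resistance network, and it is where the existence and uniqueness established in Lemma \ref{lemma:construction} are used to guarantee that the minimizers, hence the forms, are well defined.

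Next I would pass to the graph Laplacian $L$ of the $\Gamma$-network, defined by $L_{a,b} = -c^\Gamma_{a,b}$ for $a \neq b$ and $L_{a,a} = \sum_{b \neq a} c^\Gamma_{a,b}$, so that $\mathcal{E}^{\Gamma}(g) = g^{T} L g$. Writing $L$ in block form with respect to the partition $\Gamma = \Gamma' \sqcup S$,
$$L = \begin{pmatrix} A & B \\ B^{T} & D \end{pmatrix},$$
the standard computation of the minimum of a quadratic form in the $S$-variables shows that the trace form on $\Gamma'$ has Laplacian equal to the Schur complement $A - B D^{-1} B^{T}$. Reading off the off-diagonal entry for $x,y \in \Gamma'$ gives the key identity
$$c^{\Gamma'}_{x,y} = c^{\Gamma}_{x,y} + \left(B D^{-1} B^{T}\right)_{x,y},$$
so the lemma is equivalent to the positivity of the correction term $\left(B D^{-1} B^{T}\right)_{x,y}$.

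Finally I would establish $\left(B D^{-1} B^{T}\right)_{x,y} \geq 0$. The entries of $B$ are $B_{x,s} = -c^\Gamma_{x,s} \leq 0$, using that the trace of a resistance form on $SG$ is again a resistance form with nonnegative conductances; thus it suffices to show that $D^{-1}$ is entrywise nonnegative, since then
$$\left(B D^{-1} B^{T}\right)_{x,y} = \sum_{s,t \in S} c^\Gamma_{x,s}\,(D^{-1})_{s,t}\,c^\Gamma_{y,t} \geq 0.$$
The matrix $D$ is a principal submatrix of the positive semidefinite Laplacian $L$; it has positive diagonal, nonpositive off-diagonal entries, and is diagonally dominant, with strict dominance in the rows corresponding to vertices of $S$ adjacent to $\Gamma'$. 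Hence $D$ is a nonsingular symmetric M-matrix (a Stieltjes matrix), and such matrices have entrywise nonnegative inverse. I expect the one genuinely delicate point to be the invertibility of $D$, equivalently strict diagonal dominance within each component: this can fail only if some connected component of the $\Gamma$-network lies entirely inside $S$, which contributes nothing to the pair $\{x,y\}\subset \Gamma'$ and is handled by restricting to the component containing $x$ and $y$ — an issue that does not arise on $SG$, where the network on any $\Gamma \subset V_m$ is connected. Assembling these three steps yields $c^{\Gamma'}_{x,y} \geq c^{\Gamma}_{x,y}$.
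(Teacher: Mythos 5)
Your proposal is correct, but it takes a genuinely different route from the paper. The paper reduces to the case where $\Gamma$ and $\Gamma'$ differ by a single point, solves explicitly for the optimal value at the eliminated vertex, and derives the closed-form identity (\ref{equation:conductance}), namely $c^{\Gamma'}_{x_1,x_2} = c_{1,2} + \frac{c_{1,n+1}c_{2,n+1}}{\sum_{i} c_{i,n+1}}$, from which monotonicity is immediate; iterating one-point elimination handles the general case. You instead eliminate all of $S = \Gamma\setminus\Gamma'$ in one block step via the Schur complement $A - BD^{-1}B^{T}$, and conclude from $B \le 0$ entrywise together with $D^{-1} \ge 0$ entrywise, the latter because $D$ is a Stieltjes matrix. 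The two arguments are consistent: when $|S|=1$ your correction term $(BD^{-1}B^{T})_{x,y}$ is exactly the paper's $\frac{c_{1,n+1}c_{2,n+1}}{\sum_i c_{i,n+1}}$, since $D$ is then the positive scalar $\sum_i c_{i,n+1}$. What the paper's route buys is the explicit recursion (\ref{equation:conductance}) itself, which is not just a proof device but is reused later (in Lemma \ref{lemma:gamma} and in the lower-bound estimates); what your route buys is a one-shot, induction-free argument, at the cost of invoking M-matrix theory and having to verify invertibility of $D$ --- a point you correctly flag and resolve via connectivity of the traced network on $\Gamma$, which indeed holds since the effective resistance between any two points of $SG$ is finite (equivalently, $D$ is positive definite as a proper principal submatrix of the Laplacian of a connected network). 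One citation quibble: the existence and uniqueness underlying the trace forms is the Hilbert-space projection argument at the start of Section 2, not Lemma \ref{lemma:construction}, which concerns the $G_\lambda$ construction under Dirichlet boundary conditions; and note that both your argument and the paper's tacitly use nonnegativity of the traced conductances, a standard fact about traces of resistance forms.
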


\begin{proof}
It suffices to show this for the case where $\Gamma'$ and $\Gamma$ differ by a point. Let $\Gamma = \{x_1, ..., x_{n+1}\}$ and $\Gamma' = \{x_1, ..., x_n\}$ and let $u(x_i) = y_i$ so we have 
$$E(u) = \sum_{1 \leq i < j \leq n}{c_{i,j}(y_i - y_j)^2} + \sum_{i = 1}^{n}{c_{n+1,i}(y_i - y_{n+1})^2}.$$
We differentiate with respect to $y_{n+1}$ to find the conductances for the subgraph.
$$\partial_{y_{n+1}} E(u) = \sum_{i=1}^{n}{2 c_{i,n+1} (y_{n+1} - y_i)} = 0~~\text{so}$$
$$y_{n+1} \sum_{i=1}^{n}{c_{i,n+1}}= \sum_{i=1}^{n}{c_{i,n+1} y_i}~~\text{so}$$
$$y_{n+1} = \frac{\sum_{i=1}^{n}{c_{i,n+1} y_i}}{ \sum_{i=1}^{n}{c_{i,n+1}}}.$$
Thus the energy for the subgraph is

$$E(u) = \sum_{1 \leq i < j \leq n}{c_{i,j}(y_i - y_j)^2} + \sum_{i = 1}^{n}{c_{n+1,i}(y_i - \frac{\sum_{j=1}^{n}{c_{j,n+1} y_j}}{ \sum_{j=1}^{n}{c_{j,n+1}}})^2}.$$
To simplify notation let the points be $x_1$ and $x_2$. We want to show the conductance increases.
We have 
$$c^{\Gamma'}_{x_1, x_2} = \frac{-1}{2}\partial_{y_1y_2}E(u)$$
$$= c_{1,2} + c_{1,n+1}\left(1-\frac{c_{1,n+1}}{\sum_{i=1}^{n}{c_{i,n+1}}}\right)\left(\frac{c_{2,n+1}}{\sum_{i=1}^{n}{c_{i,n+1}}}\right) + c_{2,n+1}\left(1-\frac{c_{2,n+1}}{\sum_{i=1}^{n}{c_{i,n+1}}}\right)\left(\frac{c_{1,n+1}}{\sum_{i=1}^{n}{c_{i,n+1}}}\right)$$
$$ - \sum_{j=3}^{n}\left(c_{j,n+1}\frac{c_{1,n+1}}{\sum_{i=1}^{n}{c_{i,n+1}}}\frac{c_{2,n+1}}{\sum_{i=1}^{n}{c_{i,n+1}}}\right)$$
$$ = c_{1,2} + \frac{c_{1,n+1}c_{2,n+1}}{(\sum_{i=1}^{n}{c_{i,n+1}})^2}\left(\sum_{i=1}^{n}{c_{i,n+1}} - c_{1,n+1} + \sum_{i=1}^{n}{c_{i,n+1}} - c_{2, n+1} - \sum_{i=3}^{n}{c_{i,n+1}}\right)$$
$$ = c_{1,2} + \frac{c_{1,n+1}c_{2,n+1}}{\sum_{i=1}^{n}{c_{i,n+1}}}.$$
Thus we obtain
\begin{equation}\label{equation:conductance}
c^{\Gamma'}_{x_1, x_2}  = c_{1,2} + \frac{c_{1,n+1}c_{2,n+1}}{\sum_{i=1}^{n}{c_{i,n+1}}}.
\end{equation}

\end{proof}

\begin{Corollary} \label{corollary: neighbors}
Given any subset $A\subseteq V_m$, if $ x,y \in A$ such that $x \underset{m}{\sim} y$, then 
$$(\frac{5}{3})^m \leq c^A_{x,y} \leq R(x,y)^{-1} \leq 4(\frac{5}{3})^m$$where $R(x,y)$ denotes the effective resistance between $x$ and $y.$
\end{Corollary}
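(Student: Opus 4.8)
The plan is to sandwich $c^A_{x,y}$ between the two extreme minimizing sets, $\{x,y\}$ and all of $V_m$, using the monotonicity of conductances from Lemma \ref{lemma:subgraph}. Since $\{x,y\} \subseteq A \subseteq V_m$, two applications of that lemma give
$$c^{V_m}_{x,y} \le c^A_{x,y} \le c^{\{x,y\}}_{x,y}.$$
It then suffices to identify the two end terms with $(5/3)^m$ and $R(x,y)^{-1}$ respectively, and finally to bound $R(x,y)^{-1}$ from above by $4(5/3)^m$.

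For the left end I would observe that the minimizer of $\E$ with values prescribed on all of $V_m$ is exactly the function harmonic in each $m$-cell, so its energy is the level-$m$ graph energy $\E_m(u) = (5/3)^m \sum_{p \underset{m}{\sim} q}(u(p)-u(q))^2$. Because two distinct $m$-cells meet in at most one vertex, each neighboring pair lies in a unique cell, so reading off the coefficients (equivalently, the off-diagonal entries of the level-$m$ graph Laplacian, in the sense of $c_{p,q} = -\tfrac12\partial_{y_p}\partial_{y_q}\E$ used in Lemma \ref{lemma:subgraph}) forces $c^{V_m}_{x,y} = (5/3)^m$ for $x \underset{m}{\sim} y$. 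This is the desired lower bound $(5/3)^m \le c^A_{x,y}$.

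For the right end, with $E=\{x,y\}$ the quadratic form is the single term $c^{\{x,y\}}_{x,y}(u(x)-u(y))^2$, and by construction this equals $\min\{\E(u): u(x),u(y)\ \text{fixed}\}$. The variational characterization of the resistance metric (the sharp inequality $|u(x)-u(y)| \le \E(u)^{1/2}R(x,y)^{1/2}$ already used in Lemma \ref{lemma:supenergy}) identifies this minimum as $(u(x)-u(y))^2/R(x,y)$, whence $c^{\{x,y\}}_{x,y} = R(x,y)^{-1}$. This yields the middle inequality $c^A_{x,y}\le R(x,y)^{-1}$.

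It remains to show $R(x,y)^{-1} \le 4(5/3)^m$, which by the same variational principle amounts to exhibiting a single competitor in $\inf\{\E(u): u(x)=1,\ u(y)=0\}$ of energy at most $4(5/3)^m$. I would take $u$ equal to the Kronecker function $\delta_x$ on $V_m$ and harmonic in each $m$-cell; then $u(x)=1$, $u(y)=0$ (as $y\neq x$), and $\E(u)=\E_m(\delta_x)=(5/3)^m\deg_m(x)$, where $\deg_m(x)$ counts the level-$m$ edges at $x$. Since every vertex of the level-$m$ graph of $SG$ has degree at most $4$ (exactly $4$ at interior junction points, $2$ at the corners of $V_0$), this gives $R(x,y)^{-1}\le 4(5/3)^m$. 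The only genuinely geometric input is this degree bound, and the rest is the monotonicity lemma together with the two standard energy/resistance identifications; accordingly, I expect the main (though mild) obstacle to be selecting this test function and checking that its harmonic extension has energy exactly $(5/3)^m\deg_m(x)$.
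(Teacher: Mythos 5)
Your proof is correct and follows essentially the same route as the paper: both arguments sandwich $c^A_{x,y}$ via the monotonicity Lemma \ref{lemma:subgraph} applied to $\{x,y\} \subseteq A \subseteq V_m$, and identify the two endpoints as $c^{V_m}_{x,y} = (\frac{5}{3})^m$ and $c^{\{x,y\}}_{x,y} = R(x,y)^{-1}$. The only divergence is the final bound: the paper simply cites estimate (1.6.6) of \cite{Str} for $R(x,y)^{-1} \leq 4(\frac{5}{3})^m$ when $x \underset{m}{\sim} y$, whereas you prove it directly by taking the harmonic extension of the Kronecker delta at $x$ as a competitor, giving $R(x,y)^{-1} \leq \E(u) = (\frac{5}{3})^m \deg_m(x) \leq 4(\frac{5}{3})^m$; this degree-four test-function argument is valid (it is essentially how the cited estimate is proved) and makes your write-up self-contained where the paper defers to a reference.
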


\begin{proof}
We have that $\Gamma_{x,y} \subset A \subset V_m$ where $\Gamma_{x,y} = \{x, y\}$ is the restriction to just $x$ and $y$. On $V_m$, by Lemma \ref{lemma:subgraph}, we have 
$$(\frac{5}{3})^m = c^{V_m}_{x,y} \leq c^A_{x,y}.$$ 

We also have  $$c^A_{x,y}\leq c^{\Gamma_{x,y}}_{x,y} = R(x,y)^{-1} $$ where $R(x,y)$ is the effective resistance metric so we have $R(x,y)^{-1} \leq 4(\frac{5}{3})^m$  from (1.6.6) in \cite{Str} since $x \underset{m}{\sim} y$.

\end{proof}

\subsubsection{Zero Coefficients}
Throughout this section we let $\Gamma = V_m$ be our ambient space.
We set out to prove the following theorem:
\begin{Theorem} \label{zerocoefficients}

Given $x, y \in \Gamma' \subset \Gamma,$ $c^{\Gamma'}_{x,y} = 0$ if and only if every path in $\Gamma$ from $x$ to $y$ intersects $\Gamma'$.
\end{Theorem}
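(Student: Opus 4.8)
The plan is to prove the logically equivalent positive statement: for $x,y\in\Gamma'$, the effective conductance $c^{\Gamma'}_{x,y}$ is strictly positive if and only if there is a path from $x$ to $y$ in $\Gamma=V_m$ none of whose \emph{interior} vertices lies in $\Gamma'$. Here a path in $\Gamma$ is a sequence of $\underset{m}{\sim}$-neighbors (equivalently, of vertices joined by positive $c^{V_m}$), and ``intersects $\Gamma'$'' in the statement must be read as ``has an interior vertex in $\Gamma'$''; since the endpoints $x,y$ always lie in $\Gamma'$, excluding them is what makes the statement nonvacuous. Negating both sides then recovers the theorem exactly. The engine is Lemma \ref{lemma:subgraph}: I would eliminate the vertices of $V_m\setminus\Gamma'$ one at a time, obtaining a decreasing chain $V_m=R_0\supset R_1\supset\cdots\supset R_k=\Gamma'$ with $x,y\in R_j$ for every $j$, and at each step apply the single-node reduction formula (\ref{equation:conductance}).

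Along this chain I would maintain the invariant: \emph{for all $a,b\in R_j$, one has $c^{R_j}_{a,b}>0$ if and only if there is a path from $a$ to $b$ in $V_m$ all of whose interior vertices lie in $V_m\setminus R_j$.} Two elementary facts keep the bookkeeping honest. First, all conductances stay $\ge 0$: this holds at the base, where $c^{V_m}_{a,b}=(5/3)^m$ when $a\underset{m}{\sim}b$ and $c^{V_m}_{a,b}=0$ otherwise, and (\ref{equation:conductance}) only adds a nonnegative term at each step. Second, because (\ref{equation:conductance}) only \emph{adds} to existing conductances, a pair once positively connected stays connected. The base case $R_0=V_m$ is then immediate: $c^{V_m}_{a,b}>0$ exactly when $a\underset{m}{\sim}b$, i.e. exactly when $a,b$ are joined by a single edge, which is a path with empty interior.

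For the inductive step, write $R_{j+1}=R_j\setminus\{z\}$ with $z\notin\{x,y\}$. Formula (\ref{equation:conductance}) gives $c^{R_{j+1}}_{a,b}=c^{R_j}_{a,b}+\frac{c^{R_j}_{a,z}\,c^{R_j}_{b,z}}{\sum_{w\in R_j}c^{R_j}_{w,z}}$, so (the denominator being positive when $z$ is not isolated) $c^{R_{j+1}}_{a,b}>0$ iff either $c^{R_j}_{a,b}>0$, or both $c^{R_j}_{a,z}>0$ and $c^{R_j}_{b,z}>0$. In the first case the invariant for $R_j$ furnishes a path with interior in $V_m\setminus R_j\subset V_m\setminus R_{j+1}$; in the second case I concatenate the path from $a$ to $z$ and the path from $z$ to $b$ supplied by the invariant, producing a path with interior in $(V_m\setminus R_j)\cup\{z\}=V_m\setminus R_{j+1}$. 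Conversely, given a \emph{simple} path from $a$ to $b$ with interior in $V_m\setminus R_{j+1}$: if it avoids $z$ its interior lies in $V_m\setminus R_j$ and the invariant yields $c^{R_j}_{a,b}>0$; if it passes through $z$ (exactly once, by simplicity), splitting at $z$ gives a path from $a$ to $z$ and a path from $z$ to $b$ whose interiors avoid $z$ and hence lie in $V_m\setminus R_j$, whence $c^{R_j}_{a,z}>0$ and $c^{R_j}_{b,z}>0$. Either way $c^{R_{j+1}}_{a,b}>0$. The degenerate case where $z$ is isolated in $R_j$ (zero denominator) is dispatched by reading the added term as $0$: no path can then route through $z$ as an interior vertex, and the invariant passes unchanged. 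After $k$ steps $R_k=\Gamma'$, and the invariant is precisely the claim.

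The main obstacle I anticipate is not any single computation but getting the invariant's bookkeeping exactly right. The equivalence must be stated for \emph{all} pairs in the current vertex set, not just for $x,y$, so that the inductive step can invoke it on the auxiliary pairs $(a,z)$ and $(b,z)$; and the path-splitting in the converse direction must use simple paths and the correct (single) occurrence of $z$, so that the two sub-paths genuinely omit $z$ from their interiors. I would also take care either to justify that the denominator $\sum_{w\in R_j}c^{R_j}_{w,z}$ stays positive at each elimination—equivalently that the reduced network remains connected—or to dispose of the isolated-vertex case explicitly as above.
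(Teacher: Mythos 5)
Your proof is correct, but it reaches the theorem by a partly different route than the paper. For the direction ``a path avoiding the rest of $\Gamma'$ implies $c^{\Gamma'}_{x,y}>0$'' you do essentially what the paper does: eliminate the points of $\Gamma\setminus\Gamma'$ one at a time and propagate positivity through the single-vertex reduction formula (\ref{equation:conductance}); this is the content of Lemmas \ref{lemma:gamma} and \ref{lemma: electrical path} and the induction that follows them. The real divergence is in the other direction. The paper proves ``every path hits $\Gamma'$ implies $c^{\Gamma'}_{x,y}=0$'' by a separate structural argument: it introduces the propagation set $P(y)$, partitions the edges of $\Gamma$ into those meeting $P(y)$ and the rest, and observes that the energy then splits as $\E_F+\E_G$ with $u(x)$ and $u(y)$ appearing in different summands, so the cross-derivative $\partial_{u(x)u(y)}\E(u)$ vanishes. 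You instead get this direction for free from the same elimination induction, by upgrading the one-sided monotonicity of Lemma \ref{lemma:subgraph} to a two-sided invariant: after each elimination, $c^{R_{j+1}}_{a,b}>0$ if and only if $c^{R_j}_{a,b}>0$ or both $c^{R_j}_{a,z}>0$ and $c^{R_j}_{b,z}>0$, which is exactly what (\ref{equation:conductance}) yields once all coefficients are known (inductively) to be nonnegative; unwinding the induction converts positivity into an explicit path, and the contrapositive is the vanishing statement. Your version is more unified and self-contained --- no propagation set is needed, both directions come from one induction over all pairs, and you handle the simple-path splitting and the isolated-vertex degeneracy explicitly, details the paper leaves implicit. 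What the paper's decoupling argument buys in exchange is structural insight: it exhibits the energy as a sum of two functionals depending on disjoint sets of variables, which explains why the coefficient vanishes rather than only deriving it from the algebra of the reduction formula. Both arguments ultimately rest on the same engine, Lemma \ref{lemma:subgraph} and its formula (\ref{equation:conductance}).
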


\begin{Lemma}
If $x, y \in \Gamma' \subset \Gamma$ and every path from $x$ to $y$ intersects $\Gamma'$, then $c^{\Gamma'}_{x,y} = 0$.
\end{Lemma}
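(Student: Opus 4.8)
The plan is to run the node-elimination recursion supplied by Lemma~\ref{lemma:subgraph} and to track which pairs acquire positive conductance. Recall that the reduced network on $\Gamma'$ is obtained from the full weighted graph $V_m$ by eliminating, one at a time, the points of $S := V_m \setminus \Gamma'$; by the order-independence of this reduction (equivalently, of minimizing the energy over the eliminated variables) we are free to fix any elimination order that retains all of $\Gamma'$ until the end. At the elimination of a node $p$, formula~(\ref{equation:conductance}) updates the conductance of every remaining pair $(a,b)$ by
\[
c'_{a,b} = c_{a,b} + \frac{c_{a,p}\,c_{b,p}}{\sum_i c_{i,p}},
\]
and since $V_m$ is connected each intermediate network is connected, so $\sum_i c_{i,p} > 0$ and every $c_{a,b}$ stays $\geq 0$. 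The crucial feature is that this update adds a \emph{non-negative} quantity, so no cancellation ever occurs.

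First I would observe that the hypothesis rules out $x \underset{m}{\sim} y$: the single edge $x \to y$ is a path whose interior meets $\Gamma'$ nowhere, so if $x$ and $y$ were $m$-neighbors the hypothesis would fail; hence the initial conductance $c^{V_m}_{x,y}$ is $0$. More generally I would prove, by induction on the set $T \subseteq S$ of already-eliminated nodes, the support statement: if the current conductance $c_{a,b}$ is positive, then there is a path in $V_m$ from $a$ to $b$ all of whose interior vertices lie in $T$. The base case $T = \emptyset$ is exactly that $c^{V_m}_{a,b} > 0$ iff $a \underset{m}{\sim} b$. For the inductive step, suppose $c'_{a,b} > 0$ after eliminating $p$; by non-negativity of both summands either $c_{a,b} > 0$, giving a path with interior in $T \setminus \{p\}$, or else $c_{a,p} > 0$ and $c_{b,p} > 0$, giving paths $a \to \cdots \to p$ and $p \to \cdots \to b$ with interiors in $T \setminus \{p\}$ whose concatenation is a path from $a$ to $b$ through $p$ with all interior vertices in $T$.

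Applying the support statement at the final stage $T = S$ then yields the conclusion: if $c^{\Gamma'}_{x,y} > 0$ there would be a path from $x$ to $y$ whose interior vertices all lie in $S = V_m \setminus \Gamma'$, i.e. a path meeting $\Gamma'$ only at its endpoints, contradicting the hypothesis that every path from $x$ to $y$ intersects $\Gamma'$. Since conductances are non-negative, this forces $c^{\Gamma'}_{x,y} = 0$. (For this Lemma only this one direction of the support statement is needed; the reverse direction would supply the converse half of Theorem~\ref{zerocoefficients}.)

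I expect the only delicate points to be bookkeeping rather than substance: confirming that the reduced network is independent of the elimination order, so that ``eliminate $S$, keeping $\Gamma'$ to the end'' is legitimate, and that the denominators $\sum_i c_{i,p}$ never vanish, which follows from connectivity of $V_m$. A slicker but heavier alternative would bypass the recursion: write the $V_m$-Laplacian in block form with blocks indexed by $\Gamma'$ and $S$, identify the reduced form as the Schur complement $A - B D^{-1} B^{T}$, and use that $D^{-1}$ is entrywise non-negative with $(D^{-1})_{st} > 0$ exactly when $s,t$ lie in the same component of the subgraph induced on $S$; the $(x,y)$ off-diagonal entry then vanishes precisely when no $S$-neighbor of $x$ is $S$-connected to an $S$-neighbor of $y$. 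I would nonetheless present the inductive argument as the main proof, since it relies only on the already-established formula~(\ref{equation:conductance}).
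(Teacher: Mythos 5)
Your proof is correct, but it takes a genuinely different route from the paper's. The paper handles this direction with a single separation argument: it defines the propagation set $P(y)$ of points reachable from $y$ by paths meeting $\Gamma'$ only at $y$, partitions the edges of $\Gamma$ into those touching $P(y)$ and those not, observes that every vertex outside $\Gamma'$ has all of its edges in just one class (by the defining property of $P(y)$), and hence splits the energy after partial minimization as $\E = \E_F + \E_G$, where $\E_F$ does not involve $u(x)$ and $\E_G$ does not involve $u(y)$; the mixed partial $\partial_{u(x)u(y)}\E$ then vanishes, which is exactly $c^{\Gamma'}_{x,y}=0$. You instead iterate the one-point elimination of Lemma \ref{lemma:subgraph}, using the non-negativity of the update in (\ref{equation:conductance}) to propagate a ``support'' invariant: any positive conductance in an intermediate reduced network is witnessed by a path whose interior lies in the already-eliminated set; the contrapositive at the last stage gives the lemma. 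What your route buys: it uses only (\ref{equation:conductance}), and your support invariant is essentially the same machinery the paper must build separately (Lemma \ref{lemma:gamma} and Lemma \ref{lemma: electrical path}) to prove the \emph{converse} half of Theorem \ref{zerocoefficients}, so your induction comes close to unifying both directions. What the paper's route buys: it is free of elimination bookkeeping, whereas your argument tacitly needs three small facts you should state -- that the reduced form on $\Gamma'$ is independent of the elimination order (it is, since it is a partial minimization, which can be performed one variable at a time in any order), that the denominators $\sum_i c_{i,p}$ never vanish (connectivity is preserved under elimination), and that the concatenation through $p$ in your inductive step is only a walk, from which a simple path with interior in $T$ must be extracted by loop-erasure. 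These are minor, fixable points of rigor rather than gaps.
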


\begin{proof}
Let $\Gamma' = \{x,y, x_1, x_2, x_3, \dots, x_n \}$. We define the propogation set of y as
\[
P(y): = \{g \in \Gamma~|~ \text{there is a path P from } y \text{ to } g \text{ with } P \cap \Gamma' = \{y\} ~\}~.
\]
Now we partition the edges as follows:
$$F = \{\{a,b\} ~|~ a \sim b \text{ with one of } a \text{ or } b \text{ in } P(y) \}$$
$$G = \{\{a,b\} ~|~ a \sim b \text{ with } a,b \notin P(y) \}~.$$
From the definition of $P(y)$ we immediately get the fact that given $e_F \in F, e_G \in G$ if $x \in e_F \cap e_G$ then $x \in \Gamma'$. Thus we have for each $z \in \Gamma \setminus \Gamma'$ the edges containing $z$ are contained either all in $F$ or all in $G$. Thus we get a partition of $\Gamma \setminus\Gamma'$ into
$$\Gamma'_F = \{x \in \Gamma \setminus\Gamma' | x \in z \text{ for some } z \in F \}$$
$$\Gamma'_G = \{x \in \Gamma \setminus\Gamma' | x \in z \text{ for some } z \in G \}.$$
Then we can express the energy as
\[
\E(u) =\left( \frac{5}{3} \right)^m \left( \sum_{F} (u(a) - u(b))^2 + \sum_{G} (u(m) - u(n))^2 \right)~.
\]

Therefore, since we have partition of $\Gamma \setminus \Gamma'$ we can minimize each sum individually to get functions $\E_F$ and $\E_G$ on $\Gamma'$ with
\[
\E(u) = \E_F + \E_G
\]
where $\E_F$ depends on $u(y)$, not on $u(x)$ and $\E_G$ depends on $u(x)$ not on $u(y)$.
Hence, $\partial_{u(x)u(y)} \E(u) = 0$. So, $c_{x,y}^{\Gamma'} = 0$.
\end{proof}

To prove the converse of this, we will use a reduction argument to deal with the situation where $\Gamma'$ differs from $\Gamma$ by one point. We establish the appropriate machinery to make this reduction:
\begin{Lemma} \label{lemma:gamma}
Given $\Gamma = \{x_1, x_2, \dots, x_n, x_{n+1} \}$ and $\Gamma' = \{ x_1, x_2, \dots, x_n\}$, suppose that $c_{1, n+1}^{\Gamma}> 0$ and $c_{n+1, 2}^{\Gamma}>0$.  Then, $c_{1, 2}^{\Gamma'} >0$.
\end{Lemma}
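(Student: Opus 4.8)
The plan is to apply the single-point reduction formula (\ref{equation:conductance}) from the proof of Lemma \ref{lemma:subgraph} directly. Since $\Gamma$ and $\Gamma'$ differ by exactly the one point $x_{n+1}$, that formula expresses the new conductance after eliminating $x_{n+1}$ as
\begin{equation*}
c^{\Gamma'}_{1,2} = c^{\Gamma}_{1,2} + \frac{c^{\Gamma}_{1,n+1}\, c^{\Gamma}_{2,n+1}}{\sum_{i=1}^{n} c^{\Gamma}_{i,n+1}},
\end{equation*}
namely the old direct conductance plus a correction term arising from the removed vertex. The whole argument then reduces to showing that the correction term is strictly positive.

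First I would record that all conductances are nonnegative. This follows by induction using (\ref{equation:conductance}) itself: on the full graph $V_m$ the coefficients equal $(5/3)^m$ on edges and $0$ otherwise, hence are nonnegative, and each application of the reduction formula adds a product of nonnegative quantities divided by a nonnegative sum, so nonnegativity is preserved as we pass to any $\Gamma' \subset V_m$. In particular $c^{\Gamma}_{1,2} \geq 0$ and every $c^{\Gamma}_{i,n+1} \geq 0$.

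Granting this, the conclusion is immediate. By hypothesis $c^{\Gamma}_{1,n+1} > 0$ and $c^{\Gamma}_{2,n+1} > 0$, so the numerator of the correction term is strictly positive. The denominator $\sum_{i=1}^{n} c^{\Gamma}_{i,n+1}$ is a sum of nonnegative terms that includes the strictly positive term $c^{\Gamma}_{1,n+1}$, hence is itself strictly positive. Therefore the correction term is strictly positive, and adding the nonnegative quantity $c^{\Gamma}_{1,2}$ keeps the total strictly positive, yielding $c^{\Gamma'}_{1,2} > 0$.

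The only content beyond quoting (\ref{equation:conductance}) is the nonnegativity of the conductances together with the positivity of the denominator. The positivity of the denominator is the one point worth checking, but it is guaranteed precisely because the sum contains $c^{\Gamma}_{1,n+1}$, which the hypothesis forces to be strictly positive; everything else is a direct substitution into the already-derived identity. I do not expect a genuine obstacle here, only the bookkeeping of verifying nonnegativity, which the recursion in Lemma \ref{lemma:subgraph} supplies.
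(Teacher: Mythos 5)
Your proposal is correct and matches the paper's proof exactly: both apply the single-point elimination formula (\ref{equation:conductance}) and read off strict positivity from the hypotheses. The paper states this in one line; your additional verification that all conductances are nonnegative and that the denominator is strictly positive (since it contains $c^{\Gamma}_{1,n+1}>0$) is sound bookkeeping that the paper leaves implicit.
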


\begin{proof}
This follows from (\ref{equation:conductance}), since
\[
c_{1, 2}^{\Gamma'} = c_{1, 2}^{\Gamma} + \frac{c_{1,n+1}^{\Gamma} c_{2,n+1}^{\Gamma}}{\sum_{j=1}^{n} c_{j,n+1}^{\Gamma}} >0~.
\]
\end{proof}
\begin{Lemma} \label{lemma: electrical path}
Consider an electrical path $x_{i_0} \rightarrow x_{i_1} \rightarrow x_{i_2} \rightarrow x_{i_3} \rightarrow \dots \rightarrow x_{i_n}$ and $p \in \Gamma \setminus \{x_{i_0}, x_{i_n}\}$. Denote $\Gamma'$ the restricted graph for $\Gamma \setminus \{p\}$. If $p$ is not on the path, then we have an electrical path of the same length and with the same points.  If $p=x_{i_j}$ is on the path, then we have shorter electrical path in $\Gamma'$, $x_{i_0} \rightarrow x_{i_1} \rightarrow x_{i_2}  \rightarrow \dots \rightarrow x_{i_{j-1}} \rightarrow x_{i_{j+1}} \rightarrow \dots \rightarrow x_{i_n}$. 
\end{Lemma}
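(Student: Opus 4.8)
The plan is to split into the two cases exactly as stated and to reduce each to the two prior lemmas on conductances: the monotonicity Lemma~\ref{lemma:subgraph} and the splicing Lemma~\ref{lemma:gamma}. The first observation, used in both cases, is that $\Gamma' = \Gamma \setminus \{p\} \subset \Gamma$, so Lemma~\ref{lemma:subgraph} applies to every pair of points of $\Gamma'$: passing to the smaller set can only increase conductances, $c^{\Gamma'}_{x,y} \geq c^{\Gamma}_{x,y}$. In particular, any edge that was electrical in $\Gamma$ and whose endpoints survive in $\Gamma'$ remains electrical.

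For the first case, when $p$ is none of the $x_{i_k}$, every consecutive pair $x_{i_k}, x_{i_{k+1}}$ of the path lies in $\Gamma'$, so Lemma~\ref{lemma:subgraph} gives $c^{\Gamma'}_{x_{i_k}, x_{i_{k+1}}} \geq c^{\Gamma}_{x_{i_k}, x_{i_{k+1}}} > 0$. Hence the very same sequence of points is still an electrical path in $\Gamma'$, which settles this case immediately.

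For the second case, write $p = x_{i_j}$; since $p \notin \{x_{i_0}, x_{i_n}\}$ we have $0 < j < n$, so both neighbors $x_{i_{j-1}}$ and $x_{i_{j+1}}$ exist. For every consecutive pair of the spliced sequence that does \emph{not} involve the new edge, i.e. pairs among $x_{i_0}, \dots, x_{i_{j-1}}$ or among $x_{i_{j+1}}, \dots, x_{i_n}$, positivity of the conductance in $\Gamma'$ again follows from Lemma~\ref{lemma:subgraph}. It remains only to verify the single new edge $x_{i_{j-1}} \to x_{i_{j+1}}$. Here I would invoke Lemma~\ref{lemma:gamma} with $p = x_{i_j}$ as the deleted point: since the original path gives $c^{\Gamma}_{x_{i_{j-1}}, x_{i_j}} > 0$ and $c^{\Gamma}_{x_{i_j}, x_{i_{j+1}}} > 0$, that lemma yields $c^{\Gamma'}_{x_{i_{j-1}}, x_{i_{j+1}}} > 0$. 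Thus all consecutive conductances along the spliced sequence are positive, producing the claimed shorter electrical path.

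The argument is essentially bookkeeping once the two lemmas are in hand; the only point requiring genuine care is the new edge $x_{i_{j-1}} \to x_{i_{j+1}}$, which is exactly where Lemma~\ref{lemma:gamma} (rather than mere monotonicity) is needed, since deleting an interior vertex of the path must not sever its two neighbors. One should also note that the path may be assumed simple, so that $p$ occupies a single position $i_j$; were a vertex allowed to repeat, one would splice out each occurrence in turn and apply the same reasoning at each step.
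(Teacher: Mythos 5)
Your proposal is correct and follows essentially the same route as the paper: monotonicity of conductances under restriction (Lemma~\ref{lemma:subgraph}) handles the case where $p$ is off the path and all surviving consecutive pairs, while Lemma~\ref{lemma:gamma} applied with $p = x_{i_j}$ as the deleted vertex supplies positivity of the conductance for the single spliced edge $x_{i_{j-1}} \to x_{i_{j+1}}$. The extra remark about repeated vertices is a harmless refinement not present in the paper's proof.
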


\begin{proof}
If $p$ is not in the path then we know from Lemma \ref{lemma:subgraph} that $c^{\Gamma'}_{i_j,i_{j+1}} \geq c^{\Gamma}_{i_j,i_{j+1}} > 0$ so  the same points form an electrical path. \\

If $p = x_{i_j}$ is on the path then it follows from the above observation that we only need to show $c^{\Gamma'}_{i_{j-1},i_{j+1}} > 0$. Since $c^{\Gamma}_{i_{j-1},i_{j}}, c^{\Gamma}_{i_j,i_{j+1}} > 0$ by Lemma \ref{lemma:gamma} we have $c^{\Gamma'}_{i_{j-1},i_{j+1}} > 0$. We therefore have a shorter path $x_{i_0} \rightarrow x_{i_1} \rightarrow x_{i_2}  \rightarrow \dots \rightarrow x_{i_{j-1}} \rightarrow x_{i_{j+1}} \rightarrow \dots \rightarrow x_{i_n}$.
\end{proof}

With this tool, we can easily make the reduction and use our previous calculation to show the converse:
\begin{Lemma}
Let $\Gamma' = \{x_1, x_2, \dots, x_n \}$ and $\Gamma  = \{x_1, x_2, \dots, x_n, x_{n+1}, x_{n+2}, \dots, x_{n+k} \}$.
Let $x,y$ be two distinct points in $\Gamma'$. Suppose there is a path from $x$ to $y$ such that the path does not intersect any  other points in $\Gamma'$. Then $c_{x,y}^{\Gamma'}>0$.
\end{Lemma}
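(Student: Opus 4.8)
The plan is to run the reduction argument that the preceding three lemmas were built to support: peel the ambient graph $\Gamma = V_m$ down to $\Gamma'$ one vertex at a time, while keeping an electrical path alive between $x$ and $y$ throughout.

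First I would upgrade the hypothesized graph path to an \emph{electrical} path in $\Gamma$. The path supplied by the hypothesis is a sequence of $\underset{m}{\sim}$-adjacent points in $V_m$, and by Corollary \ref{corollary: neighbors} any two $m$-neighbors satisfy $c^{V_m}_{a,b} \geq (\frac{5}{3})^m > 0$; hence consecutive vertices of the path have strictly positive conductance in $\Gamma$, so the path qualifies as an electrical path from $x$ to $y$ in the sense of the definition. Because the path meets $\Gamma'$ only at its endpoints, all of its interior vertices lie in $\Gamma \setminus \Gamma' = \{x_{n+1}, \dots, x_{n+k}\}$.

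Next I would delete the excess points one at a time, forming a chain $\Gamma = \Gamma_0 \supset \Gamma_1 \supset \dots \supset \Gamma_k = \Gamma'$ in which each $\Gamma_i$ is obtained from $\Gamma_{i-1}$ by removing one point of $\Gamma \setminus \Gamma'$. Since $x, y \in \Gamma'$, none of the removed points is an endpoint of the path, so Lemma \ref{lemma: electrical path} applies at every stage: it guarantees that an electrical path from $x$ to $y$ persists in each $\Gamma_i$, and that its vertex set is contained in that of the path in $\Gamma_{i-1}$ (the same vertices if the removed point is off the path, one fewer if it is on it, the positivity of the new conductance coming from Lemma \ref{lemma:gamma}). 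Inducting on $i$, the electrical path surviving in $\Gamma' = \Gamma_k$ uses only vertices of the original path. This surviving path lives in $\Gamma'$, so every vertex it uses belongs to $\Gamma'$; yet its vertices also form a subset of the original path's vertices, whose interior ones all lay in $\Gamma \setminus \Gamma'$. Therefore no interior vertex survives, the path has collapsed to the single edge $x \to y$, and by the definition of electrical path this forces $c^{\Gamma'}_{x,y} > 0$.

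The only real care needed is the bookkeeping: verifying that the vertex set of the electrical path can only shrink under deletion (it never acquires new points) and that the deleted points are never $x$ or $y$, so that Lemma \ref{lemma: electrical path} is always legitimately applicable. Both facts are immediate from the statements of Lemmas \ref{lemma: electrical path} and \ref{lemma:gamma}, so I expect no genuine difficulty beyond setting up the induction cleanly. Together with the converse direction proved earlier, this completes the proof of Theorem \ref{zerocoefficients}.
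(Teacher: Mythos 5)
Your proof is correct and essentially identical to the paper's: both upgrade the given path to an electrical path via Corollary \ref{corollary: neighbors} and then delete the points of $\Gamma \setminus \Gamma'$ one at a time, invoking Lemma \ref{lemma: electrical path} (and through it Lemma \ref{lemma:gamma}) to keep an electrical path from $x$ to $y$ alive at every stage. The only cosmetic difference is the endgame: you run the induction all the way down to $\Gamma'$ and observe that the surviving path must be the single edge $x \to y$, whereas the paper stops at $\Gamma^{(1)}$ (one extra point remaining) and finishes with Lemma \ref{lemma:gamma} together with the monotonicity $c^{\Gamma'}_{x,y} \ge c^{\Gamma^{(1)}}_{x,y}$ from Lemma \ref{lemma:subgraph}.
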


\begin{proof}
We prove here a stronger result, namely that this holds for all electrical paths satisfying our criteria. By Corollary \ref{corollary: neighbors}, we know every path is an electrical path, so it suffices to show this. \\

For notation we set $x=x_{i_0}$ and $y=x_{i_t}$. Let $x_{i_0} \rightarrow x_{i_1} \rightarrow x_{i_2} \rightarrow \dots \rightarrow x_{i_t}$ be a path connecting $x_{i_0}$ and $x_{i_t}$ where all $x_{i_j}$ are in $\Gamma \setminus \Gamma'$,where $1\le j \le t-1$. Define $\Gamma^{(j)}=\{x_1, x_2, \dots, x_n, x_{n+1},\dots,x_{n+j} \}$. Then  from Lemma \ref{lemma: electrical path}, we can inductively get an electrical path from $x$ to $y$ which has length less than or equal to $j+1$ for the restricted graph $\Gamma^{(j)}.$ \\

We have $c_{x,y}^{\Gamma'} \ge c_{x,y}^{\Gamma^{(1)}}$. As a result, we only need to concentrate on the case $\Gamma^{(1)}.$ This case follows directly from Lemma \ref{lemma:gamma}. 
\end{proof}
\subsubsection{Lower bounds}

\begin{Lemma}
Let $x_0\rightarrow x_1 \rightarrow x_2 \rightarrow \dots \rightarrow x_n \rightarrow x_{n+1} $ be a path in $V_m$ such that $x_i \neq x_j$ for all $i \neq j$ and $x_i \underset{m}{\sim} x_j$ if and only if $|i-j|=1$. Define 
$V_m^0 = V_m$ and
$V_m^i = V_m \setminus \{x_1, \dots ,x_i\}~.$
We use the notation $c^i_{z_1,z_2} = c^{V_m^i}_{z_1,z_2}~$, and let
$$N_i= \# \{y\in V_m^{i-1}~|~c_{y,x_i}^{i-1}>0\}~,$$

$$M_i=\max \{c_{y,x_i}^{{i-1}}~|~ y\in V_m^{i-1}, c_{y,x_i}^{i-1} >0\},$$

$$m_i=\min \{c_{y,x_i}^{{i-1}}~|~ y\in V_m^{i-1}, c_{y,x_i}^{{i-1}} >0\}~.$$
Then we obtain the relationships
$$1 \le N_i \le i+3,$$ $$\left(\frac{5}{3}\right)^m\frac{m_i}{N_i M_i}\le m_{i+1},$$  $$4\left(\frac{5}{3}\right)^m\left(1+\frac{M_i}{N_i m_i}\right) \ge M_{i+1}.$$

\end{Lemma}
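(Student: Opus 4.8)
The plan is to derive all three relations from a single mechanism: tracking how the conductances incident to $x_{i+1}$ change when the one vertex $x_i$ is deleted from the network $V_m^{i-1}$ to produce $V_m^i$. The three main ingredients are the one-point deletion formula (\ref{equation:conductance}), which in the present notation reads
$$c^i_{y,x_{i+1}} = c^{i-1}_{y,x_{i+1}} + \frac{c^{i-1}_{y,x_i}\,c^{i-1}_{x_i,x_{i+1}}}{S_i}, \qquad S_i := \sum_{z \in V_m^{i-1},\, z \ne x_i} c^{i-1}_{z,x_i},$$
the two-sided estimate for neighboring junction points from Corollary \ref{corollary: neighbors}, and the monotonicity statement of Lemma \ref{lemma:subgraph}. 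I note at the outset that, by the definitions of $N_i$, $m_i$, $M_i$, the normalizing sum obeys $N_i m_i \le S_i \le N_i M_i$, and that $x_{i+1}$ is always an electrical neighbor of $x_i$ with $c^{i-1}_{x_i,x_{i+1}} \in [(5/3)^m, 4(5/3)^m]$ by Corollary \ref{corollary: neighbors}; in particular $N_i \ge 1$, giving the lower half of the first relation.

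For the upper bound $N_i \le i+3$ I would use Theorem \ref{zerocoefficients} to identify the electrical neighbors of $x_i$ in $V_m^{i-1}$ as exactly those $y$ reachable from $x_i$ by a path in $V_m$ whose interior vertices all lie in the deleted set $R = \{x_1,\dots,x_{i-1}\}$. Because the hypothesis $x_a \underset{m}{\sim} x_b \iff |a-b|=1$ makes $R$ an induced sub-path attached to $x_i$ only at $x_{i-1}$, any such path must run backwards along $R$, namely $x_i \to x_{i-1} \to \dots \to x_{i-s}$, and then exit to a vertex outside $R$. Using the cell structure of $SG$, each path vertex $x_j$ lies in exactly the two cells carrying its two path-edges, so its only off-path neighbors are the two apexes $w_{j-1}, w_j$ of those cells, and these apexes are shared between consecutive path vertices. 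Consequently the set of possible exit points is contained in $\{x_0, x_{i+1}\} \cup \{w_0, w_1, \dots, w_i\}$, which has at most $i+3$ elements; coincidences or corner vertices of lower degree only decrease the count.

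For the estimates on $m_{i+1}$ and $M_{i+1}$ the decisive geometric observation is that $x_{i+1}$ has no neighbor inside $R$ (its $V_m$-neighbors are $x_i$, $x_{i+2}$ and two apexes, none of which can carry an index $\le i-1$ without violating the induced-path hypothesis). Hence, by Theorem \ref{zerocoefficients}, the electrical neighbors of $x_{i+1}$ in $V_m^{i-1}$ are all genuine $m$-neighbors, so every positive $c^{i-1}_{y,x_{i+1}}$ lies in $[(5/3)^m, 4(5/3)^m]$ by Corollary \ref{corollary: neighbors}. Feeding this into the deletion formula, for the lower bound I split the positive $c^i_{y,x_{i+1}}$ into those inherited from an old neighbor, which are already $\ge (5/3)^m \ge (5/3)^m m_i/(N_iM_i)$ since $N_iM_i \ge M_i \ge m_i$, and those created through $x_i$, for which the new term is $\ge m_i (5/3)^m/(N_iM_i)$ using $c^{i-1}_{y,x_i}\ge m_i$, $c^{i-1}_{x_i,x_{i+1}} \ge (5/3)^m$ and $S_i \le N_iM_i$; this yields $m_{i+1} \ge (5/3)^m m_i/(N_iM_i)$. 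For the upper bound I bound the inherited term by $4(5/3)^m$ and the transferred term by $4(5/3)^m M_i/(N_i m_i)$ using $c^{i-1}_{y,x_i}\le M_i$, $c^{i-1}_{x_i,x_{i+1}} \le 4(5/3)^m$ and $S_i \ge N_i m_i$, giving $M_{i+1} \le 4(5/3)^m(1 + M_i/(N_i m_i))$.

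The main obstacle is the combinatorial bookkeeping behind $N_i \le i+3$: one must argue precisely which vertices become electrically linked to $x_i$ after deleting the path segment $R$, and the \emph{linear} (rather than exponential) growth of this set is exactly what the induced-path hypothesis and the two-cells-per-vertex structure of $SG$ buy us. The same non-adjacency observation for $x_{i+1}$ is what confines the "old" conductances to $[(5/3)^m, 4(5/3)^m]$ and makes the one-step deletion formula translate directly into the stated recursion; without it the inherited term $c^{i-1}_{y,x_{i+1}}$ could be an uncontrolled long-range conductance and the clean inequalities would break.
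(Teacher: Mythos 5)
Your proposal is correct. For the two conductance recursions it coincides with the paper's own argument: both rest on the observation that no point of $\{x_1,\dots,x_{i-1}\}$ is an $m$-neighbor of $x_{i+1}$ (by the induced-path hypothesis), so that by Theorem \ref{zerocoefficients} every electrical neighbor of $x_{i+1}$ in $V_m^{i-1}$ is a genuine $m$-neighbor, whence the inherited conductances lie in $[(5/3)^m,4(5/3)^m]$ by Corollary \ref{corollary: neighbors}; feeding this, together with $N_im_i\le S_i\le N_iM_i$ and $(5/3)^m\le c^{i-1}_{x_i,x_{i+1}}\le 4(5/3)^m$, into the deletion formula (\ref{equation:conductance}) reproduces exactly the paper's two-sided estimate, and your case split (inherited versus created neighbors, with $N_iM_i\ge m_i$ handling the inherited case of the lower bound) is the correct reading of what the paper leaves implicit there. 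Where you genuinely depart from the paper is the bound $N_i\le i+3$: the paper argues by induction, asserting $N_{i+1}\le N_i+1$ ``by Lemma \ref{lemma:gamma} and Corollary \ref{corollary: neighbors}'', whereas you count exit points directly, confining the electrical neighbors of $x_i$ in $V_m^{i-1}$ to the two endpoints $x_0,x_{i+1}$ and the $i+1$ cell apexes along the deleted segment. Your route is arguably more self-contained: the paper's inductive step is not automatic, since the deletion formula only shows that the electrical neighbors of $x_{i+1}$ in $V_m^i$ lie in $A\cup B$, where $A$ is the set of old electrical neighbors of $x_{i+1}$ minus $x_i$ (at most $3$ by the neighbor claim) and $B$ is the set of old electrical neighbors of $x_i$ minus $x_{i+1}$ (exactly $N_i-1$ points), which naively gives only $N_{i+1}\le N_i+2$; to recover $N_i+1$ one must additionally observe that the apex of the cell containing the edge $\{x_i,x_{i+1}\}$ belongs to both $A$ and $B$. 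Your direct count sidesteps this unstated overlap argument and yields the same constant ($2$ endpoints plus $i+1$ apexes $=i+3$), with coincidences and lower-degree boundary vertices only decreasing the count.
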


\begin{proof}
Let $y$ be a point such that $c_{y,x_i}^{{i-1}} > 0$.\\

We claim that either $c_{y,x_{i+1}}^{{i-1}} = 0$ or $y \underset{m}{\sim} x_{i+1}$. To see this suppose $c_{y,x_{i+1}}^{{i-1}} > 0$ and they are not neighbors. Then we know we have a path from $y$ to $x_{i+1}$ where the interior of the path is contained in $\{x_1, \dots, x_{i-1}\}$ by Theorem \ref{zerocoefficients}. Now we know the second point in the path and $y$ are neighbors, implying that we have $x_j \underset{m}{\sim} x_{i+1}$ for some $j\leq i-1$, contradicting our choice of path. Thus either they are neighbors or $c_{y,x_{i+1}}^{{i-1}} = 0$.\\

From (\ref{equation:conductance}) we have:

\begin{align*}
c_{y,x_{i+1}}^{{i}}=c_{y,x_{i+1}}^{{i-1}}+ \frac{c_{y,x_{i}}^{{i-1}}c_{x_{i+1},x_{i}}^{{i-1}}}{{\sum_{z\in V_m^i}c_{x_{i},z}^{{i-1}}}}~.\end{align*}

It follows that
\begin{align*}
4\left( \frac{5}{3} \right)^m + \frac{4\left( \frac{5}{3} \right)^m M_i}{N_i m_i} \geq c_{y,x_{i+1}}^{i} \geq \frac{\left( \frac{5}{3} \right)^m m_i}{N_i M_i}.
\end{align*}

So we have $$\left(\frac{5}{3}\right)^m\frac{m_i}{N_i M_i}\le m_{i+1}$$ and $$4 \left(\frac{5}{3}\right)^m\left(1+\frac{M_i}{N_i m_i}\right) \ge M_{i+1}~.$$

To show $1 \le N_i \le i+3$, note that we obviously have $1 \le N_i$ by Corollary \ref{corollary: neighbors}. By Lemma \ref{lemma:gamma} and Corollary \ref{corollary: neighbors} we obtain $N_{i+1} \le N_i +1$. By induction, it follows that $N_i \leq i+3$. Thus we get the stated bounds.

\end{proof}

\begin{Theorem}
There is a sequence $a_N$ such that for arbitrary $m\in \mathbb{N}$ and arbitrary subset $A\subset V_m$, given any two points $x,y \in A$ with $d(x,y) = N$ we have 
$$a_N(\frac{5}{3})^m \le c_{x,y}^A  $$ where $d(x,y)$ denotes the length of shortest path in $V_m$ from $x$ to $y$ without intersecting any other points in $A$.
\end{Theorem}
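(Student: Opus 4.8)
The plan is to run the elimination process of the preceding lemma along a shortest path and track the conductance between the two endpoints, turning the pointwise estimates on $m_i$, $M_i$, $N_i$ into a single multiplicative lower bound that depends only on $N$.

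First I would fix a shortest path $x=x_0\to x_1\to\cdots\to x_N=y$ in $V_m$ realizing $d(x,y)=N$ and avoiding $A\setminus\{x,y\}$. Minimality forces $x_i\underset{m}{\sim}x_j$ if and only if $|i-j|=1$ (any chord would shorten the path), so this is precisely the kind of geodesic to which the preceding lemma applies, and its interior vertices $x_1,\dots,x_{N-1}$ lie outside $A$. Writing $V_m^i=V_m\setminus\{x_1,\dots,x_i\}$ as in that lemma, we then have $A\subseteq V_m^{N-1}\subseteq V_m$, so Lemma \ref{lemma:subgraph} gives $c^A_{x,y}\ge c^{V_m^{N-1}}_{x,y}$. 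It therefore suffices to bound $c^{V_m^{N-1}}_{x_0,x_N}$ from below.

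Next I would track $\gamma_i:=c^{i-1}_{x_0,x_i}$, the conductance from the fixed endpoint $x_0$ to the vertex about to be eliminated. Applying the conductance formula (\ref{equation:conductance}) at the step where $x_i$ is removed (its positive-conductance neighbors include $x_{i+1}$) and discarding the nonnegative term $c^{i-1}_{x_0,x_{i+1}}$ gives
$$\gamma_{i+1}=c^i_{x_0,x_{i+1}}\ \ge\ \frac{\gamma_i\,c^{i-1}_{x_{i+1},x_i}}{\sum_{z}c^{i-1}_{x_i,z}}\ \ge\ \frac{\gamma_i\,m_i}{N_iM_i},$$
since the denominator has at most $N_i$ positive terms each at most $M_i$, and $c^{i-1}_{x_{i+1},x_i}\ge m_i$. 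The base case is $\gamma_1=c^{V_m}_{x_0,x_1}\ge(\tfrac53)^m$ by Corollary \ref{corollary: neighbors}, and this inequality also propagates positivity of the $\gamma_i$. Telescoping yields $c^{V_m^{N-1}}_{x_0,x_N}=\gamma_N\ge(\tfrac53)^m\prod_{i=1}^{N-1}\tfrac{m_i}{N_iM_i}=(\tfrac53)^m\prod_{i=1}^{N-1}\tfrac{1}{N_ir_i}$, where $r_i:=M_i/m_i\ge1$.

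The remaining, and main, point is to control the ratios $r_i$ by a quantity independent of $m$ and of $A$. Here the two recursions of the preceding lemma combine: dividing the lower bound for $m_{i+1}$ into the upper bound for $M_{i+1}$ cancels the factor $(\tfrac53)^m$ and yields $r_{i+1}\le 4N_ir_i+4r_i^2$, with $r_1\le 4$ (again Corollary \ref{corollary: neighbors}) and $N_i\le i+3$. Hence $r_i\le R_i$ for the explicit, $m$-free sequence $R_1=4$, $R_{i+1}=4(i+3)R_i+4R_i^2$. Setting $a_N:=\prod_{i=1}^{N-1}\big((i+3)R_i\big)^{-1}$, a positive constant depending only on $N$, we conclude $c^A_{x,y}\ge c^{V_m^{N-1}}_{x,y}\ge a_N(\tfrac53)^m$. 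The essential difficulty is exactly this scale-invariant control of the condition-number-type ratio $M_i/m_i$: the individual conductances all blow up like $(\tfrac53)^m$, and it is only because those factors cancel in the recursion for $r_{i+1}$ that one extracts a bound $a_N$ uniform in $m$.
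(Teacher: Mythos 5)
Your proof is correct and is essentially the paper's own argument: both run the elimination process of the preceding lemma along the chord-free shortest path, convert its three recursive estimates into an $m$-independent recursive sequence (the $(\tfrac{5}{3})^m$ factors cancel), and finish with Lemma \ref{lemma:subgraph} since $A \subseteq V_m \setminus \{x_1,\dots,x_{N-1}\}$. The only difference is bookkeeping: the paper tracks normalized two-sided bounds $a_i(\tfrac{5}{3})^m \le m_i$ and $M_i \le b_i(\tfrac{5}{3})^m$ and concludes via $c^{N-1}_{x,y} \ge m_N$, whereas you track $\gamma_i = c^{i-1}_{x_0,x_i}$ and the ratio $r_i = M_i/m_i$, which also yields positivity of $c^{N-1}_{x_0,x_N}$ directly rather than through the electrical-path lemma.
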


\begin{proof}
Let us define two sequences $\{a_n\},\{b_n\}$ with $a_1=1$ and $b_1=1$. The two sequences satisfy the recursive relations $$a_{i+1}=\frac{a_i}{(i+3)b_i}$$ and $$b_{i+1}=4\left(1+\frac{b_i}{a_i}\right)~.$$
For two given points $x,y\in A \subset V_m$, let $x\rightarrow x_1 \rightarrow x_2 \rightarrow \dots \rightarrow x_{N-1} \rightarrow y$ be a shortest path connecting them without intersecting any other points in $A$. Then this path satisfies all the conditions of the previous lemma. It is easy see that $a_i\left(\frac{5}{3}\right)^m \le m_i$ and $M_i \le \left(\frac{5}{3}\right)^m b_i$. This follows from the previous lemma and the recursive definition of $a_n$ and $b_n$.\\

So we have $a_N  (\frac{5}{3})^m \le c_{x,y}^{N-1}\le c_{x,y}^A$ as desired.
\end{proof}

\subsection{Specific Sets}
Now we turn to specific sets of interest in $V_m$.
\subsubsection{2-Set}
We define the following set as follows: $$\beta_0 = \{q_0, q_1, q_2, p_2 = F_2 F_1 q_0,p_1 =  F_1 F_0 q_2,p_0=  F_0 F_2 q_1 \},$$
$$\beta_m = \bigcup_{i}{F_i \beta_{m-1}}.$$

 \begin{figure}[ht]
\hspace{30 pt}
 \includegraphics[scale=0.5]{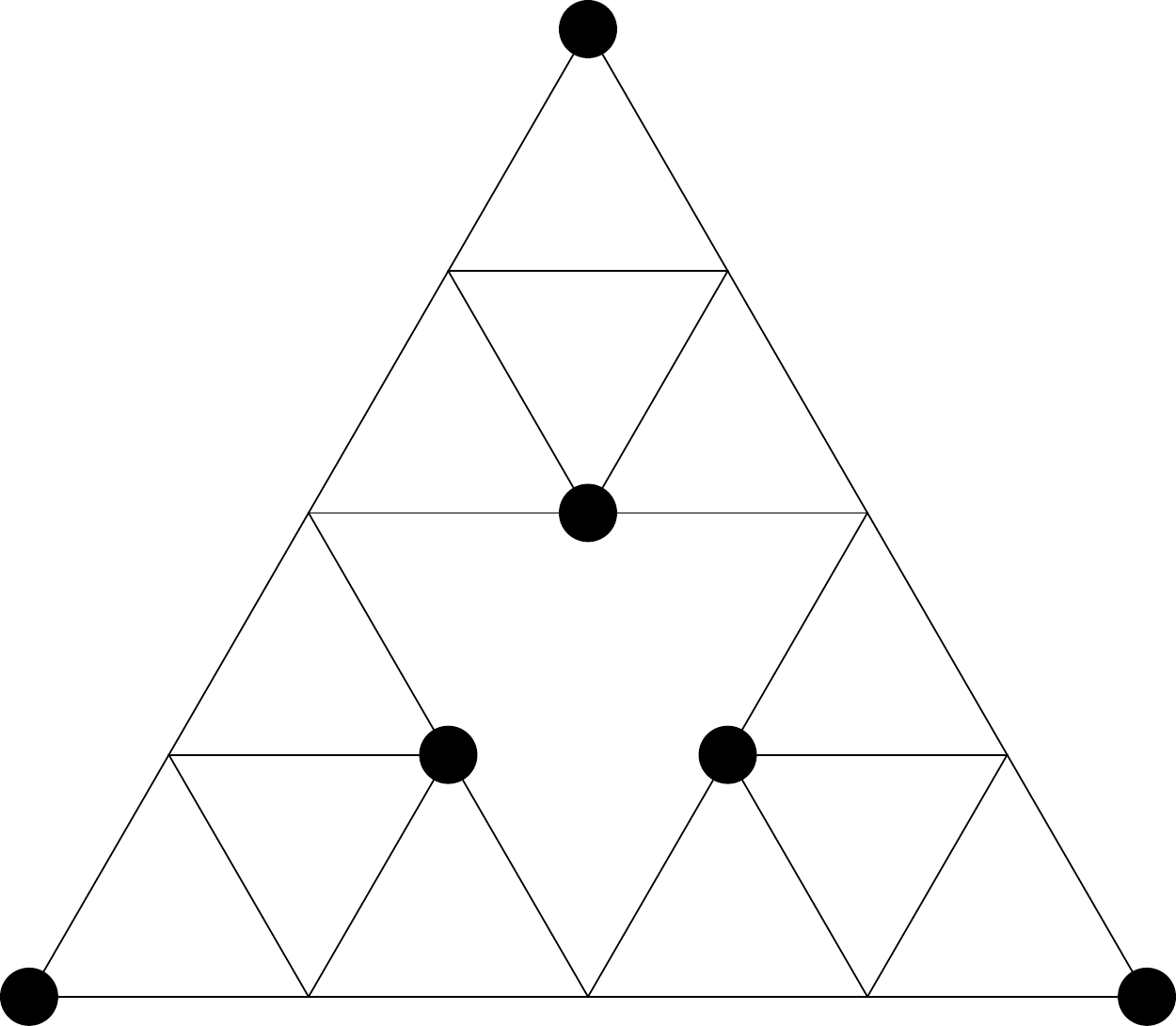}\hspace{30 pt} 
 \includegraphics[scale=0.5]{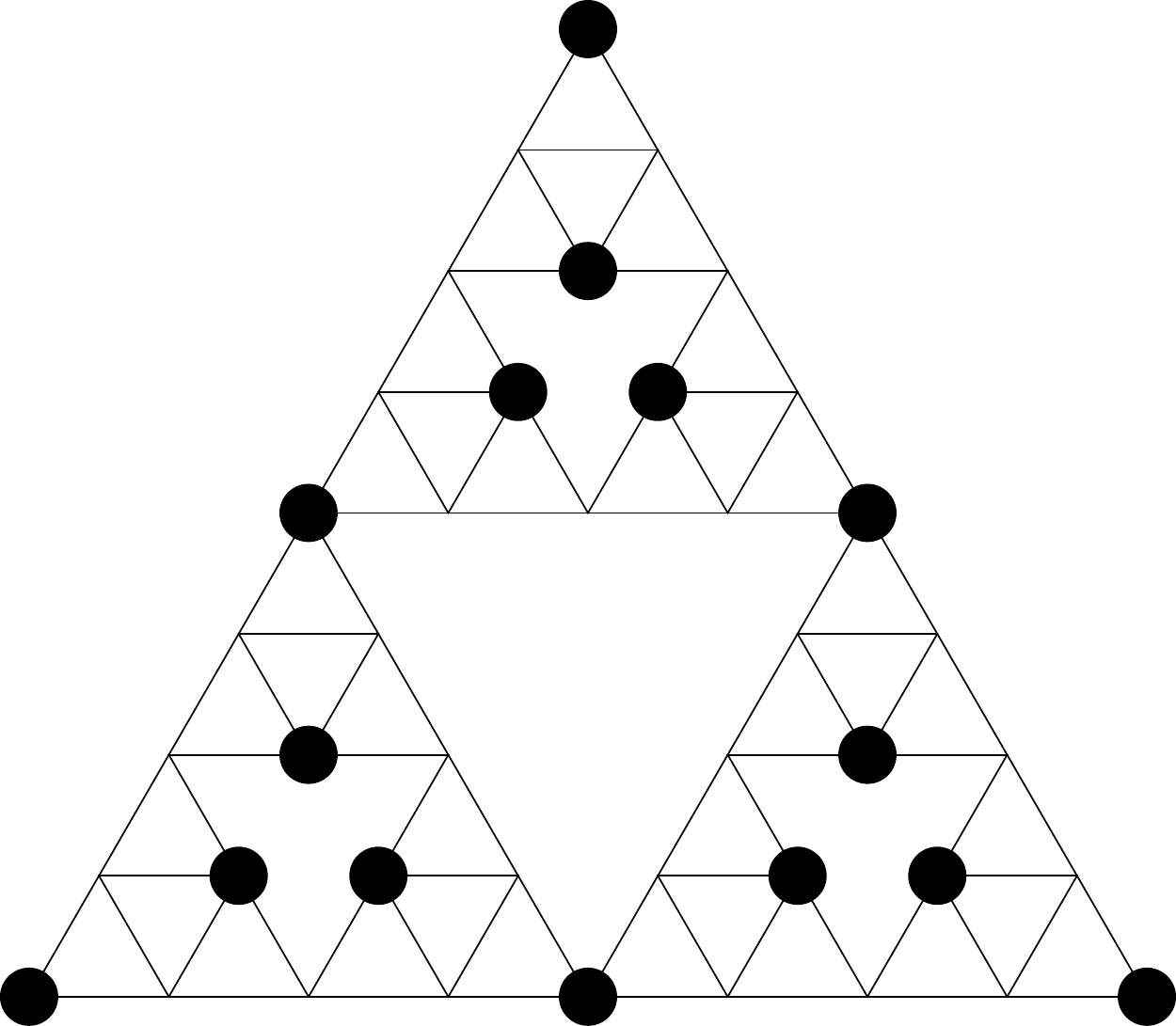} 
 \caption{$\beta_0$ and $\beta_1$}
 \label{figure:2set}
\end{figure}
A direct computation yields the following.
\begin{Lemma}
For $\beta_0$, the coefficients for $i \neq j$ are
 $$c_{q_i,p_i}=410/159~,$$ $$c_{q_i,q_{j}}=5/53~,$$ $$c_{q_i, p_{j}} = 20/53,$$ $$c_{p_i,p_{j}}=80/53~.$$

\end{Lemma}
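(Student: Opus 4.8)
The plan is to identify the minimizing quadratic form with the trace (Schur complement) of the level-$2$ energy network onto $\beta_0$, and to cut down the bookkeeping using the dihedral symmetry of $\beta_0$. First I would observe that $\beta_0 \subset V_2$: the corners $q_i$ lie in $V_0$, while each $p_i$ is a level-$2$ junction point; for instance $p_0 = F_0 F_2 q_1 = F_0 F_1 q_2$ is the image under $F_0$ of the midpoint of the edge $q_1 q_2$. Since the energy minimizer $u$ with data prescribed on $\beta_0$ is harmonic on $SG \setminus \beta_0$, it is harmonic inside every level-$2$ cell, and therefore $\E(u)$ equals the minimum of the graph energy $\E_2(w) = (5/3)^2 \sum_{x \underset{2}{\sim} y}(w(x)-w(y))^2$ over all $w$ on $V_2$ agreeing with the data on $\beta_0$. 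Computing the four coefficients thus reduces to eliminating the nine vertices of $V_2 \setminus \beta_0$ from this $15$-node resistor network.

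Before computing I would record the symmetry. The set $\beta_0$ is invariant under the full dihedral group $D_3$: the order-$3$ rotation sends $q_i \mapsto q_{i+1}$ and $p_i \mapsto p_{i+1}$, while the reflection fixing $q_0$ fixes the midpoint of $q_1 q_2$ and hence fixes $p_0$, and swaps $q_1 \leftrightarrow q_2$ and $p_1 \leftrightarrow p_2$. The ambient $V_2$ network and the node-elimination procedure are both $D_3$-equivariant, so the resulting form can have only four distinct coefficients: $c_{q_i,p_i}$, $c_{q_i,q_j}$, $c_{q_i,p_j}$ with $i \ne j$, and $c_{p_i,p_j}$, exactly the four quantities in the statement. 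This collapses the problem to tracking one representative of each orbit through the elimination.

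The elimination itself I would perform by repeated application of the node-deletion formula (\ref{equation:conductance}): when a vertex $z$ is removed, each pair $\{a,b\}$ of its surviving neighbors gains the term $c_{a,z}c_{b,z}/\sum_w c_{w,z}$. Starting from the uniform conductance $(5/3)^2$ on all $V_2$-edges (with the neighbor values of Corollary \ref{corollary: neighbors}), I would delete the nine free vertices one $D_3$-orbit at a time so that the symmetry is preserved at each stage and only a few conductances ever need to be recorded; collecting the surviving conductances and reading off the four orbit representatives produces the asserted fractions, the denominators $53$ and $159 = 3 \cdot 53$ arising from the accumulated rational arithmetic. The main obstacle is precisely this bookkeeping: after the first deletions the reduced network develops new edges between non-adjacent points of $\beta_0$, so one must carefully track these accumulating conductances and keep the fractions exact through all nine steps. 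As a safeguard I would verify the final form independently, by evaluating $\E(u)$ for a few explicit functions that are harmonic off $\beta_0$ (built from the standard $\tfrac{1}{5}$ harmonic extension rule) with simple prescribed data on $\beta_0$ and matching the values against $\sum_{\{x,y\}\subset\beta_0} c_{x,y}(u(x)-u(y))^2$.
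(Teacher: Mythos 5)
Your proposal is correct and takes essentially the same approach as the paper: the paper's entire proof is the sentence ``A direct computation yields the following,'' and its proof of the very next lemma (for $\beta_m$) confirms that this computation is the electrical-network reduction you describe. Your setup is sound --- since $\beta_0 \subset V_2$, the minimizer's energy is the trace of the uniform-conductance $(5/3)^2$ network on $V_2$ onto $\beta_0$, and the $D_3$-symmetry, the star-mesh elimination of the nine free nodes, and your harmonic-function cross-check correctly organize the bookkeeping that the paper leaves implicit.
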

\begin{Lemma}
We consider level $m>0$ and we let $\tilde{c}$ be the coefficients established for $\beta_0$. \\
If  $x$ and $y$ are in different $m$ cells, we have 
$$c_{x,y} = 0 ~.$$
If $x=F_w(\tilde{x})$ and $y=F_w(\tilde{y})$ for some $w$ such that $|w|=m$, then
$$c_{x,y}=(\frac{5}{3})^{m} \tilde{c}_{\tilde{x},\tilde{y}}~.$$
\end{Lemma}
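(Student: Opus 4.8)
The plan is to reduce the statement to the scaling behaviour of the self-similar energy, using as the single geometric input the fact that every junction point lies in $\beta_m$. First I would unfold the recursion $\beta_m=\bigcup_i F_i\beta_{m-1}$ into the closed form
\[
\beta_m=\bigcup_{|w|=m}F_w\beta_0 .
\]
Since $q_0,q_1,q_2\in\beta_0$, each word $w$ with $|w|=m$ contributes the three corners $F_wq_0,F_wq_1,F_wq_2$ of the $m$-cell $F_w(SG)$, so in particular every point at which two distinct $m$-cells meet belongs to $\beta_m$. This is the fact that makes the minimization decouple across cells.

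Next I would invoke the self-similarity of the energy, $\E(u)=\left(\frac53\right)^m\sum_{|w|=m}\E(u\circ F_w)$, valid for every $u\in\dom\E$. Fix data $a:\beta_m\to\R$. For any $u$ with $u|_{\beta_m}=a$, the function $u\circ F_w$ has its values on $\beta_0$ prescribed by $a$ (namely $a\circ F_w$), so each summand is at least the $\beta_0$-minimized energy for that data, which gives a lower bound. For the matching upper bound I would take, on each $m$-cell, the $\beta_0$-minimizer $v_w$ and glue the functions $v_w\circ F_w^{-1}$ together. Because distinct $m$-cells overlap only at junction points, and those points lie in $\beta_m$ so that the pieces already agree there with the common prescribed value, the glued function is continuous, has finite energy, and hence lies in $\dom\E$; it realizes the lower bound. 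Therefore
\[
\min_{u|_{\beta_m}=a}\E(u)=\left(\frac53\right)^m\sum_{|w|=m}\ \sum_{\{\tilde x,\tilde y\}\subset\beta_0}\tilde c_{\tilde x,\tilde y}\bigl(a(F_w\tilde x)-a(F_w\tilde y)\bigr)^2 .
\]

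Finally I would read off the coefficients. Comparing the displayed expression with $\E(u)=\sum_{\{x,y\}\subset\beta_m}c_{x,y}(u(x)-u(y))^2$, the right-hand side contains only differences between points of a common $m$-cell; since two distinct $m$-cells share at most one point, every pair $\{x,y\}$ of distinct points lies in at most one cell, so the representation has no cross-cell difference terms. The coefficients in a difference-form quadratic form are unique, being recovered as $c_{x,y}=-\frac12\partial_{u(x)u(y)}\E$, so I may identify them directly: for $x=F_w\tilde x$, $y=F_w\tilde y$ in the same cell one gets $c_{x,y}=(5/3)^m\tilde c_{\tilde x,\tilde y}$, while for $x,y$ in different cells $c_{x,y}=0$. (The latter could alternatively be obtained from Theorem \ref{zerocoefficients}, since any path joining two different $m$-cells must pass through a junction point, and that point lies in $\beta_m$.) The step I expect to be the real obstacle is the decoupling/gluing argument: one must verify that independently minimizing on each cell and patching the pieces yields a genuine element of $\dom\E$ that attains the global minimum, which rests precisely on the fact that the only interaction between cells occurs at junction points already constrained by the data.
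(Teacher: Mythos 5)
Your proof is correct and takes essentially the same approach as the paper: the paper likewise handles the cross-cell pairs via Theorem \ref{zerocoefficients} (every path between different $m$-cells must pass through a corner point, and $V_m \subset \beta_m$), and its brief ``glue the networks $F_w(SG)$ and multiply the conductances in $\beta_0$ by $(5/3)^m$'' argument is exactly the cell-by-cell decoupling, energy-scaling, and gluing argument you spell out. Your write-up just makes rigorous the variational content (lower bound by restriction, upper bound by gluing the cell minimizers, uniqueness of the difference-form coefficients) that the paper leaves implicit in the electrical-network language.
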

\begin{proof}
When $x$ and $y$ are in different $m$-cells it is easy to see by Theorem \ref{zerocoefficients} that $c_{x,y} =  0$. For $m>0,$ $V_m \subset \beta_m$, we can use the electrical network model of conductances. We glue the network $F_w(SG)$ with the same graph and conductance and multiply the conductances in $\beta_0$ by $(5/3)^m$. Therefore, we obtain the stated conductances for higher levels.
\end{proof}

\subsubsection{New Level}

Here we consider $V_n \setminus V_{n-1}$. Let $\Gamma_n$ represent the graph representation of $V_n$ and let $d(\cdot, \cdot)$ denote the graph distance.  We denote by $c_{x,y}^n$ the conductance between $x$ and $y$ as elements in $\Gamma_n$. \\

 \begin{figure}[ht]
 
 \hspace{30 pt}
\raisebox{3ex}{ \includegraphics[scale=0.5]{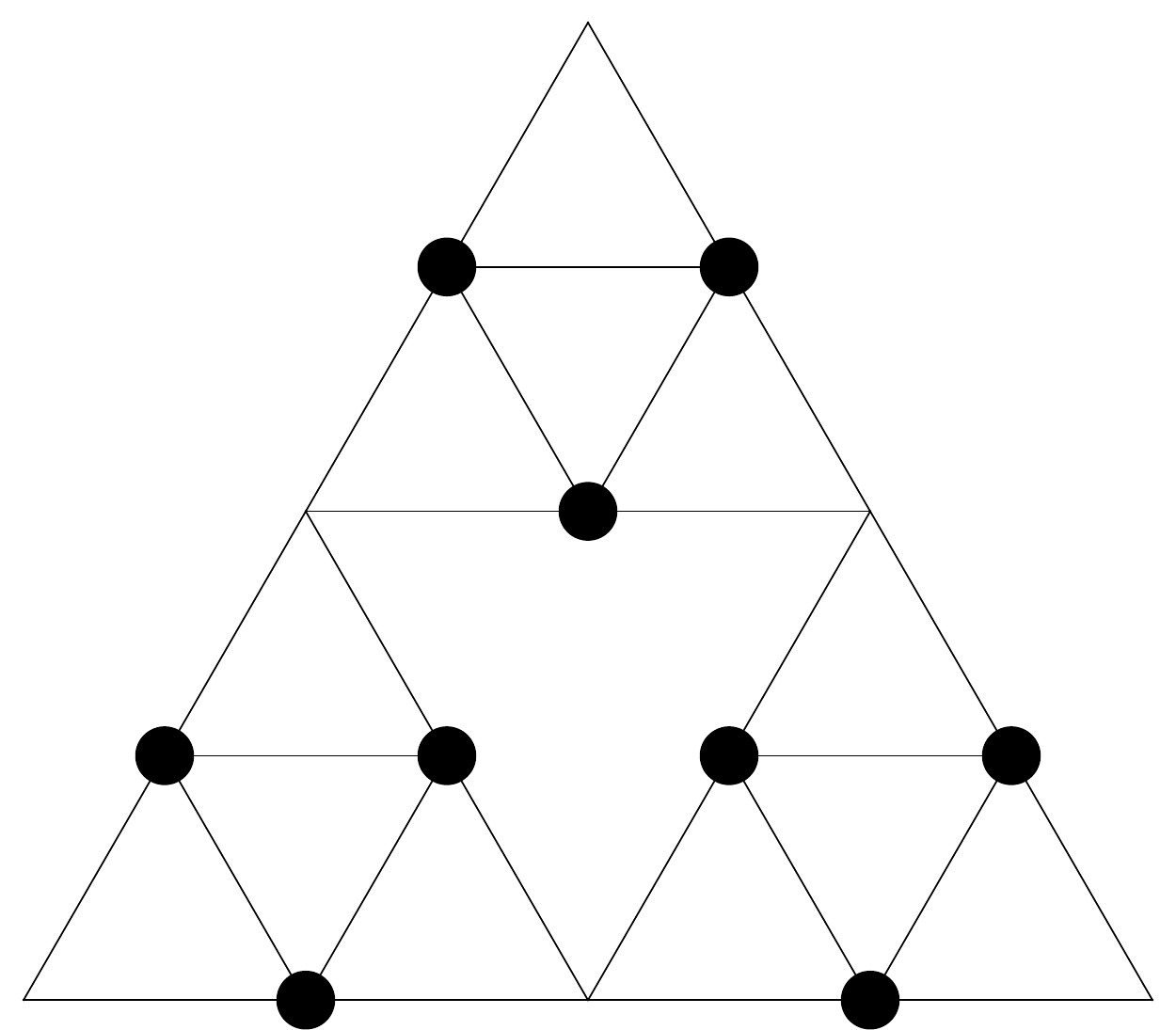} }\hspace{30 pt}
 \includegraphics[scale=0.45]{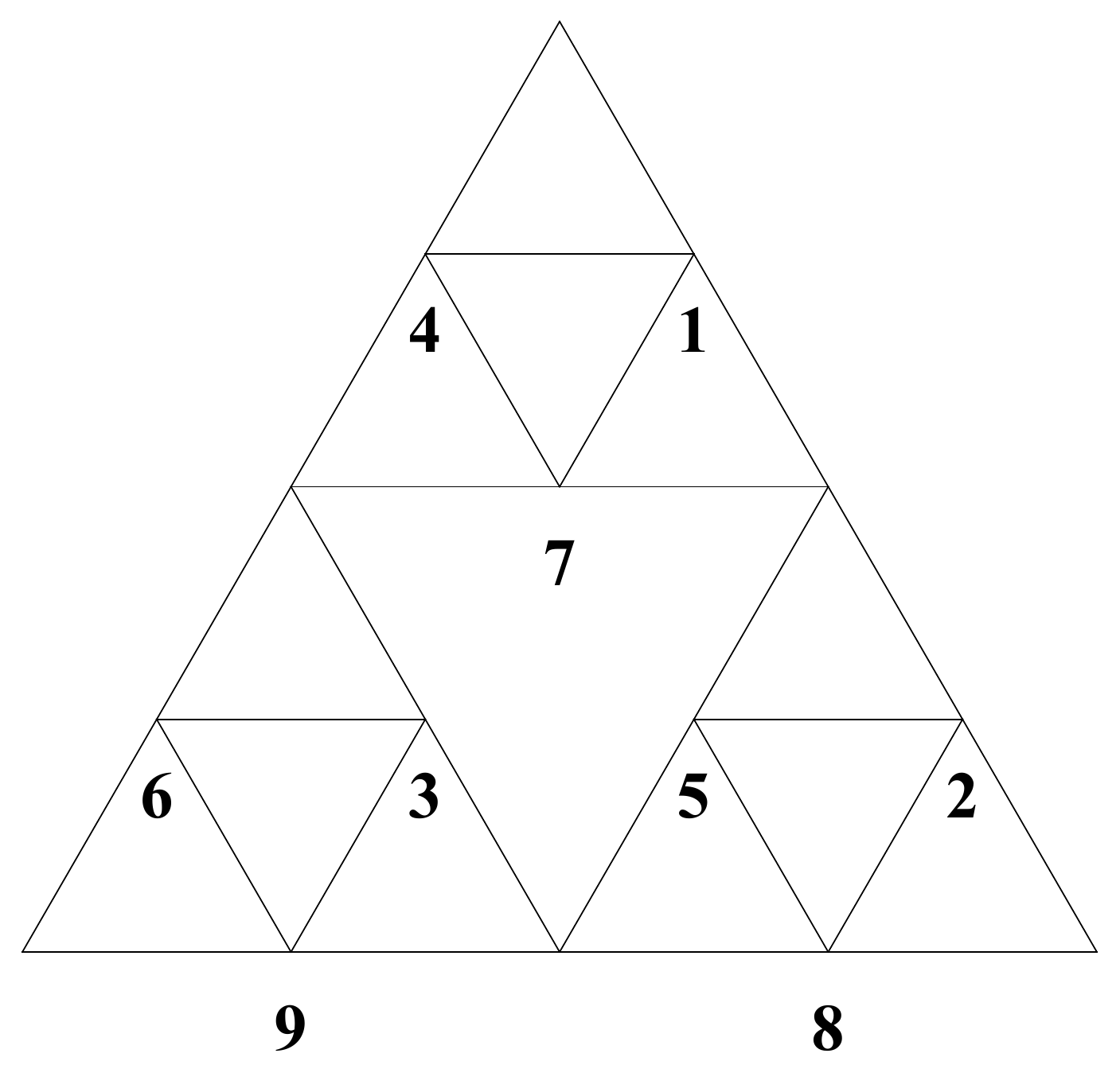} 
 
\caption{The New Level Set on Level 2 with an arbitrary ordering used to describe the coefficients}
\label{figure:newlevel}
\end{figure}

For each $x_i=\frac{q_{i-1}+q_{i+1}}{2}=F_{i-1}(q_{i+1}) \in V_1 \setminus V_0$ let $D^i_n$ denote the set of its four neighbors lying in $V_n$.

In this situation we cannot immediately extend from the first level. In fact we need the first 2 levels before we can use induction. We obtain the coefficients for the first 2 levels through direct computation and then extend.

For the first level, when we extend the values to $V_1 \setminus V_0$, all conductances are equal to $5/2$.

For the second level, as seen in Figure \ref{figure:newlevel}, from a simple calculation we have the following 3 types:

\begin{enumerate}
\item $c^2_{1,4} = c^2_{2,8} = c^2_{6,9} = 25/6$.
\item $c^2_{4,7} = c^2_{1,7} = c^2_{3,9} = c^2_{3,6} = c^2_{5,8} = c^2_{5,2} = 125/36$.
\item $c^2_{3,4}= c^2_{3,7}=c^2_{3,5}=c^2_{3,8}=c^2_{5,7}=c^2_{5,1}=c^2_{5,9}=c^2_{7,6}=c^2_{7,2}=c^2_{4,6}=c^2_{1,2}=c^2_{8,9}= 25/36$.
\end{enumerate}

We assume now that $n \geq 3$ and we obtain the following lemmas:

\begin{Lemma} \label{lemma:czero}
If $x$ and $y$ are two points such that $d(x,y) \geq 3$ then $c_{x,y}^n = 0$.
\end{Lemma}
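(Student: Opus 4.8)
The plan is to deduce this directly from Theorem \ref{zerocoefficients}, applied with ambient graph $\Gamma = V_n$ and minimizing set $\Gamma' = V_n \setminus V_{n-1}$. By that theorem, $c_{x,y}^n = 0$ precisely when every path in $V_n$ from $x$ to $y$ meets $V_n \setminus V_{n-1}$ at an interior vertex; equivalently, $c_{x,y}^n > 0$ exactly when there is a path $x = z_0 \underset{n}{\sim} z_1 \underset{n}{\sim} \cdots \underset{n}{\sim} z_k = y$ whose interior vertices $z_1, \dots, z_{k-1}$ all lie in $V_{n-1}$. So I would prove the contrapositive: if such a path exists, then $d(x,y) \le 2$, whence the hypothesis $d(x,y) \ge 3$ forces $c_{x,y}^n = 0$.

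The decisive structural fact I would establish first is that no two distinct points of $V_{n-1}$ are $n$-neighbors, i.e. $V_{n-1}$ is an independent set in $\Gamma_n$. To see this, recall that an $n$-cell has the form $F_u(SG)$ with $u = u'a$, $|u'| = n-1$, and its three corners are $F_{u'}(F_a(q_j))$ for $j = 0,1,2$. When $j = a$ this equals $F_{u'}(q_a) \in V_{n-1}$, while for $j \ne a$ we get $F_{u'}(x_c)$, where $x_c$ is a level-one midpoint, and this point lies in $V_n \setminus V_{n-1}$ since $F_{u'}$ maps $V_1 \setminus V_0$ into $V_n \setminus V_{n-1}$. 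Thus each $n$-cell carries exactly one corner in $V_{n-1}$. Since two points are $n$-neighbors only if they are corners of a common $n$-cell, two distinct points of $V_{n-1}$ can never be $n$-adjacent; equivalently, every edge of $\Gamma_n$ has at least one endpoint in $V_n \setminus V_{n-1}$.

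With this fact in hand the argument concludes quickly. Along any path whose interior lies in $V_{n-1}$, two consecutive interior vertices $z_j, z_{j+1}$ would be $n$-adjacent points of $V_{n-1}$, which the structural fact forbids. Hence the path can contain at most one interior vertex, so its length $k$ satisfies $k \le 2$; in particular $d(x,y) \le 2$. This establishes the contrapositive and hence the lemma.

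I expect the only genuine content to be the structural fact that $V_{n-1}$ is independent in $\Gamma_n$; once that is isolated, the reduction through Theorem \ref{zerocoefficients} is immediate. The sole point requiring care is the bookkeeping of the corners of an $n$-cell in terms of the word $u = u'a$, making sure the two off-diagonal corners $F_{u'}(x_c)$ are genuinely new points of level $n$ rather than accidentally lying in $V_{n-1}$.
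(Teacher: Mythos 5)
Your proposal is correct and takes essentially the same route as the paper: the paper's entire proof is the remark that the lemma follows immediately from Theorem \ref{zerocoefficients}, applied with ambient graph $\Gamma_n$ and minimizing set $V_n \setminus V_{n-1}$. Your write-up merely supplies the verification the paper leaves implicit, namely that $V_{n-1}$ is an independent set in $\Gamma_n$ (each $n$-cell has exactly one corner in $V_{n-1}$), so any path between points of $V_n \setminus V_{n-1}$ whose interior avoids $V_n \setminus V_{n-1}$ has length at most $2$.
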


\begin{proof}
This follows immediately from Theorem \ref{zerocoefficients}.
\end{proof}

 \begin{figure}[ht]
 \centering
 \includegraphics[scale=0.7]{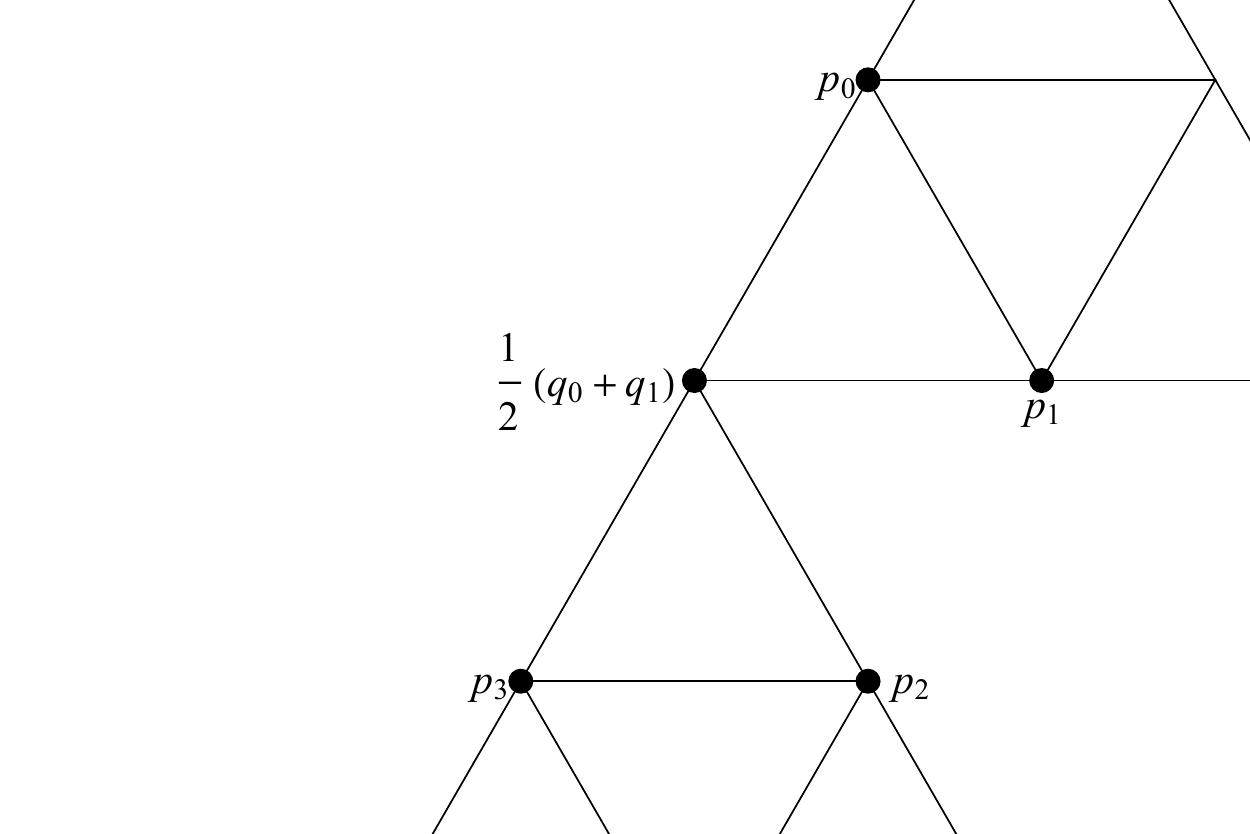}
 
\caption{The neighbors of a junction point. Here we see $\{p_j\} = D^i_n$}
\label{figure:junctionpoint}
\end{figure}
\begin{Lemma}(Four-neighbors) \label{lemma:fourneigh}
Let $a, b \in D_n^i$. If $d(a,b) = 1$ then $c_{a,b}^{n} = (\frac{5}{3})^n \frac{5}{4}$. In the case $d(a,b) = 2$, we have $c_{a,b}^{n} = (\frac{5}{3})^n \frac{1}{4}$.
\end{Lemma}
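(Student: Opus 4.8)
The plan is to read off $c^n_{a,b}$ as the effective conductance produced by deleting all of $V_{n-1}$ from the weighted graph $\Gamma_n$, in which every edge carries conductance $(5/3)^n$ (Corollary \ref{corollary: neighbors}), and to carry out this deletion by applying the single-point reduction formula (\ref{equation:conductance}) successively to the points of $V_{n-1}$. The structural fact I would establish first is that \emph{no two points of $V_{n-1}$ are joined by an edge of $\Gamma_n$}: a level-$n$ cell $F_w(SG)$ with $w=w'i$, $|w'|=n-1$, meets $V_{n-1}$ only in the vertex $F_{w'}(q_i)$ it shares with its parent $(n-1)$-cell, its other two vertices $F_{w'}F_i(q_j)$ ($j\neq i$) being midpoints of $F_{w'}$ and hence lying in $V_n\setminus V_{n-1}$. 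Thus every deleted point has all of its $\Gamma_n$-neighbors inside the retained set.

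This observation decouples the reduction. When (\ref{equation:conductance}) is used to eliminate a point $z\in V_{n-1}$, the new conductances it creates lie only between retained points, so neither the neighborhood nor the weighted degree of any not-yet-eliminated point of $V_{n-1}$ is disturbed. Hence the order of elimination is irrelevant, and for a fixed pair $a,b$ a deleted point $z$ contributes to $c^n_{a,b}$ precisely when both $a$ and $b$ are $\Gamma_n$-neighbors of $z$; by (\ref{equation:conductance}) that contribution equals $\frac{(5/3)^n(5/3)^n}{\deg_w z}$, where $\deg_w z$ is the weighted degree of $z$ in $\Gamma_n$. For an interior junction point $z\in V_{n-1}\setminus V_0$ there are four neighbors, so $\deg_w z=4(5/3)^n$ and the contribution is $(5/3)^n/4$. (This collapse of the reduction to a single local term is exactly what is compatible with the vanishing for distance $\ge 3$ in Lemma \ref{lemma:czero}.)

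Next I would identify the common deleted neighbors of a pair $a,b\in D^i_n$. Writing $x_i$ as the shared corner of the two $(n-1)$-cells $F_w,F_{w'}$ meeting at it, the four points of $D^i_n$ are the two midpoints of $F_w$ and the two midpoints of $F_{w'}$ that are adjacent to $x_i$. If $d(a,b)=1$, then $a,b$ are the two midpoints of a single cell adjacent to $x_i$, and inspection of the level-one adjacency of midpoints and corners shows their only common neighbor in $V_{n-1}$ is $x_i$. If $d(a,b)=2$, then $a,b$ lie in the two different cells, and since $F_w\cap F_{w'}=\{x_i\}$ their only common deleted neighbor is again $x_i$. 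In both cases $x_i\in V_1\setminus V_0$ is an interior junction with $\deg_w x_i=4(5/3)^n$, so eliminating $x_i$ adds $(5/3)^n/4$ to $c^n_{a,b}$ and no other deleted point contributes. Retaining the surviving original edge $a\sim b$ of conductance $(5/3)^n$ when $d(a,b)=1$, and nothing when $d(a,b)=2$, yields $c^n_{a,b}=(5/3)^n+\tfrac14(5/3)^n=\tfrac54(5/3)^n$ and $c^n_{a,b}=\tfrac14(5/3)^n$ respectively.

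The hard part will be the bookkeeping that justifies locality: proving cleanly that no two deleted points are adjacent, and deducing from this that repeated use of (\ref{equation:conductance}) never changes a deleted point's degree and never creates an $a$--$b$ term except through a genuine common neighbor. Once that is in place, the ``unique common neighbor'' claims reduce to a finite check on the level-one graph (e.g. $q_0\sim x_1,x_2$ and $x_0,x_1,x_2$ mutually adjacent), and the arithmetic is immediate.
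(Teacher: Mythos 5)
Your proposal is correct and is essentially the paper's own proof: the paper's one-line ``direct calculation after observing Figure \ref{figure:junctionpoint}'' is precisely the local star--mesh elimination of the junction point $x_i$ that you carry out, yielding $(\frac{5}{3})^n + \frac{1}{4}(\frac{5}{3})^n$ for adjacent pairs and $\frac{1}{4}(\frac{5}{3})^n$ for pairs at distance two. Moreover, the ``hard part'' you flag at the end is already done in your first two paragraphs: the cell-counting argument (each $n$-cell has exactly one vertex in $V_{n-1}$, so no two deleted points are $\Gamma_n$-adjacent) plus the observation that each application of (\ref{equation:conductance}) creates edges only between retained points gives exactly the order-independence and locality needed, which the paper leaves implicit.
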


\begin{proof}
Follows from a direct calculation after observing figure \ref{figure:junctionpoint}.
\end{proof}

\begin{Lemma}
For $n \geq 3$, let $x,y \in F_i(V_{n-1}\setminus V_{n-2}) \setminus (D_n^{i+1} \cup D_n^{i-1})$.  Then, $c_{x,y}^{n} = \frac{5}{3}c_{x,y}^{n-1}$.
\end{Lemma}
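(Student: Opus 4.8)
The plan is to combine the self-similarity of the energy $\E$ with a locality (decoupling) principle. The coefficient $c^n_{x,y}$ is the effective conductance obtained by eliminating the free vertices $V_{n-1}$ from the level-$n$ network $\Gamma_n$, all of whose edges carry conductance $(5/3)^n$; I will show that when $x,y$ lie in the interior of the cell $F_i(SG)$ and avoid the two boundary junctions, this effective conductance is computed entirely inside $F_i(SG)$ and is therefore insensitive to the rest of $SG$. Granting that, self-similarity supplies the factor $5/3$.

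First I would recall, exactly as in the one-point reduction formula (\ref{equation:conductance}) and Lemma \ref{lemma:subgraph}, that the minimized form with values prescribed on $E=V_n\setminus V_{n-1}$ is produced by successively eliminating the vertices of $V_{n-1}$, and that the resulting coefficients are intrinsic to the minimizer and hence independent of the order of elimination. The key geometric observation is that $x_{i+1}$ and $x_{i-1}$ are the only vertices through which $F_i(SG)$ is joined to the rest of $SG$, and that each of them has all four of its $V_n$-neighbours, namely $D^{i+1}_n$ and $D^{i-1}_n$, lying in $E$ (they are new level-$n$ points). Thus inside the interior set $V_{n-1}$ the two junctions are isolated vertices, and no interior vertex of $F_i(SG)$ is ever adjacent to an interior vertex outside $F_i(SG)$. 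In the reduction this means that eliminating the exterior vertices and the two junctions (the latter being precisely the four-neighbour reduction of Lemma \ref{lemma:fourneigh}) only alters conductances among the points of $D^{i+1}_n\cup D^{i-1}_n$ and the exterior points of $E$; it changes no conductance incident to $x$, to $y$, or to any interior vertex of $F_i(SG)$. This is exactly where the hypothesis $x,y\notin D^{i+1}_n\cup D^{i-1}_n$ is used: it guarantees that neither $x$ nor $y$ is a neighbour of a junction, so the whole subnetwork on $F_i(V_{n-1})$ survives these eliminations untouched.

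Once the exterior and the junctions are gone, eliminating the remaining free vertices $F_i(V_{n-2})$ is identical to the same elimination carried out on the isolated cell $F_i(SG)$, so $c^n_{x,y}$ equals the effective conductance between $x$ and $y$ in the network on $F_i(V_{n-1})$ with all conductances $(5/3)^n$ and the vertices of $F_i(V_{n-2})$ eliminated. Applying $F_i^{-1}$ identifies this network with the level-$(n-1)$ New Level network on $V_{n-1}$ (minimizing set $V_{n-1}\setminus V_{n-2}$), with every conductance multiplied by the single factor $5/3=(5/3)^n/(5/3)^{n-1}$. Since an effective conductance is linear in all edge conductances, this gives $c^n_{x,y}=\tfrac{5}{3}\,c^{n-1}_{F_i^{-1}(x),F_i^{-1}(y)}$, which is the assertion under the natural identification of $x,y$ with their level-$(n-1)$ preimages; the condition $n\geq 3$ ensures $n-1\geq 2$ so that this is a genuine instance of the level-$(n-1)$ computation.

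The main obstacle is making the decoupling step rigorous, i.e. verifying that eliminating the exterior vertices and the two junctions perturbs no conductance touching $x$, $y$, or the interior of $F_i(SG)$. The cleanest way is to phrase the reduction as the Schur complement of the interior block $L_{V_{n-1},V_{n-1}}$ of the graph Laplacian: the observation above shows this block splits as a direct sum of the interior block of $F_i(SG)$, the exterior block, and a diagonal block for the two isolated junctions, while the rows indexed by $x$ and $y$ meet only the first summand. Hence $c^n_{x,y}$ depends solely on the interior block of $F_i(SG)$, and the added junction-induced conductances among the $D_n$-points, being edges between retained boundary vertices, cannot re-enter the computation of $c^n_{x,y}$. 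One should finally check via Theorem \ref{zerocoefficients} that $x$ and $y$ stay connected through the interior of $F_i(SG)$, so that the quantity obtained is the intended level-$(n-1)$ coefficient.
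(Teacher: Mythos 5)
Your proof is correct and rests on the same idea as the paper's, whose entire proof is the single sentence that the claim ``follows from the local nature of the calculation of energy due to Lemma \ref{lemma:czero}'': locality of the reduction inside the cell $F_i(SG)$ plus self-similar scaling of the conductances by $5/3$. Your Schur-complement/star-mesh decoupling (using that the eliminated set $V_{n-1}$ spans no edges, that the cell meets the rest of $SG$ only at the two junctions, and that the hypothesis $x,y\notin D_n^{i+1}\cup D_n^{i-1}$ keeps $x,y$ away from the junction eliminations) is exactly a rigorous filling-in of that locality claim, so the two arguments agree in substance.
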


\begin{proof}
Follows from the local nature of the calculation of energy due to Lemma \ref{lemma:czero}.
\end{proof}

\subsection{Bottom Row}
Here we consider the set $E$ which is the bottom row of $V_n$. We use the natural ordering on $E$ from left to right to write $x_0, \dots, x_{2^n}$ and denote the values attained at $x_i$ by $t_i$. Unlike the last two examples, this set has a very complicated structure. To develop a method to determine the coefficients we add the top point of $V_n$ to $E$ and denote the value at this point $q_0$ by $a$. This allows us to establish a recurrence between coefficients. We then relate these coefficients to our original problem without the top point. \\

After determining the coefficients for the quadratic form, we will see that the energy of the minimizer has a unique structure expressible in terms of Haar functions that provides a discrete analog of the continuous result established in \cite{OS}.

\subsubsection{Bottom Row with Top Point}
Throughout this section we denote the coefficients for level $n$ as $c^n_{x,y}.$ Also we use the shorthand $c^n_{x_i,x_j} = c^n_{i,j}$.
We get the following relation between the energy of the minimizers on different levels:

\begin{equation} \label{equation: bottomlinerecurse}
\E_{n+1}(u) = \frac{5}{3}\left((a-b)^2 + (b-c)^2 + (c-a)^2 + \E_n(u \circ F_1) + \E_n(u \circ F_2)\right),
\end{equation}
where $b=u(\frac{1}{2}(q_0+q_1))$ and $c=u(\frac{1}{2}(q_0+q_2))$.
Now we want to minimize the energy with respect to $b$ and $c$ to get the quadratic form in terms of $a$ and $x_i$.

\begin{Lemma}\label{lemma:bottomlinetoppointrecurse}

We set $a_n = c^n_{q_0, x_0} = c^n_{q_0,x_{2^n }}$. Then 
$$c^n_{q_0, x_j} = 2 a_n \text{ for } 0 < j < 2^n$$
and $$a_n = \frac{7\cdot 5^n}{3^n + 6\cdot 10^n}~.$$

The energy is minimized with 

\begin{align*}
b  & = \frac{(3+2^{n+1}a_n)a + (2+2^{n+1}a_n)(a_n)( t_0+ 2\sum_{i=1}^{2^n-1}{t_i} + t_{2^n})}{(1+ 2^{n+1} a_n)(3+ 2^{n+1} a_n)}\\ 
&~~~~ +\frac{a_n( t_{2^n}+ 2\sum_{i=2^n+1}^{2^{n+1}-1}t_i + t_{2^{n+1}})}{(1+ 2^{n+1} a_n)(3+ 2^{n+1} a_n)} \\
 c  & =\frac{(3+2^{n+1}a_n)a + a_n( t_0+ 2\sum_{i=1}^{2^n-1}{t_i} + t_{2^n})}{(1+ 2^{n+1} a_n)(3+ 2^{n+1} a_n)} \\
 &~~~~ +\frac{ (2+2^{n+1}a_n)(a_n)	( t_{2^n}+ 2\sum_{i=2^n+1}^{2^{n+1}-1}t_i + t_{2^{n+1}})}{(1+ 2^{n+1} a_n)(3+ 2^{n+1} a_n)}~.
 \end{align*}

\end{Lemma}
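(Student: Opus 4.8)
\section*{Proof proposal}

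The plan is to prove the three assertions simultaneously by induction on $n$, treating the displayed recurrence (\ref{equation: bottomlinerecurse}) as the engine that passes from level $n$ to level $n+1$. The inductive hypothesis $H(n)$ is that the top-to-bottom coefficients at level $n$ have the stated shape, namely $c^n_{q_0,x_0}=c^n_{q_0,x_{2^n}}=a_n$ at the two endpoints and $c^n_{q_0,x_j}=2a_n$ at every interior index $0<j<2^n$, with $a_n$ given by the closed form. The base case $H(0)$ is immediate: the level-$0$ configuration is the triangle $V_0=\{q_0,q_1,q_2\}$, whose harmonic energy is $(a-t_0)^2+(a-t_1)^2+(t_0-t_1)^2$, so $a_0=1$, there are no interior points, and $\frac{7\cdot 5^0}{3^0+6\cdot 10^0}=1$ checks out. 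Equality of the two endpoint coefficients at every level is forced by the reflection symmetry $q_1\leftrightarrow q_2$ (which fixes $q_0$ and sends $x_j\mapsto x_{2^n-j}$), so only $a_n$ and the uniform interior value must be tracked.

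Next I would carry out the minimization over $b=u(\tfrac12(q_0+q_1))$ and $c=u(\tfrac12(q_0+q_2))$ in (\ref{equation: bottomlinerecurse}). The key point is that the within-row terms of $\E_n(u\circ F_1)$ and $\E_n(u\circ F_2)$ do not involve $b$ or $c$, so only the top-to-bottom coefficients enter; by $H(n)$ the couplings of $b$ and $c$ to the data assemble into $T_1=a_n\bigl(t_0+2\sum_{i=1}^{2^n-1}t_i+t_{2^n}\bigr)$ and $T_2=a_n\bigl(t_{2^n}+2\sum_{i=2^n+1}^{2^{n+1}-1}t_i+t_{2^{n+1}}\bigr)$, while $\sum_j c^n_{q_0,x_j}=2^{n+1}a_n=:S_n$. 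Setting $\partial_b=\partial_c=0$ yields the symmetric $2\times2$ system $(2+S_n)b=a+c+T_1$, $(2+S_n)c=a+b+T_2$, whose determinant is $(1+S_n)(3+S_n)$. Solving it produces exactly the stated formulas for $b$ and $c$, which already establishes the last assertion of the Lemma.

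To recover the coefficients at level $n+1$ I would use $c^{n+1}_{q_0,x_j}=-\tfrac12\,\partial_a\partial_{t_j}\E_{n+1}$. Writing $\E_{n+1}=\min_{b,c}G$ and applying the envelope theorem, only the explicit $a$-dependence of $G$ survives; since that dependence sits entirely in $\tfrac{5}{3}\bigl((a-b)^2+(c-a)^2\bigr)$, we get $\partial_aG=\tfrac{10}{3}(2a-b-c)$, which carries no explicit $t_j$, and differentiating once more through the minimizers $b^*,c^*$ gives the clean formula $c^{n+1}_{q_0,x_j}=\tfrac{5}{3}\,\partial_{t_j}(b^*+c^*)$. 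Adding the two solved expressions collapses everything to $b^*+c^*=\frac{2a+T_1+T_2}{1+S_n}$, so $\partial_{t_j}(b^*+c^*)$ equals $\frac{a_n}{1+S_n}$ at the two endpoints and $\frac{2a_n}{1+S_n}$ at every interior index (the glued middle point $x_{2^n}$ picks up $a_n$ from each of $T_1,T_2$, hence $2a_n$, consistent with its being interior at level $n+1$). This proves $H(n+1)$ with $a_{n+1}=\tfrac{5}{3}\,\frac{a_n}{1+2^{n+1}a_n}$.

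Finally I would solve this recursion. Substituting $S_n=2^{n+1}a_n$ turns it into $S_{n+1}=\tfrac{10}{3}\,\frac{S_n}{1+S_n}$, and then $u_n:=1/S_n$ satisfies the affine recursion $u_{n+1}=\tfrac{3}{10}u_n+\tfrac{3}{10}$ with $u_0=\tfrac12$; its fixed point is $\tfrac37$, giving $u_n=\tfrac37+\tfrac{1}{14}\bigl(\tfrac{3}{10}\bigr)^n$, hence $S_n=\frac{14\cdot 10^n}{3^n+6\cdot 10^n}$ and $a_n=S_n/2^{n+1}=\frac{7\cdot 5^n}{3^n+6\cdot 10^n}$, as claimed. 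I expect the main obstacle to be the bookkeeping of the inductive step rather than any single hard estimate: one must check that the envelope computation genuinely isolates $\partial_{t_j}(b^*+c^*)$, that the interior-versus-endpoint dichotomy is preserved (in particular that the glued middle point $x_{2^n}$ lands in the interior regime at level $n+1$), and that the recursion for $a_n$ is consistent with $H(n)$; the minimization and the solution of the recursion are then routine.
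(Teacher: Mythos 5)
Your proposal is correct and takes essentially the same route as the paper: induction using the recurrence (\ref{equation: bottomlinerecurse}), solving the symmetric $2\times 2$ linear system for $b,c$, reading off $c^{n+1}_{q_0,x_j}$ from the mixed partial $-\tfrac12\partial_a\partial_{t_j}\E_{n+1}$, and solving the resulting recursion $a_{n+1}=\tfrac{5a_n}{3(1+2^{n+1}a_n)}$ with $a_0=1$. Your envelope-theorem formulation $c^{n+1}_{q_0,x_j}=\tfrac{5}{3}\partial_{t_j}(b^*+c^*)$ and the substitutions $S_n=2^{n+1}a_n$, $u_n=1/S_n$ simply make explicit the steps the paper compresses into ``upon examination we see'' and ``solving this recurrence,'' and your bookkeeping (including the glued point $x_{2^n}$ becoming interior at level $n+1$) is sound.
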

\begin{proof}
We use induction and obtain the base case by direct computation. We omit this computation. \\

To use induction, we compute the derivative with respect to $b$ to obtain
\[
\frac{\partial \E_{n+1}}{\partial b} = \frac{5}{3}(2(b-a) + 2(b-c)) + \frac{5}{3} \frac{\partial \E_n}{ \partial b}.
\]
From the inductive hypothesis we have
\[
\frac{\partial \E_n}{ \partial b} = 2 a_n ((b-t_0) + (b-t_{2^n})) + 4 a_n(\sum_{i=1}^{2^n-1} (b-t_i)).
\]

Then, we obtain a system of linear equations from $\frac{\partial \E_{n+1}}{\partial b} = 0,
\frac{\partial \E_{n+1}}{\partial c} = 0:
$

$$
\left(\begin{matrix}
 2+2^{n+1}a_n&-1 \\ 
 -1& 2+2^{n+1}a_n
\end{matrix}\right)
\left(\begin{matrix}
b\\
c
\end{matrix}\right)
=\left(\begin{matrix}
a + a_n( t_0+ 2\sum_{i=1}^{2^n-1}{t_i} + t_{2^n }) \\
	a + a_n( t_{2^n}+ 2\sum_{i=2^n+1}^{2^{n+1}-1}t_i + t_{2^{n+1}})
\end{matrix}\right).
$$
We solve the above linear system to obtain the desired $b$ and $c$. \\

Upon examination we see $-2 c_{q_0, x_0} = \partial_{at_0} \E_{n+1} = \frac{1}{2} \partial_{at_j} \E_{n+1} =   -c_{q_0, x_j}$ for $0<j < 2^n$.\\
To calculate the explicit formula for $a_n$, we get $a_0=1$ and the following recurrence:
$$2 a_{n+1} = \partial_{at_0} \E_{n+1} = \frac{-10 a_n}{3(2^{n+1}a_n+1)}.$$
Solving this recurrence with our appropriate inital starting condition, we get
$$a_n = \frac{7\cdot 5^n}{3^n + 6\cdot 10^n}.$$

\end{proof}
By examining (\ref{equation: bottomlinerecurse}) we derive several facts about the coefficients, using 
$$c^{n+1}_{y,z} = \frac{-1}{2}\partial_{u(y),u(z)} \E_{n+1}.$$

\begin{Lemma}
We set $b_n = c^n_{0, 2^n}$. Then 
\begin{align*}
c^n_{0, j} = 2 b_n && \text{for}~~~~2^{n-1}  < j < 2^n, \\
c^n_{j, 2^n} = 2 b_n &&  \text{for}~~~~0 < j < 2^{n-1}  \\
c^n_{i, j} = 4 b_n &&  \text{for}~~~~0 < i < 2^{n-1}, 2^{n-1} < j < 2^n. 
\end{align*}
Also, we have 
$$b_{n} = \frac{49\cdot 25^{ 
  n }}{(5 \cdot 3^{n} + 16 \cdot 10^{n}) (3^n + 6 \cdot 10^{n})}.$$

\end{Lemma}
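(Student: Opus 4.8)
The plan is to extract these cross-half conductances directly from the recursion (\ref{equation: bottomlinerecurse}) together with the explicit minimizing values of $b$ and $c$ found in Lemma \ref{lemma:bottomlinetoppointrecurse}. As in the derivation of that lemma, I regard the level-$n$ minimized energy $\E_n$ as a quadratic form in the top value $a$ and the bottom values $t_0,\dots,t_{2^n}$, obtained after minimizing out the two interior vertices $b$ and $c$; the conductances are then read off as $c^n_{i,j}=-\tfrac12\,\partial_{t_i t_j}\E_n$. The key point is that the index $2^{n-1}$ is the midpoint of the bottom row, so the three displayed families are precisely the conductances joining the left half to the right half, and $b_n=c^n_{0,2^n}$ is the endpoint-to-endpoint case.

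First I would record the decoupling forced by the self-similar splitting. Writing the level-$n$ problem via (\ref{equation: bottomlinerecurse}) (applied at level $n$), the left half $t_0,\dots,t_{2^{n-1}}$ of the bottom row is the image under $F_1$ of a level-$(n-1)$ bottom-row-with-top-point whose top value is $b$, while the right half is the image under $F_2$ of a similar copy with top value $c$. Consequently a left value $t_i$ enters only through $\E_{n-1}(u\circ F_1)$ (coupled to $b$) and a right value $t_j$ only through $\E_{n-1}(u\circ F_2)$ (coupled to $c$); neither appears in the top-cell term $(a-b)^2+(b-c)^2+(c-a)^2$, and no term couples them directly.

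The heart of the computation is then an envelope/chain-rule argument. Since the minimizers $b^*,c^*$ satisfy $\partial_b\E=\partial_c\E=0$ at the optimum, we have $\partial_{t_i}\E^{\min}=\partial_{t_i}\E|_*$ and
$$\partial_{t_j t_i}\E^{\min}=\partial_{t_j t_i}\E+\partial_{b t_i}\E\,\partial_{t_j}b^*+\partial_{c t_i}\E\,\partial_{t_j}c^*.$$
For $i$ in the left half and $j$ in the right half the first term vanishes (no direct $t_i t_j$ coupling) and $\partial_{c t_i}\E=0$ (a left value is never coupled to $c$), leaving the single channel $c^n_{i,j}=-\tfrac12\,\partial_{b t_i}\E\,\partial_{t_j}b^*$. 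The structure of $\partial_b\E_{n-1}$ from Lemma \ref{lemma:bottomlinetoppointrecurse} contributes a factor $2a_{n-1}$ when $t_i$ is an endpoint of the left copy and $4a_{n-1}$ when it is interior, while the explicit formula for $b^*$ contributes a factor proportional to $a_{n-1}$ (endpoint) or $2a_{n-1}$ (interior), with common denominator $D=(1+2^{n}a_{n-1})(3+2^{n}a_{n-1})$. Multiplying the appropriate factors reproduces the ratios $1:2:2:4$, i.e. the four cases $b_n,\,2b_n,\,2b_n,\,4b_n$, with
$$b_n=\frac{5}{3}\,\frac{a_{n-1}^2}{(1+2^{n}a_{n-1})(3+2^{n}a_{n-1})}.$$

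Finally I would substitute $a_{n-1}=\dfrac{7\cdot5^{\,n-1}}{3^{\,n-1}+6\cdot10^{\,n-1}}$ from Lemma \ref{lemma:bottomlinetoppointrecurse} and simplify. Using $2^{n}a_{n-1}=14\cdot10^{\,n-1}/(3^{\,n-1}+6\cdot10^{\,n-1})$ one finds that $1+2^{n}a_{n-1}$ and $3+2^{n}a_{n-1}$ have numerators $3^{\,n-1}+20\cdot10^{\,n-1}$ and $3^{n}+32\cdot10^{\,n-1}$; together with the identities $3^{n}+6\cdot10^{n}=3(3^{\,n-1}+20\cdot10^{\,n-1})$ and $5\cdot3^{n}+16\cdot10^{n}=5(3^{n}+32\cdot10^{\,n-1})$ this collapses to the claimed closed form. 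I expect the main obstacle to be purely bookkeeping: keeping the chain-rule expansion honest (in particular justifying that both the direct $t_it_j$ term and the $c^*$-channel drop out, so that a single product survives) and then carrying out the algebraic simplification. The base case $n=1$, which gives $b_1=\tfrac19$ and agrees with the formula, provides a useful consistency check.
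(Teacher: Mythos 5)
Your proposal is correct and follows essentially the same route the paper intends: the paper states this lemma with no written proof, saying only that the coefficients are derived ``by examining (\ref{equation: bottomlinerecurse})'' via $c^{n+1}_{y,z} = -\tfrac{1}{2}\partial_{u(y),u(z)}\E_{n+1}$, and your envelope/chain-rule computation using the explicit minimizing $b^{*},c^{*}$ from Lemma \ref{lemma:bottomlinetoppointrecurse} is precisely that calculation carried out in detail. I verified both the $1{:}2{:}2{:}4$ ratios (the direct coupling and the $c^{*}$-channel do vanish for left--right pairs, since a left value meets only $b$ and the two halves share no variable other than the excluded midpoint) and the algebra: your $b_n=\tfrac{5}{3}\,a_{n-1}^{2}/\bigl((1+2^{n}a_{n-1})(3+2^{n}a_{n-1})\bigr)$ does collapse to the stated closed form, with $b_1=\tfrac{1}{9}$ as a consistent base case.
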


\begin{Lemma} \label{lemma:proved}
We set $l_n = c^n_{0, 1}$. Then we get the  recurrence
$$l_1 = \frac{20}{9},$$ 
$$l_{n+1} = \frac{5}{3} l_{n} + \frac{98 \cdot 25^{n+1} (10 \cdot 3^{n+1} + 39 \cdot 10^{n+1})}{(3^{n+1} + 6 \cdot 10^{n+1}) (5 \cdot 3^{n+1} + 
  16 \cdot 10^{n+1}) (5 \cdot 3^{n+1} + 9 \cdot 10^{n+1})}$$ for $n\ge 1$.
\end{Lemma}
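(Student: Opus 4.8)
The plan is to establish the base case $l_1 = 20/9$ by a direct level-one computation and to obtain the recurrence by differentiating the minimized-energy identity (\ref{equation: bottomlinerecurse}) twice, using the relation $c^{n+1}_{y,z} = \frac{-1}{2}\partial_{u(y),u(z)}\E_{n+1}$ recorded just above the statement. For the base case I would write out the $V_1$ graph energy with the two upper midpoint values left free, minimize over them (a $2\times 2$ linear solve), and read off the coefficient of $(t_0-t_1)^2$ in the resulting quadratic form in $(a,t_0,t_1,t_2)$; this is a short explicit calculation and yields $20/9$.

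For the recurrence the key structural observation is that the restriction of the level-$(n+1)$ minimizer to the bottom-left cell $F_1(SG)$ is itself a level-$n$ bottom-row-with-top-point minimizer, now with top value $b = u(\tfrac{1}{2}(q_0+q_1))$ and bottom values $t_0,\dots,t_{2^n}$. Hence the term $\E_n(u\circ F_1)$ in (\ref{equation: bottomlinerecurse}) is exactly the level-$n$ quadratic form, carrying the level-$n$ coefficients $l_n$ and $a_n$. Since for $n\ge 1$ we have $0<1<2^n$, both $x_0$ and $x_1$ lie in $F_1(SG)$, so $t_0$ and $t_1$ enter $\E_{n+1}$ only through $\E_n(u\circ F_1)$ directly and through the minimizing junction value $b$; crucially they have no cross-dependence on $c$ nor on the top-cell terms except via $b$.

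Because $b$ and $c$ are chosen to minimize, the first-order contributions $\partial_b\E_{n+1}=\partial_c\E_{n+1}=0$ vanish, and the mixed second derivative reduces to the Schur-complement (reduced Hessian) expression
$$\partial_{t_0 t_1}\E_{n+1} = \frac{5}{3}\,\partial_{t_0 t_1}\E_n + \frac{5}{3}\,\partial_{t_0 b}\E_n\,\partial_{t_1}b,$$
with no $c$-term since $t_0$ carries no cross term with $c$. Here $\partial_{t_0 t_1}\E_n = -2l_n$ (by definition of $l_n$) and $\partial_{t_0 b}\E_n = -2a_n$, the latter because $a_n = c^n_{q_0,x_0}$; moreover $\partial_{t_1}b = \dfrac{2(2+2^{n+1}a_n)\,a_n}{(1+2^{n+1}a_n)(3+2^{n+1}a_n)}$ is read off the explicit formula for $b$ in Lemma \ref{lemma:bottomlinetoppointrecurse} (the $t_1$-coefficient in the interior sum $2\sum_{i=1}^{2^n-1}t_i$). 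Applying $l_{n+1} = -\tfrac{1}{2}\,\partial_{t_0 t_1}\E_{n+1}$ then gives the intermediate form
$$l_{n+1} = \frac{5}{3}\,l_n + \frac{10\,a_n^2\,(2+2^{n+1}a_n)}{3\,(1+2^{n+1}a_n)(3+2^{n+1}a_n)}.$$

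The final step is to substitute $a_n = \dfrac{7\cdot 5^n}{3^n+6\cdot 10^n}$ and simplify the correction term into the stated closed form: writing $D_n = 3^n+6\cdot 10^n$ one has $2^{n+1}a_n = 14\cdot 10^n/D_n$, so that $1+2^{n+1}a_n$, $3+2^{n+1}a_n$ and $2+2^{n+1}a_n$ become, over the common denominator $D_n$, the factors $3^n+20\cdot 10^n$, $3^{n+1}+32\cdot 10^n$ and $2(3^n+13\cdot 10^n)$; together with $a_n^2 = 49\cdot 25^n/D_n^2$ these recombine, after clearing the $5$, $5$ and $15$ from the three factors, into the numerator $98\cdot 25^{n+1}(10\cdot 3^{n+1}+39\cdot 10^{n+1})$ and denominator $(3^{n+1}+6\cdot 10^{n+1})(5\cdot 3^{n+1}+16\cdot 10^{n+1})(5\cdot 3^{n+1}+9\cdot 10^{n+1})$. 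I expect this algebraic simplification, together with the bookkeeping that isolates the single $b$-contribution and excludes $c$, to be the main obstacle; the differentiation itself is routine once the self-similar identification of $\E_n(u\circ F_1)$ with the level-$n$ form is in place.
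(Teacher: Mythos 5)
Your proposal is correct and takes essentially the approach the paper intends: the paper states this lemma without proof, remarking only that the coefficients follow "by examining (\ref{equation: bottomlinerecurse})" via $c^{n+1}_{y,z} = -\tfrac{1}{2}\partial_{u(y),u(z)}\E_{n+1}$, and your envelope-theorem/reduced-Hessian differentiation, using $\partial_{t_0 b}\E_n = -2a_n$ and the explicit minimizing $b$ from Lemma \ref{lemma:bottomlinetoppointrecurse}, is precisely the natural realization of that remark. I verified both the intermediate recurrence $l_{n+1} = \tfrac{5}{3}l_n + \tfrac{10a_n^2(2+2^{n+1}a_n)}{3(1+2^{n+1}a_n)(3+2^{n+1}a_n)}$ and its algebraic simplification to the stated closed form (e.g.\ both give correction term $\tfrac{9500}{36801}$ at $n=1$), so the proof is complete as outlined.
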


\begin{Lemma}
We set $m_n = c^n_{{2^{n-1} -1}, {2^{n-1}}} = c^n_{{2^{n-1}}, {2^{n-1}}+1}$. Then we get the following formula in terms of $l_n$:
$$m_1 = l_1 = \frac{20}{9},$$ 
$$m_{n+1} = \frac{5}{3} l_{n} + \frac{294\cdot 25^{n+1}}{(3^{n+1} + 6 \cdot 10^{n+1}) (5 \cdot 3^{n+1} + 9 \cdot 10^{n+1})}$$ for $n \ge 1$.
\end{Lemma}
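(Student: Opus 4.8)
The plan is to treat the base case $m_1=l_1=\tfrac{20}{9}$ directly and to establish the recurrence for $n\ge 1$ by the same differentiation scheme used in the preceding lemmas, reading conductances off the minimized energy through $c^{n+1}_{y,z}=-\tfrac12\partial_{u(y)u(z)}\E_{n+1}$ applied to the recurrence (\ref{equation: bottomlinerecurse}). For the base case, at level one the bottom row consists only of the three points $x_0=q_1$, $x_1=\tfrac12(q_1+q_2)$, $x_2=q_2$, so $m_1=c^1_{0,1}=c^1_{1,2}=l_1$ is immediate from the definition of $l_1$ and the left-right symmetry of the configuration.

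For the inductive step I would first pin down the geometry. At level $n+1$ the central point is $x_{2^n}=\tfrac12(q_1+q_2)=F_1q_2=F_2q_1$, the corner shared by the two bottom cells; its left neighbour $x_{2^n-1}$ lies in the interior of the bottom row of $F_1$, and $x_{2^n}$ is the right endpoint of that row. Under $u\circ F_1$ this row becomes the level-$n$ bottom row whose top point carries the value $b$. Hence $c^n_{2^n-1,2^n}=l_n$ (by the symmetry $c^n_{0,1}=c^n_{2^n-1,2^n}$), and by Lemma \ref{lemma:bottomlinetoppointrecurse} the couplings of $x_{2^n-1}$ and $x_{2^n}$ to the top point $b$ are the interior and endpoint values $2a_n$ and $a_n$. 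Since $x_{2^n-1}$ occurs only in $\E_n(u\circ F_1)$, the purely direct part of $\partial_{t_{2^n-1}t_{2^n}}$ of the right side of (\ref{equation: bottomlinerecurse}) equals $\tfrac53(-2l_n)$, which will supply the leading term $\tfrac53 l_n$.

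The essential point is that $b$ and $c$ in (\ref{equation: bottomlinerecurse}) have already been eliminated by minimization and are linear functions of $a$ and the $t_i$, so the mixed second derivative of the minimized energy is not merely this direct contribution. Writing $\E^*(t)=\E_{n+1}(b^*(t),c^*(t),t)$ and using the first-order conditions $\partial_b\E_{n+1}=\partial_c\E_{n+1}=0$, the chain rule gives
$$\partial_{t_{2^n-1}t_{2^n}}\E^* = \partial_{b,\,t_{2^n-1}}\E_{n+1}\,\partial_{t_{2^n}}b^* + \partial_{c,\,t_{2^n-1}}\E_{n+1}\,\partial_{t_{2^n}}c^* + \partial_{t_{2^n-1}t_{2^n}}\E_{n+1}.$$
Here $\partial_{c,t_{2^n-1}}\E_{n+1}=0$ because $x_{2^n-1}$ does not meet the cell $F_2$, while $\partial_{b,t_{2^n-1}}\E_{n+1}=\tfrac53(-2)(2a_n)$, and differentiating the explicit expression for $b$ in Lemma \ref{lemma:bottomlinetoppointrecurse} yields $\partial_{t_{2^n}}b^*=a_n/(1+2^{n+1}a_n)$. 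Combining these with $c^{n+1}_{2^n-1,2^n}=-\tfrac12\partial_{t_{2^n-1}t_{2^n}}\E^*$ produces
$$m_{n+1} = \frac53\,l_n + \frac{10\,a_n^{2}}{3\,(1+2^{n+1}a_n)}.$$

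Finally I would reconcile the correction term with the stated fraction. Substituting $a_n=7\cdot5^n/(3^n+6\cdot10^n)$ gives $a_n^2=49\cdot25^n/(3^n+6\cdot10^n)^2$ and $1+2^{n+1}a_n=(3^n+20\cdot10^n)/(3^n+6\cdot10^n)$, so the correction collapses to $490\cdot25^n/\big(3(3^n+6\cdot10^n)(3^n+20\cdot10^n)\big)$; using the factorizations $3^{n+1}+6\cdot10^{n+1}=3(3^n+20\cdot10^n)$ and $5\cdot3^{n+1}+9\cdot10^{n+1}=15(3^n+6\cdot10^n)$ this is exactly $294\cdot25^{n+1}/\big((3^{n+1}+6\cdot10^{n+1})(5\cdot3^{n+1}+9\cdot10^{n+1})\big)$, as claimed. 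The main obstacle is precisely this envelope correction: one must remember the indirect dependence of $\E_{n+1}$ on $t_{2^n}$ through the minimizer $b$, verify that the geometry forces all but one of the chain-rule terms to vanish, and then carry the bookkeeping through the rational expression for $a_n$; the clean factorizations above are what make the final identification possible.
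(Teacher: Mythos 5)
Your proof is correct, and it follows exactly the route the paper intends but leaves implicit: these coefficient lemmas are stated as consequences of applying $c^{n+1}_{y,z}=-\tfrac12\partial_{u(y)u(z)}\E_{n+1}$ to the recursion (\ref{equation: bottomlinerecurse}) with the minimizing $b,c$ from Lemma \ref{lemma:bottomlinetoppointrecurse}, which is precisely your envelope/chain-rule computation. Your identification of the single surviving cross term $\partial_{b\,t_{2^n-1}}\E_{n+1}\,\partial_{t_{2^n}}b^*=-\tfrac{20a_n}{3}\cdot\tfrac{a_n}{1+2^{n+1}a_n}$ and the closing algebra both check out against the stated formula.
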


\begin{Lemma}
Given $0 < i < j < 2^{n}$ we have
$$c^{n+1}_{i,j} = \frac{5}{3}c^{n}_{i,j} + 
\frac{196\cdot 25^{n+1} (10 \cdot 3^{n+1} + 39\cdot 10^{n+1})}{(3^{n+1} + 6 \cdot 10^{n+1}) (5\cdot 3^{n+1} + 16 \cdot 10^{n+1}) (5 \cdot 3^{n+1} + 9\cdot 10^{n+1})}$$ for $n \ge 1$.
\end{Lemma}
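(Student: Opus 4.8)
The plan is to read off the coefficient $c^{n+1}_{i,j}=-\tfrac12\,\partial_{t_i t_j}\E_{n+1}$ directly from the recurrence (\ref{equation: bottomlinerecurse}) after the inner junction values have been minimized out. The key structural observation is that for $0<i<j<2^n$ both points $x_i,x_j$ lie in the \emph{interior} of the bottom row of the $F_1$-cell, so in (\ref{equation: bottomlinerecurse}) they enter only through $\E_n(u\circ F_1)$. Writing that term in its quadratic form with top value $b$ and bottom values $t_0,\dots,t_{2^n}$, the variables $t_i,t_j$ appear solely via the direct term $c^n_{i,j}(t_i-t_j)^2$ and the junction terms $c^n_{q_0,x_k}(b-t_k)^2$; in particular $t_i$ never couples to $c$ nor to any $F_2$-cell datum.

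Let $\Phi(t,b,c)$ denote the right-hand side of (\ref{equation: bottomlinerecurse}) \emph{before} minimizing, so that $\E_{n+1}=\Phi(t,b^\ast(t),c^\ast(t))$ at the minimizers $b^\ast,c^\ast$. Since $\partial_b\Phi=\partial_c\Phi=0$ at the minimum, the first derivative of the minimized energy equals the partial derivative at fixed $b,c$, and the mixed second derivative acquires exactly two correction terms:
\[
\partial_{t_i}\partial_{t_j}\E_{n+1}=\partial_{t_i t_j}\Phi+(\partial_{t_i b}\Phi)(\partial_{t_j}b^\ast)+(\partial_{t_i c}\Phi)(\partial_{t_j}c^\ast).
\]
All three ingredients are furnished by Lemma \ref{lemma:bottomlinetoppointrecurse}. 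Since $t_i$ is interior to $F_1$, one has $\partial_{t_i t_j}\Phi=\tfrac53\,\partial_{t_i t_j}\E_n(u\circ F_1)=-\tfrac{10}{3}c^n_{i,j}$ and $\partial_{t_i c}\Phi=0$; using $c^n_{q_0,x_i}=2a_n$ gives $\partial_{t_i b}\Phi=-\tfrac{20}{3}a_n$; and the explicit formula for $b$ yields $\partial_{t_j}b^\ast=\tfrac{2(2+2^{n+1}a_n)a_n}{(1+2^{n+1}a_n)(3+2^{n+1}a_n)}$ for interior $j$. Assembling these and multiplying by $-\tfrac12$ produces
\[
c^{n+1}_{i,j}=\frac53\,c^n_{i,j}+\frac{20\,a_n^{2}\,(2+2^{n+1}a_n)}{3\,(1+2^{n+1}a_n)(3+2^{n+1}a_n)}.
\]

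The remaining task is to substitute $a_n=\tfrac{7\cdot5^n}{3^n+6\cdot10^n}$ and check that the correction term matches the stated rational function. Writing $D=3^n+6\cdot10^n$, I would record the clean factorizations $1+2^{n+1}a_n=(3^{n+1}+6\cdot10^{n+1})/(3D)$, $\ 3+2^{n+1}a_n=(5\cdot3^{n+1}+16\cdot10^{n+1})/(5D)$, $\ 2+2^{n+1}a_n=(10\cdot3^{n+1}+39\cdot10^{n+1})/(15D)$, together with $5\cdot3^{n+1}+9\cdot10^{n+1}=15D$ and $a_n^{2}=49\cdot25^n/D^2$; these convert the correction into precisely $\tfrac{196\cdot25^{n+1}(10\cdot3^{n+1}+39\cdot10^{n+1})}{(3^{n+1}+6\cdot10^{n+1})(5\cdot3^{n+1}+16\cdot10^{n+1})(5\cdot3^{n+1}+9\cdot10^{n+1})}$. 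I expect the genuinely delicate step to be the bookkeeping in the envelope argument—verifying that the sole correction flows through $b$ and not $c$, and that the combinatorial factor of $2$ in $\partial_{t_j}b^\ast$ (coming from the doubled interior sum in the formula for $b$) is carried correctly—while the concluding rational-function identity is routine once the above factorizations are in place.
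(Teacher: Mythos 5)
Your proposal is correct and follows exactly the route the paper prescribes: the paper states this lemma without proof, noting only that the coefficients come from examining the recursion (\ref{equation: bottomlinerecurse}) and computing $c^{n+1}_{y,z} = -\tfrac{1}{2}\partial_{u(y)u(z)}\E_{n+1}$ after minimizing out $b$ and $c$ via Lemma \ref{lemma:bottomlinetoppointrecurse}, which is precisely your envelope/chain-rule computation. Your identification of the single correction flowing through $b$ (with $\partial_{t_ic}\Phi=0$), the resulting form $\tfrac{5}{3}c^n_{i,j}+\tfrac{20a_n^2(2+2^{n+1}a_n)}{3(1+2^{n+1}a_n)(3+2^{n+1}a_n)}$, and the factorizations reducing it to the stated rational function all check out.
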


\begin{Remark}
It is easy to see that if we fix two indices $i, j$ then the asymptotic behavior of $c^n_{i,j}$ is $(\frac{5}{3})^n$. If we look at the relative position of points, instead, with $0 \leq i < 2^{n-1}$ and $ 2^{n-1} < j \leq 2^n$ we see the asymptotic behavior of $c^n_{i,j}$ is $(\frac{1}{4})^n$.  

\end{Remark}

\subsubsection{Bottom Row without Top Point}
Here we let $\tilde{c}^n_{i,j}$ be the coefficient of $(u(x_i) - u(x_j))^2$ in the minimizing form. We obtain analogous formulas for this case by noting that we minimize the energy by setting 
$$a = \frac{t_0 + 2(t_1 + \dots + t_{2^n - 1}) + t_{2^n}}{2^{n+1}}.$$
\begin{Lemma}
We have
$$\tilde{c}^n_{0,2^n} = c^n_{0,2^n} + \frac{a_n}{2^{n+1}},$$
\begin{displaymath}
\left\{ \begin{array}{lr}
\tilde{c}^n_{0,j} = c^n_{0,j} + \frac{a_n}{2^{n}} & 0 < j < 2^n,\\
\tilde{c}^n_{i,j} = c^n_{i,j} + \frac{a_n}{2^{n-1}} & 0 < i  < j < 2^n.
\end{array} \right.
\end{displaymath}
\end{Lemma}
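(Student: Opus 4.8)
The plan is to recognize that the bottom-row-without-top-point form is nothing more than the with-top-point form after the single vertex $q_0$ has been eliminated by minimization over its value $a=u(q_0)$. This is exactly the reduction analyzed in Lemma \ref{lemma:subgraph}, whose conclusion is the vertex-elimination identity (\ref{equation:conductance}). Indeed, the energy with the top point included contains the terms $a$ only through $\sum_k c^n_{q_0,x_k}\,(a-t_k)^2$, and setting the derivative in $a$ to zero gives $a=\big(\sum_k c^n_{q_0,x_k}t_k\big)\big/\big(\sum_k c^n_{q_0,x_k}\big)$; substituting the values of $c^n_{q_0,x_k}$ from Lemma \ref{lemma:bottomlinetoppointrecurse} reproduces precisely the optimal $a$ displayed just before the statement. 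So the stated formulas should drop out of (\ref{equation:conductance}) with the eliminated point taken to be $q_0$ rather than a generic $x_{n+1}$.

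Concretely, I would apply (\ref{equation:conductance}) in the form
\[
\tilde{c}^n_{i,j} = c^n_{i,j} + \frac{c^n_{q_0,x_i}\,c^n_{q_0,x_j}}{\sum_{k=0}^{2^n} c^n_{q_0,x_k}},
\]
where the correction term is the star-to-mesh contribution of removing $q_0$. The inputs are all supplied by Lemma \ref{lemma:bottomlinetoppointrecurse}: the endpoint conductances satisfy $c^n_{q_0,x_0}=c^n_{q_0,x_{2^n}}=a_n$, while the interior conductances satisfy $c^n_{q_0,x_j}=2a_n$ for $0<j<2^n$. The first computation I would carry out is the denominator,
\[
\sum_{k=0}^{2^n} c^n_{q_0,x_k} = a_n + (2^n-1)\,(2a_n) + a_n = 2^{n+1}a_n,
\]
after which the three cases are immediate substitutions.

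It then remains only to split into the three index patterns according to how many of $i,j$ are endpoints. When $\{i,j\}=\{0,2^n\}$ both numerator factors equal $a_n$, giving correction $a_n^2/(2^{n+1}a_n)=a_n/2^{n+1}$; when exactly one index is an endpoint the numerator is $a_n\cdot 2a_n$, giving $a_n/2^n$; and when both indices are interior the numerator is $(2a_n)^2$, giving $a_n/2^{n-1}$. These are exactly the three displayed formulas. I do not anticipate a genuine obstacle here, since once the problem is framed as elimination of $q_0$ the argument is purely mechanical; the only point requiring care is to confirm that (\ref{equation:conductance}) applies verbatim with the summation index ranging over \emph{all} surviving bottom-row points (so that the denominator is the full $2^{n+1}a_n$ and not a partial sum), and to make sure the endpoint-versus-interior conductance values are fed in correctly from Lemma \ref{lemma:bottomlinetoppointrecurse}.
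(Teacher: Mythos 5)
Your proposal is correct and takes essentially the same route as the paper: the paper obtains these formulas precisely by eliminating the top point via minimization over $a$ (its displayed optimal $a$ is exactly your weighted average $\sum_k c^n_{q_0,x_k}t_k / \sum_k c^n_{q_0,x_k}$), and your invocation of the vertex-elimination identity (\ref{equation:conductance}) with the eliminated point $q_0$, denominator $2^{n+1}a_n$, and the three endpoint/interior cases is just the packaged form of that same substitution.
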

\begin{Corollary}\label{corollary:farpointcoefficient}
We set $\tilde{b}_n = \tilde{c}_{0,2^n}^{n}$. Then
\begin{align*}
\tilde{c}_{0, j}^n = 2 \tilde{b}_n && 2^{n-1} < j < 2^n, \\
\tilde{c}_{j, {2^n}}^n = 2 \tilde{b}_n &&  0 < j < 2^{n-1}, \\ 
\tilde{c}_{i, j}^n = 4 \tilde{b}_n && 0 < i < 2^{n-1}, 2^{n-1} < j < 2^n .
\end{align*}
Also we have 
$$\tilde{b}_n = \frac{a_n}{2^{n+1}} + b_n.$$
More specifically,
$$\tilde{b}_n = \frac{35 \cdot 5^n}{10 \cdot 6^{n} + 32 \cdot 20^{n}}.$$

\end{Corollary}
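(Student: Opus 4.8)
The plan is to obtain everything by direct substitution from the two immediately preceding lemmas together with a single algebraic simplification; no new minimization is required. First I would establish the relation $\tilde{b}_n = \frac{a_n}{2^{n+1}} + b_n$. This is immediate: by definition $\tilde{b}_n = \tilde{c}^n_{0,2^n}$ and $b_n = c^n_{0,2^n}$, so the first line of the preceding Lemma (the correction formula $\tilde{c}^n_{0,2^n} = c^n_{0,2^n} + \frac{a_n}{2^{n+1}}$) gives the claim with no computation.

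Next I would derive the three coefficient identities by feeding the values of $c^n_{\cdot,\cdot}$ from the Lemma computing $b_n$ into the correction formulas. For $2^{n-1} < j < 2^n$ we have $\tilde{c}^n_{0,j} = c^n_{0,j} + \frac{a_n}{2^n}$ and $c^n_{0,j} = 2b_n$, so $\tilde{c}^n_{0,j} = 2b_n + \frac{a_n}{2^n} = 2\bigl(b_n + \frac{a_n}{2^{n+1}}\bigr) = 2\tilde{b}_n$. The case $\tilde{c}^n_{j,2^n} = 2\tilde{b}_n$ for $0 < j < 2^{n-1}$ follows in the same way using $c^n_{j,2^n} = 2b_n$; alternatively it is forced by the reflection symmetry $x_i \mapsto x_{2^n-i}$ of the bottom row, which fixes the minimization problem and sends $\tilde{c}^n_{j,2^n}$ to $\tilde{c}^n_{0,2^n-j}$ with $2^{n-1} < 2^n - j < 2^n$. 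For $0 < i < 2^{n-1} < j < 2^n$ I would use the interior correction $\tilde{c}^n_{i,j} = c^n_{i,j} + \frac{a_n}{2^{n-1}}$ together with $c^n_{i,j} = 4b_n$, giving $\tilde{c}^n_{i,j} = 4b_n + \frac{a_n}{2^{n-1}} = 4\bigl(b_n + \frac{a_n}{2^{n+1}}\bigr) = 4\tilde{b}_n$. In each case the factor of $2$ or $4$ arises precisely because the added term $\frac{a_n}{2^n}$ (respectively $\frac{a_n}{2^{n-1}}$) is $2$ (respectively $4$) times $\frac{a_n}{2^{n+1}}$.

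Finally I would verify the explicit closed form by substituting $a_n = \frac{7\cdot 5^n}{3^n + 6\cdot 10^n}$ and $b_n = \frac{49\cdot 25^n}{(5\cdot 3^n + 16\cdot 10^n)(3^n + 6\cdot 10^n)}$ into $\tilde{b}_n = b_n + \frac{a_n}{2^{n+1}}$, placing both terms over the common denominator $2^{n+1}(5\cdot 3^n + 16\cdot 10^n)(3^n + 6\cdot 10^n)$. The numerator collapses to $35(6\cdot 50^n + 15^n)$, and the target formula $\frac{35\cdot 5^n}{10\cdot 6^n + 32\cdot 20^n}$ is then confirmed by cross-multiplication, both sides reducing to $10\cdot 90^n + 92\cdot 300^n + 192\cdot 1000^n$. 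This last bookkeeping is the only genuinely computational step and hence the main (though entirely routine) obstacle; everything else is substitution. To keep the collapse of the numerator transparent I would record the intermediate identities $49\cdot 25^n\cdot 2^{n+1} = 98\cdot 50^n$ and $7\cdot 5^n(5\cdot 3^n + 16\cdot 10^n) = 35\cdot 15^n + 112\cdot 50^n$, so that the sum $98\cdot 50^n + 112\cdot 50^n + 35\cdot 15^n = 35(6\cdot 50^n + 15^n)$ is manifest.
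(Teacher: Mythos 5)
Your proposal is correct and follows exactly the route the paper intends: the Corollary is stated without proof precisely because it is the substitution of the coefficient values $c^n_{0,j}=2b_n$, $c^n_{j,2^n}=2b_n$, $c^n_{i,j}=4b_n$ into the correction formulas of the preceding Lemma, with the factor of $2$ or $4$ preserved because $\frac{a_n}{2^n}$ and $\frac{a_n}{2^{n-1}}$ are the corresponding multiples of $\frac{a_n}{2^{n+1}}$. Your handling of the pairs $(j,2^n)$ via reflection symmetry, and your algebraic verification of the closed form (both sides reducing to $10\cdot 90^n + 92\cdot 300^n + 192\cdot 1000^n$), are sound.
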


\begin{Remark}
Since $a_n$ grows at a rate of $(\frac{1}{2})^n$, the previous lemma shows  that we add only a term of order $(\frac{1}{4 })^n$ to the coefficients. Since all the asymptotics from the case including the top point were at least this, we get identical asymptotics.
\end{Remark}

\subsubsection{Haar Structure}
Following \cite{OS}, we prove a discrete equivalent of those results showing that the energy of the minimizer can be expressed in a specific form using Haar functions. We first introduce the notation used in that paper.

We define the  Haar functions as follows
\begin{displaymath}
   \Psi_{n,k}(t) = \left\{
     \begin{array}{lr}
       2^{n/2} &  t \in [\frac{k}{2^n},\frac{k+1/2}{2^{n}})\\
       -2^{n/2} &   t \in (\frac{k+1/2}{2^n},\frac{k+1}{2^{n}}]\\
       0		&  ~~\text{otherwise}~.
     \end{array}
   \right.
\end{displaymath} 

Given a function $u$ with values $u(x_i) = t_i$ on the bottom row of $V_m$ we define a function $f_m(u)$ on the bottom line which is a piecewise constant interpolation:
\begin{equation} \label{equ:fm}
f_m(u) = t_0 \chi_{[0,\frac{1}{2^{m+1}}]} + \sum_{i=1}^{2^m - 1}{t_i \chi_{[\frac{2i - 1}{2^{m+1}},\frac{2i + 1}{2^{m+1}}]}} + t_{2^m}\chi_{[\frac{2^{m+1} - 1}{2^{m+1}},1]}.
\end{equation}
Using the $L^2$ inner product we define 
\begin{equation}\label{equ:inner}
D^m_{n,k}(u) = <\Psi_{n,k},f_m(u)>.
\end{equation}

When there is no ambiguity about which function we are using we use $D^m_{n,k}= D^m_{n,k}(u)$. These give us weighted averages of dyadic groups of values on the bottom row. We give some examples below:
$$D^m_{0,0} = (2^{-\frac{0}{2}-1})\left(\frac{(t_0 + 2 t_1 + \dots + 2 t_{2^{m-1} - 1} + t_{2^{m-1}}) - (t_{2^{m-1}} + 2 t_{2^{m-1} + 1} + \dots + 2 t_{2^{m} -1} + t_{2^m})}{2^m}\right),$$

$$D^m_{m-1,0} = (2^{-\frac{m-1}{2}-1})\left(\frac{(t_0  + t_1) - (t_1 + t_2)}{2}\right),~~{D}^m_{m-1,1} = (2^{-\frac{m-1}{2}-1})\left(\frac{(t_2 + t_3) - (t_3 + t_4)}{2}\right),$$
$$D^m_{m,0} =(2^{-\frac{m}{2}-1}) (t_0 - t_1)~~\text{and}~~D^m_{m,1} = (2^{-\frac{m}{2}-1})(t_1 - t_2).$$
We prove the following about the energy of the minimizer:

\begin{Theorem}\label{theorem:haarstructure}

\begin{align*}
\E(u) & = \sum_{i=0}^{m}\gamma^m_i\sum_{j=0}^{2^{m-i} - 1}{(D^m_{m-i,j})^2},
\end{align*}
where $$\gamma_{0}^1 = 4,$$ $$~~~~~~~~\gamma^m_n = \frac{10}{3}\gamma^{m-1}_n~\text{ for }0 \leq n < m ~\text{and}~ m\geq 2,$$  $$\gamma^m_m = \frac{70\cdot 10^{m}}{5 \cdot 3^m + 16\cdot 10^m}~~\text{for}~ m \geq 1.$$
\end{Theorem}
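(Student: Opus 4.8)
The plan is to express the energy $\E(u)$ of the minimizer as a quadratic form in the values $t_0,\dots,t_{2^m}$ using the coefficients $\tilde{c}^m_{i,j}$ computed in the previous subsection, and then to diagonalize that quadratic form in the Haar basis $\{D^m_{n,k}\}$. The key structural fact I would exploit is the one established in Corollary \ref{corollary:farpointcoefficient} and the surrounding lemmas: the coefficients $\tilde{c}^m_{i,j}$ depend only on the \emph{relative dyadic position} of $i$ and $j$ — pairs straddling the midpoint all have the same weight ($4\tilde{b}_m$ for interior pairs, $2\tilde{b}_m$ for pairs with an endpoint), and this self-similar block structure persists at every dyadic scale. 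This is exactly the combinatorial signature of a Haar expansion: a sum of squared differences of block-averages across dyadic splits. So the first step is to write $\E(u) = \sum_{i<j}\tilde{c}^m_{i,j}(t_i-t_j)^2$ and regroup the terms according to the dyadic level at which the pair $(i,j)$ first gets separated.

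The natural way to organize this is induction on $m$ together with the self-similar energy recurrence. First I would set up the base case $m=1$ directly, checking $\gamma^1_0 = 4$ and $\gamma^1_1 = \frac{70\cdot 10}{5\cdot 3 + 16\cdot 10} = \frac{700}{175} = 4$ against the explicit level-one coefficients. For the inductive step, I would use the fundamental decomposition (\ref{equation: bottomlinerecurse}): the energy on level $n+1$ splits into an across-the-top term $(a-b)^2+(b-c)^2+(c-a)^2$ plus the energies of the two half-copies $\E_n(u\circ F_1)$ and $\E_n(u\circ F_2)$, scaled by $5/3$. After eliminating $b,c$ (and, for the no-top-point case, $a$) via the minimization already carried out in Lemma \ref{lemma:bottomlinetoppointrecurse} and Corollary \ref{corollary:farpointcoefficient}, the two half-copies contribute the finer Haar coefficients $D^m_{m-i,j}$ for $j$ in the left/right halves, while the coupling between the halves produces precisely the coarsest coefficient $D^m_{0,0}$. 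The factor $\frac{10}{3} = \frac{5}{3}\cdot 2$ in the recurrence $\gamma^m_n = \frac{10}{3}\gamma^{m-1}_n$ should emerge from the $\frac{5}{3}$ energy-scaling times the factor $2$ coming from how the Haar normalization $2^{n/2}$ and the doubling of the number of points interact under $f_m$; I would verify the bookkeeping of these powers of $2$ carefully, since the shift $n \mapsto m-i$ in the indexing is where sign and scale errors are easiest to make.

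The main obstacle will be the top-level coefficient $\gamma^m_m$, which does not follow the clean scaling rule and instead carries the arithmetic content $\frac{70\cdot 10^m}{5\cdot 3^m + 16\cdot 10^m}$. This coarsest-scale (in the $D^m_{m-i,j}$ indexing, the \emph{finest}) term corresponds to the nearest-neighbor differences $t_i - t_{i+1}$, whose coefficient is governed by $l_n$, $m_n$ and $\tilde{b}_n$ rather than by pure self-similar scaling; I expect to have to combine the explicit closed form for $\tilde{b}_n$ from Corollary \ref{corollary:farpointcoefficient} with the nearest-neighbor data to extract $\gamma^m_m$, and to check that the off-diagonal (cross-scale) terms in the Haar basis genuinely cancel. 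Establishing that the quadratic form is exactly diagonal in $\{D^m_{n,k}\}$ — i.e. that there are no cross terms $D^m_{n,k}D^m_{n',k'}$ surviving — is the crux, and it should follow from the orthogonality of the Haar functions together with the observation that $\tilde{c}^m_{i,j}$ is constant on each dyadic block, so that the quadratic form is simultaneously diagonalized by the same multiresolution splitting that defines the $D^m_{n,k}$.
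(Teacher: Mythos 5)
Your skeleton --- a direct base case at $m=1$, induction via the self-similar split (\ref{equation: bottomlinerecurse}), elimination of the auxiliary top values $a,b,c$ using Lemma \ref{lemma:bottomlinetoppointrecurse}, the half-copies supplying the fine Haar terms with factor $\frac{10}{3}=\frac{5}{3}\cdot 2$, and the coarse coefficient extracted from $\tilde b_m$ --- is essentially the paper's proof, which implements the same elimination by building the minimizer piecewise as $u\circ F_1=u_1+x\Phi_{m-1}$, $u\circ F_2=u_2+y\Phi_{m-1}$ with $x=-y=C_m(a-b)$ (Lemma \ref{Lemma:harmonicbottom}). But at the step you yourself call the crux --- why no cross terms survive --- your proposal has a genuine gap, because the structural fact you invoke is false. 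The coefficients $\tilde c^m_{i,j}$ are \emph{not} constant on dyadic blocks and do \emph{not} depend only on relative dyadic position: the paper's own recursions show the two kinds of nearest-neighbor coefficients differ, $l_n\neq m_n$ for $n\geq 2$ (compare Lemma \ref{lemma:proved} with the lemma for $m_n$ that follows it; e.g. $\tilde l_2\approx 4.034$ while $\tilde m_2 = 25/6$), and within-half coefficients satisfy $c^{n+1}_{i,j}=\frac{5}{3}c^n_{i,j}+\mathrm{const}$, not block-constancy; Corollary \ref{corollary:farpointcoefficient} gives constancy only across the \emph{topmost} split. Indeed diagonality of the form does not require block-constancy: $(D^m_{0,0})^2$ contributes to the $t_0t_1$ cross term but not to $t_1t_2$, which is exactly how $l_n\neq m_n$ coexists with the theorem. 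What actually closes the induction is the weighted-average structure in Lemma \ref{lemma:bottomlinetoppointrecurse}: the conductances from the top vertex to the bottom row are proportional to $(1,2,2,\dots,2,1)$, precisely the weights defining $f_m$. Hence the top-point form, as a function of its top value $b$, equals the no-top-point form plus $\kappa_n(b-A)^2$ with $A$ the weighted average of the bottom values, so after minimizing over $a,b,c$ the coupling between the halves depends only on $A_1-A_2=2D^m_{0,0}(u)$ and is therefore a pure multiple of $(D^m_{0,0})^2$. In the paper this is the polarization computation $4\E(u_l,\Phi_{m-1})=(2^m-4)a_{m-1}\int u_l\,d\mu$; your outline contains no substitute for this argument.

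A second, smaller problem: your base-case check fails as stated. The level-one energy is $\frac{5}{2}\bigl((t_0-t_1)^2+(t_1-t_2)^2\bigr)+\frac{1}{4}(t_0-t_2)^2$, giving coefficient $20$ on $(D^1_{1,0})^2+(D^1_{1,1})^2$ and $4$ on $(D^1_{0,0})^2$; so $\gamma^1_0=20$, not $4$. The printed $\gamma^1_0=4$ is a typo in the statement (only $\gamma^1_1=4$ holds, as your closed-form evaluation shows), and the recursion for $\gamma^m_0$ must be seeded with $20$, yielding $\gamma^m_0=6\left(\frac{10}{3}\right)^m$ --- this is the value consistent with the explicit coefficient lemmas. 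Relatedly, you invert the indexing when you say $\gamma^m_m$ "corresponds to the nearest-neighbor differences": the $i=m$ summand is $\gamma^m_m(D^m_{0,0})^2$, the \emph{coarsest} term, so $\gamma^m_m$ has nothing to do with $l_n$ or $m_n$; it is pinned down by the single far-point coefficient, since $D^m_{0,0}$ is the only Haar coefficient involving both $t_0$ and $t_{2^m}$, giving $\gamma^m_m=4^{m+1}\tilde b_m$, which is the stated closed form.
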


\begin{Remark}
We see that $\gamma_i^m \approx \left(\frac{10}{3}\right)^{m-i}$ and $\lim_{m\to\infty} \gamma_{m}^m=\frac{35}{8}.$ This is a discrete analog of Theorem 2.4 of \cite{OS}.
\end{Remark}
We start off showing some other properties of the minimizer. We begin by creating a sort of bump function which allows us to vary our functions harmonically away from the bottom row and the top point  while keeping our initial conditions on bottom row. When we refer to the bottom row throughout this section, we mean the subset of $V_m$ on the bottom row for the appropriate $V_m$ in the context.\\

\begin{Lemma}\label{lemma:integralharmonicextension} We define $\Phi_m$ to be the unique function for which $\Phi_m = 0$ on the bottom row of $V_m$, $\Phi_m(q_0) = 1$ and $\Phi_m$ is harmonic away from the bottom row of $V_m$ and the top point. Set 
$$~~~~~~~~b_m = \partial_n \Phi_m(q_0)~~\text{and}$$ 
$$c_m = \Phi_m(\frac{q_0 + q_1}{2}) = \Phi_m(\frac{q_0 + q_2}{2}).$$

Then for $m \ge 1$ we have 
$$~~~~~~~~b_m = \frac{14\cdot 10^m}{3^m + 6\cdot 10^m}~~\text{and}$$
$$c_m = \frac{5\cdot 3^m + 9 \cdot 10^m}{5\cdot 3^m + 30 \cdot 10^m}.$$
\end{Lemma}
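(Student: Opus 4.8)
The plan is to set up a two--term recursion in $m$ from the self--similar decomposition $SG=F_0(SG)\cup F_1(SG)\cup F_2(SG)$, exploiting the uniqueness of the energy minimizer (Lemma \ref{lemma:construction} with $\lambda=0$, of which $\Phi_m$ is an instance). First observe that $\Phi_m$ is invariant under the reflection interchanging $q_1$ and $q_2$: this reflection fixes all the data (bottom row $\equiv 0$, $q_0\mapsto 1$) and the harmonicity domain, so uniqueness forces the symmetry. This is exactly what makes $c_m=\Phi_m(\tfrac{q_0+q_1}{2})=\Phi_m(\tfrac{q_0+q_2}{2})$ well defined, and it also shows $\Phi_m\circ F_1$ and $\Phi_m\circ F_2$ coincide. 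Writing $p_1=\tfrac{q_0+q_1}{2}=F_0(q_1)$ and $p_2=\tfrac{q_0+q_2}{2}=F_0(q_2)$, the restriction of $\Phi_m$ to the top cell $F_0(SG)$ is harmonic with boundary values $1$ at $q_0$ and $c_m$ at $p_1,p_2$.

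The first step is to eliminate $b_m$ in favor of $c_m$ using this top cell. The normal derivative of a single harmonic function with corner values $(h_0,h_1,h_2)$ is $2h_0-h_1-h_2$, and passing from the whole gasket to the subcell $F_0(SG)$ multiplies the normal derivative at $q_0$ by $\tfrac{5}{3}$ (both standard, see \cite{Str}). Hence $b_m=\partial_n\Phi_m(q_0)=\tfrac{5}{3}(2\cdot 1-c_m-c_m)=\tfrac{10}{3}(1-c_m)$ for all $m\ge 1$, which is one of the two asserted identities once $c_m$ is known.

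The second step is the self--similar identity $\Phi_m\circ F_1=c_m\,\Phi_{m-1}$ for $m\ge 2$. I would check that $\psi:=\Phi_m\circ F_1$ vanishes on the bottom row of $V_{m-1}$ (since $F_1$ carries the bottom edge of $SG$ onto the bottom edge of $F_1(SG)$, and the bottom--row points of $V_m$ there pull back to the bottom row of $V_{m-1}$), takes the value $c_m$ at $q_0$ (because $F_1(q_0)=p_1$), and is harmonic precisely off $\{q_0\}\cup(\text{bottom row of }V_{m-1})$; uniqueness then gives $\psi=c_m\Phi_{m-1}$. This is the step I expect to require the most care: one must verify that $\psi$ is \emph{not} forced to be harmonic at $q_0$. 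Although $\Phi_m$ is harmonic at the junction $p_1=F_1(q_0)$, harmonicity there is a two--cell condition on the Laplacian, whereas $\Phi_m|_{F_1(SG)}$, viewed through $F_1$, records only a one--sided normal derivative at $q_0$; thus $\psi$ inherits exactly the harmonicity domain defining $\Phi_{m-1}$ and no more.

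The third step couples the two cells at $p_1$. Harmonicity of $\Phi_m$ at the junction $p_1$ is the matching condition that the two one--sided normal derivatives sum to zero. From the top cell the contribution is $\tfrac{5}{3}(2c_m-1-c_m)=\tfrac{5}{3}(c_m-1)$, while from $F_1(SG)$, using $\Phi_m|_{F_1(SG)}=c_m(\Phi_{m-1}\circ F_1^{-1})$ and $p_1=F_1(q_0)$, it is $\tfrac{5}{3}c_m\,b_{m-1}$. Setting the sum to zero gives $c_m(1+b_{m-1})=1$, and substituting $b_{m-1}=\tfrac{10}{3}(1-c_{m-1})$ yields the Möbius recursion $c_m=\dfrac{3}{13-10c_{m-1}}$ for $m\ge 2$. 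Finally I would fix the base case $m=1$ by a direct $V_1$ computation: imposing graph--harmonicity at $p_1$ (whose $V_1$--neighbors are $q_0$ with value $1$, $p_2$ with value $c_1$, and $q_1,\tfrac{q_1+q_2}{2}$ with value $0$) gives $c_1=\tfrac14(1+c_1)$, so $c_1=\tfrac13$ and $b_1=\tfrac{20}{9}$. Solving the recursion --- equivalently diagonalizing $\left(\begin{smallmatrix}0&3\\-10&13\end{smallmatrix}\right)$, whose eigenvalues are $3$ and $10$ (which is the source of the $3^m$ and $10^m$ terms) --- produces $c_m=\dfrac{3^{m-1}+6\cdot 10^{m-1}}{3^{m-1}+20\cdot 10^{m-1}}$, which simplifies to the asserted closed form, and $b_m=\tfrac{10}{3}(1-c_m)$ then gives the asserted $b_m$. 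In the write--up one may bypass solving the recursion and instead verify by induction that the two stated formulas satisfy $b_m=\tfrac{10}{3}(1-c_m)$, $c_m=\tfrac{3}{13-10c_{m-1}}$, and the base case.
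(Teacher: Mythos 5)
Your proposal is correct and follows essentially the same route as the paper's proof: use harmonicity of $\Phi_m\circ F_0$ to get $b_m=\tfrac{5}{3}(2-2c_m)$, the self-similar identity $\Phi_m\circ F_1=c_m\Phi_{m-1}$ together with matching normal derivatives at $\tfrac{q_0+q_1}{2}$ to get the Möbius recursion $c_m=\tfrac{3}{13-10c_{m-1}}$, and the base case $c_1=\tfrac13$ to solve for the closed forms. The extra details you supply (the reflection-symmetry argument, the explicit $V_1$ computation, and the eigenvalue explanation of the $3^m$ and $10^m$ terms) are exactly the steps the paper leaves implicit, and they check out.
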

\begin{proof}
Because $\Phi_m \circ F_0$ is harmonic we have $$b_m = \frac{5}{3}(2 - 2 c_m).$$
Furthermore we can combine the matching normal derivative conditions at $\frac{q_0 + q_1}{2}$ and the fact that $\Phi_m \circ F_1 = c_m \Phi_{m-1}$ to get
$$(\frac{5}{3})^2(2 c_m - 2 c_m c_{m-1}) + (\frac{5}{3})(2c_m - c_m - 1) = 0.$$
Thus we get the recurrence
$$c_m =\frac{-3}{-13 + 10 c_{m-1}}$$ and since $c_1=\frac{1}{3}$ so
 we can solve to get the stated closed form for both $c_m$ and $b_m$.
\end{proof}

Now we find the exact weighted average of the values on the bottom row and discover some other interesting properties of the minimizer.

\begin{Lemma} \label{Lemma:harmonicbottom}
Let $u$ be a function harmonic away from the bottom row of $V_m$. Then 
$\partial _nu(q_0)=0, u(q_0) = \int_{SG}{u d\mu}$ and
\begin{equation}\label{equation:bottomlineintegral}
\int_{\text{SG}}u d\mu=\frac{1}{2^{m+1}}\left(u(x_0)+2u(x_1) + \dots + 2u(x_{2^m - 1}) + u(x_{2^m})\right)=u(q_0).
\end{equation}

\end{Lemma}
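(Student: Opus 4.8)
The plan is to read off both outer equalities from the single fact that the apex $q_0$, lying off the bottom row, is an \emph{unconstrained} node of the minimization. Since $\lambda=0$ and $u$ is harmonic on the complement of the finite set $E=\{x_0,\dots,x_{2^m}\}$ with no value prescribed at $q_0$, the function $u$ is the energy minimizer among all functions with the given bottom-row data, and its apex value is free. As in the proof of Lemma~\ref{lemma:subgraph}, minimizing the conductance form $\mathcal E(u)=\sum_{\{x,y\}\subset\{q_0\}\cup E}c^m_{x,y}\,(u(x)-u(y))^2$ over $a=u(q_0)$ gives the weighted-average value $u(q_0)=\frac{\sum_j c^m_{q_0,x_j}\,u(x_j)}{\sum_j c^m_{q_0,x_j}}$, while the stationarity $\partial_a\mathcal E=0$ is, by the Gauss--Green formula, exactly $\partial_n u(q_0)=0$ (only the top cell $F_0(SG)$ meets $q_0$, and the other boundary vertices $q_1=x_0$, $q_2=x_{2^m}$ sit on the bottom row). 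Inserting the conductances already found in Lemma~\ref{lemma:bottomlinetoppointrecurse}, namely $c^m_{q_0,x_0}=c^m_{q_0,x_{2^m}}=a_m$ and $c^m_{q_0,x_j}=2a_m$ for $0<j<2^m$, the denominator is $2^{m+1}a_m$ and the weights collapse to $1/2^{m+1}$ at the endpoints and $1/2^m$ in the interior; this is exactly $u(q_0)=\frac{1}{2^{m+1}}\big(u(x_0)+2\sum_{i=1}^{2^m-1}u(x_i)+u(x_{2^m})\big)$.

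It remains to prove $\int_{SG}u\,d\mu=u(q_0)$. I would do this by a Gauss--Green duality against an auxiliary function $\Theta$ solving $\Delta_\mu\Theta=1$ with $\Theta\equiv 0$ on the whole bottom row. Computing $\mathcal E(u,\Theta)$ in two ways, once with the Laplacian on $u$ and once on $\Theta$, gives in the first evaluation $0$ — because $\Theta=0$ annihilates the singular part of $\Delta u$ (which is carried entirely by the bottom row) and $\partial_n u(q_0)=0$ kills the only surviving boundary term — and in the second evaluation $-\int_{SG}u\,d\mu+\sum_{q\in V_0}u(q)\,\partial_n\Theta(q)$. Equating the two yields $\int_{SG}u\,d\mu=\sum_{q\in V_0}u(q)\,\partial_n\Theta(q)$, and since $q_1,q_2$ are the bottom-row endpoints the desired identity $\int u\,d\mu=u(q_0)$ follows as soon as $\Theta$ is normalized so that its boundary fluxes are $\partial_n\Theta(q_0)=1$ and $\partial_n\Theta(q_1)=\partial_n\Theta(q_2)=0$. (As a consistency check, the global relation $\int\Delta_\mu\Theta\,d\mu=\sum_{V_0}\partial_n\Theta$ reads $1=1$.)

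The main obstacle is precisely this last claim about $\Theta$: the existence of such a function and, above all, the vanishing of its fluxes at the two bottom corners. The tempting shortcut — inducting on $m$ through the self-similar split into $F_0,F_1,F_2$ — does \emph{not} close, because the restrictions $u\circ F_1,u\circ F_2$ are again harmonic away from the bottom row of $V_{m-1}$ but lose the condition $\partial_n(\cdot)(q_0)=0$ at their apex (the two apex fluxes are linked only through the interior matching condition at the junction point), and one checks that no finite list of aggregate data survives subdivision. I would instead compute the fluxes of $\Theta$ by the same self-similar recursion used for $\Phi_m$ in Lemma~\ref{lemma:integralharmonicextension} — writing $\Theta$ as a particular solution of $\Delta_\mu\Theta=1$ vanishing on the bottom row plus a multiple of $\Phi_m$ chosen to set $\partial_n\Theta(q_0)=1$ — where the symmetry $q_1\leftrightarrow q_2$ gives $\partial_n\Theta(q_1)=\partial_n\Theta(q_2)$ immediately and a short level recursion (parallel to the closed forms for $b_m,c_m$) forces this common value to $0$. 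Conceptually the whole statement says that the harmonic measure of the bottom row seen from $q_0$ and its $\mu$-average over $SG$ are both the trapezoidal weights, which is exactly why apex value, average, and quadrature coincide.
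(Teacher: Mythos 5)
Your first half is correct, and it is a genuinely different route from the paper's. The paper proves all three assertions simultaneously by induction on $m$: it builds the extension as $u\circ F_1=u_1+x\Phi_{m-1}$, $u\circ F_2=u_2+y\Phi_{m-1}$ with a harmonic top cell, solves the gluing conditions for $x,y$ (finding $x+y=0$), and reads off $\partial_n u(q_0)=0$, $u(q_0)=\tfrac{a+b}{2}=\int u\,d\mu$ from that construction. In particular, your claim that the self-similar induction ``does not close'' is mistaken: the loss of the apex Neumann condition under subdivision is exactly what the correction terms $x\Phi_{m-1}$, $y\Phi_{m-1}$ are there to absorb. Your alternative derivation of the first two assertions --- identify $u$ as the energy minimizer with free apex value, minimize the quadratic form of Lemma \ref{lemma:bottomlinetoppointrecurse} over $a=u(q_0)$, and read off the weights $a_m:2a_m:\dots:2a_m:a_m$ --- is sound and non-circular, since that lemma is proved earlier and independently; stationarity in $a$ is indeed equivalent to $\E(u,\Phi_m)=\partial_n u(q_0)=0$.

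The genuine gap is in the second half, exactly where you flagged it. For your second Gauss--Green evaluation to read $-\int u\,d\mu+\sum_{V_0}u(q)\,\partial_n\Theta(q)$ with no other terms, $\Theta$ must lie in $\dom_{L^2}\Delta_\mu$ with $\Delta_\mu\Theta=1$ holding \emph{across} every bottom-row vertex (no singular part), vanish at all $2^m+1$ bottom points, and have corner fluxes $0$; vanishing on the bottom row alone is $2^m+1$ linear conditions on the three-parameter family $\{\Theta_0+h:h\ \text{harmonic}\}$, so existence is far from automatic and is the entire difficulty. Moreover, your proposed construction cannot supply such a $\Theta$: $\Phi_m$ is harmonic away from the bottom row \emph{and the top point}, so its distributional Laplacian has nonzero normal-derivative jumps at every bottom-row vertex, and adding $c\Phi_m$ with $c\neq0$ throws $\Theta$ out of $\dom_{L^2}\Delta_\mu$ and reinstates, in the second evaluation, jump terms $c\sum_x u(x)\,j_x(\Phi_m)$ (where $j_x$ denotes the sum of normal derivatives at $x$ over the cells meeting there), which do not vanish for general data. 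The strategy can be completed, but with a different, $m$-independent $\Theta$: take $\Theta=\Theta_0+\tfrac13 h_0$, where $\Theta_0$ is the Dirichlet solution of $\Delta_\mu\Theta_0=1$. Equation (\ref{matrixeq}) gives $\partial_n\Theta_0(q_i)=\tfrac13$, hence $\partial_n\Theta(q_0)=\tfrac13+\tfrac23=1$ and $\partial_n\Theta(q_1)=\partial_n\Theta(q_2)=\tfrac13-\tfrac13=0$; matching normal derivatives at the bottom midpoint gives $\Theta_0(F_1q_2)=-\tfrac1{15}$, and comparing Dirichlet data of solutions of $\Delta_\mu v=\tfrac15$ on a cell yields the self-similar identities $\Theta\circ F_1=\Theta\circ F_2=\tfrac15\Theta$, which force $\sup_I|\Theta|=\tfrac15\sup_I|\Theta|$ on the bottom edge $I$, i.e.\ $\Theta\equiv0$ on $I$ and in particular on every bottom row. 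With this $\Theta$ (no $\Phi_m$ correction is needed, and none is allowed) your duality argument closes, and it then proves the lemma for all $m$ at once --- indeed for any $u$ harmonic away from the whole bottom line.
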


\begin{proof}
We prove this by induction. For the base case we look at $V_0$ and we see that $$u(q_0) = \frac{1}{2}(u(q_1) + u(q_2))$$ which implies that $\partial_nu(q_0) = 0$ and (\ref{equation:bottomlineintegral}) is satisfied trivially. \\

Now suppose we have the result for $V_{m-1}$. We use the notation $u(x_i) = t_i$ for the values on the bottom row. Let $u_1$ be the function on $V_{m-1}$ with prescribed values $u_1(x_i) = t_i$ for $0 \leq i \leq 2^{m-1}$ with $\partial_n u_1(q_0) = 0$ which is harmonic away from the bottom row of $V_{m-1}$. Similarly let $u_2$ be the function with $u_2(x_i) = t_{2^{m-1} + i}$ for $0 \leq i \leq 2^{m-1}$ which also satisfies $\partial_n u_2(q_0) = 0$ and is harmonic away from the bottom row of $V_{m-1}$. We set 
$$a = \int_{SG} u_1 d \mu,$$
$$b = \int_{SG} u_2 d \mu$$
and define a function $u$ piecewise on $V_1$ by:
$$u \circ F_1 = u_1 + x \Phi_{m-1},$$
$$u \circ F_2 = u_2 + y \Phi_{m-1},$$ 
\begin{equation} \label{equation:uf0}
u \circ F_0 = \left(\frac{a+b+x+y}{2}\right) h_0 + (a+ x) h_1 + (b+y) h_2.
\end{equation}
In order for this to be harmonic away from the bottom row, we must have the proper gluing conditions. Thus we set out to pick $x,y$ to meet these conditions. From the inductive hypothesis we see we get continuity at $\frac{q_0 + q_1}{2}$ and $\frac{q_0 + q_2}{2}$ by
$$~~~~~~~~u \circ F_1(q_0) = u_1(q_0) + x = \int_{SG}{u_1 d \mu} + x = v \circ F_0(q_1)~~\text{and}$$ 
$$u \circ F_2(q_0) = u_2(q_0) + y = \int_{SG}{u_2 d \mu} + y = v \circ F_0(q_2).$$
All that remains to show is matching normal derivatives:
$$~~~~~~~~\partial_n (u \circ F_1)(q_0) + \partial_n (u \circ F_0) (q_1) = 0 ~~\text{and}$$ 
$$\partial_n (u \circ F_2)(q_0) + \partial_n (u \circ F_0) (q_2) = 0 .$$
This gives us a system of linear equations. Solving it we get
\begin{equation}\label{equation:matrix}
\left(\begin{matrix} b_{m-1} + \frac{3}{2} & -\frac{3}{2} \\
-\frac{3}{2} & b_{m-1} + \frac{3}{2} \end{matrix} \right) \left( \begin{matrix} x \\ y \end{matrix} \right) = \frac{3}{2}(a-b) \left(\begin{matrix} 1 \\ -1 \end{matrix} \right).
\end{equation}
From Lemma \ref{lemma:integralharmonicextension} we know $b_{m-1} > 0$ , so we can solve this system. Specifically, we have $x+y = 0$ and $x= C_m(a-b)$, $y=-C_m(a-b)$ for some constant $C_m$.
Thus we have found a unique function which is harmonic away from the bottom row with specified values. Note from our set up that $$\partial_n u(q_0) = \frac{5}{3}\left(2(u \circ F_0)(q_0) - (u \circ F_0)(q_1) - (u \circ F_0)(q_2)\right) = 0.$$ 
Now we see
$$\int_{SG}{u d\mu} = \frac{1}{3}\sum_i{\int_{SG}{u \circ F_i d \mu}} $$
$$ =\frac{1}{3}(a + b + \frac{1}{3}(a+x + b+ y+ \frac{a+b+x+y}{2}))$$
$$ = \frac{a+b}{2} = u(q_0).$$
Thus we have all the stated properties.

\end{proof}

\begin{Remark}
One can show in fact $F_0^n(\frac{q_1 + q_2}{2}) = \int u d \mu$ for all $n >0$. Since we see from above $x = -y = C_m(a-b)$ for some constant $C_m$, we also get $\E(u \circ F_0) = \tilde{C}_m (a-b)^2$ for some $\tilde{C}$.
\end{Remark}

Using the same set up as the previous lemma, we obtain an inductive argument to prove Theorem \ref{theorem:haarstructure}.
\begin{proof}[Proof of Theorem \ref{theorem:haarstructure}]

For $m=1$, namely for $V_1$,  we obtain
\[
\E(u) = C_1 \left( (t_0 - t_1)^2 + (t_1 - t_2)^2\right) + C_2(t_0 - t_2)^2.
\]
A direct calculation shows that the energy of the function that harmonically extends the values $t_0, t_1, t_2$ on the bottom row of $V_1$ has $C_1 = \frac{5}{2}$ and $C_2 = \frac{1}{4}$,
where 

\begin{align*}
(t_1 - t_2)^2 &= 8 (D_{1,1}^1)^2 \\
(t_0 - t_1)^2 &= 8 (D_{1,0}^1)^2 \\
(t_0 - t_1)^2 &= 16 (D_{0,0}^1)^2.
\end{align*}

Therefore in this case, we obtain $\gamma_0^1 = 4, \gamma_{1}^1 =  20$.
Now, we suppose it holds in the case $V_{m-1}$ and prove it for $V_{m}$:
From the self similar identity and the proof of Lemma \ref{Lemma:harmonicbottom}, we get
\begin{align} \label{equ:Eu}
\E(u) &= \frac{5}{3}\left(\E(u \circ F_0) + \E(u \circ F_1) + \E(u \circ F_2)\right) \nonumber \\
&=\frac{5}{3}\left(\E(u \circ F_0) + \E(u_1 + x \Phi_{m-1}) + \E(u_2 + y \Phi_{m-1}) \right) \nonumber \\
&=\frac{5}{3} \left( \E(u_1) + \E(u_2) + \E(u \circ F_0) + 2 x \E(u_1,\Phi_{m-1}) \right. \nonumber\\
&\left. + x^2 \E(\Phi_{m-1}) + 2 y \E(u_2,\Phi_{m-1}) + y^2 \E(\Phi_{m-1}) \right).
\end{align}

From the induction hypothesis, we have 
$\E(u_l) = \sum_{i=0}^{m-1}{\gamma}^{m-1}_i\sum_{j=0}^{2^{m-i} - 1}{ {D}^{m-1}_{m-1-i,j}(u_l)^2}$ for $l=1,2$. Also by equations (\ref{equ:fm}) and (\ref{equ:inner}) we have 
\begin{align}
D^{m-1}_{m-1-i,j}(u_1)&=2^{\frac{1}{2}}D^m_{m-i,j}(u) \label{equ1:D}\\
D^{m-1}_{m-1-i,j}(u_2)&=2^{\frac{1}{2}}D^m_{m-i,j+2^{m-1-i}}(u) \label{equ2:D}.
\end{align}

 We want to obtain $\gamma_{0,n}^m$ for $0 \leq n < m$. If we plug the equations (\ref{equ1:D}) and (\ref{equ2:D}) in $\E(u_1)+ \E(u_2)$ we obtain

\begin{align} \label{equ:E1plusE2}
\E(u_1)+ \E(u_2) &= \sum_{i=0}^{m-1}{\gamma}^{m-1}_i\sum_{j=0}^{2^{m-1-i} - 1}{{D}^{m-1}_{m-1-i,j}(u_1)^2}+\sum_{i=0}^{m-1}{\gamma}^{m-1}_i\sum_{j=0}^{2^{m-1-i} - 1}{{D}^{m-1}_{m-1-i,j}(u_2)^2}\\ &= 2\sum_{i=0}^{m-1}{\gamma}^{m-1}_i\sum_{j=0}^{2^{m-i} - 1}{{D}^m_{m-i,j}(u)^2}.
\end{align}
Now, what we want to show is the remaining part in equation (\ref{equ:Eu}) is a multiple of  $D^m_{0,0}(u)^2.$
We set $a = u_1(q_0)$ and $b = u_2(q_0)$. By the right side of equation (\ref{equation:bottomlineintegral}) we have
\begin{align*}
(2^{-2})(a-b)^2 &= (2^{-2})4^{-m}\big((t_0 + 2 t_1 + \dots + 2 t_{2^{m-1} - 1} + t_{2^{m-1}}) - (t_{2^{m-1}} + 2 t_{2^{m-1} + 1} + \dots + 2 t_{2^{m} -1} + t_{2^m})\big)^2\\
&=D^m_{0,0}(u)^2.
\end{align*}
Thus we wish to show first that
$$\frac{5}{3}\left(\E(u \circ F_0) + 2 x \E(u_1,\Phi_{m-1}) + 2 y \E(u_2,\Phi_{m-1}) + (x^2 + y^2) \E(\Phi_m)\right) = k(a-b)^2$$
for some constant $k$ which only depends on $m$. \\

From equations (\ref{equation:uf0}) and (\ref{equation:matrix}) , we have that $\E(u \circ F_0) = \tilde{C}_m (a-b)^2$ and also $x^2 + y^2 = 2 C_m^2(a-b)^2$ where $C_m,\tilde{C_m}$ only depend on $m$. Thus all that remains is to show 
\begin{align*}
2 x \E(u_1,\Phi_{m-1}) + 2 y \E(u_2,\Phi_{m-1}) &= 2 C_m (a-b) \left( \E(u_1,\Phi_{m-1}) - \E(u_2,\Phi_{m-1}) \right) \\
\end{align*} is a multiple of $(a-b)^2$.
By the polarization identity, we get
\begin{align*}
\E(u_1,\Phi_{m-1}) &= \frac{1}{4}\left(\E(u_1 + \Phi_{m-1}) - \E(u_1 - \Phi_{m-1})\right). \\
\end{align*}
For $l=1,2$, $u_l \pm \Phi_{m-1}$  are harmonic away from the bottom row and the top point. They agree on the bottom row so we have a lot of cancellation. We let $a_i$ denote the coefficients from Lemma \ref{lemma:bottomlinetoppointrecurse}. Then,
\begin{align*}
4 \E(u_1,\Phi_{m-1}) &= \left(\E(u_1 + \Phi_{m-1}) - \E(u_1 - \Phi_{m-1})\right) \\
&=a_{m-1}(a+1 - t_0)^2 + 2a_{m-1} \left(\sum_{i=1}^{2^{m-1} -1}(a+1 - t_i)^2\right) +a_{m-1}(a+1 - t_{2^{m-1}})^2 \\
&-\left(a_{m-1}(a-1 - t_0)^2 + 2a_{m-1}\left( \sum_{i=1}^{2^{m-1} -1}(a-1 - t_i)^2\right) +a_{m-1}(a-1 - t_{2^{m-1}})^2\right)\\
&=4a_{m-1}(a - t_0) + 8a_{m-1} \left(\sum_{i=1}^{2^{m-1} -1}(a - t_i)\right) + 4a_{m-1}(a - t_{2^{m-1}}) \\
&= 2^n a_{m-1} a - 4a_{m-1}\left(t_0 + 2 \left(\sum_{i=1}^{2^{m-1} -1}t_i\right) + t_{2^{m-1}}\right)\\
&= (2^m a_{m-1} - 4 a_{m-1})a.
\end{align*}
Similarly, we get 
$$4 \E(u_2,\Phi_{m-1}) = (2^m a_{m-1} - 4 a_{m-1})b.$$
Thus we have 
\begin{align*}
2 x \E(u_1,\Phi_{m-1}) + 2 y \E(u_2,\Phi_{m-1}) &= 2 C_m (a-b) \left( \E(u_1,\Phi_m) - \E(u_2,\Phi_m) \right) \\
&=\tilde{k} (a-b)^2.
\end{align*}
So, we can write
\begin{equation}
\E(u) = \frac{5}{3} \left( 2\sum_{i=0}^{m-1}{\gamma}^{m-1}_i\sum_{j=0}^{2^{m-i} - 1}{{D}^m_{m-i,j}(u)^2} + 4 \tilde{k} D_{0,0}^m(u)^2 	\right)
\end{equation}
Since $D_{0,0}^m(u)^2$ is the only term that includes $t_0$ and $t_{2^m}$, we have $-2 c_{x_0, x_{2^m}} = \partial_{t_0, t_{2^m}} \E = \frac{-10}{3} 4^{-m} \tilde{k}$. We note that $4^{-m-1}\gamma^m_m = \frac{-1}{2}\partial_{t_0,t_{2^m}}\E$ and from Corollary \ref{corollary:farpointcoefficient} we get the stated closed form.
\end{proof}

\section{Laplacian}
Given a positive, finite measure $\zeta$ on $SG$ we now define a functional $T_\zeta(u) =  \int (\Delta_\zeta u)^2 d \zeta$ on the space $\dom_{L^2}\Delta_\zeta.$ We first consider the constraints $u(x_i) = a_i$ and $\partial_n u(y_j) = b_j$ for all $x_i \in E$ and $y_i \in F$, with $E,F$ finite subsets of $V_m$. We need the following assumption on $E$:
\begin{Assumption}
There exist 3 points $x_1, x_2, x_3 \in E$ such that $$H=\begin{pmatrix}
 h_0(x_1)&h_1(x_1)  &h_2(x_1) \\ 
 h_0(x_2)&h_1(x_2)  &h_2(x_2)\\ 
 h_0(x_3)&h_1(x_3)  &h_2(x_3)
\end{pmatrix}$$ is invertible, where $h_j$ denotes the harmonic function with boundary values $h_j(q_i) = \delta_{ij}$.
\end{Assumption} 
We see from the theory of harmonic coordinates on the Sierpinski Gasket \cite{Ki03} that this is equivalent to guaranteeing $E$ is not contained in a line in the harmonic Sierpinski Gasket. In the situation where $E$ is contained on a line we can see that we should not expect uniqueness. For example, there is a one dimensional space of skew symmetric harmonic functions which vanish on the line through the middle of $SG$. See \cite{LS} for a complete discussion of this example.

\begin{Theorem}\label{theorem:laplacian1}
(Existence and Uniqueness) Set $$Y = \{u \in \dom_{L^2}\Delta_\zeta ~|~ u(x_i) = a_i\}.$$ Then we have a unique minimizer in $Y$ minimizing $T_\zeta$.
\end{Theorem}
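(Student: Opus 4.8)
The plan is to mirror the structure already used in Section 2 for the energy functional: realize the minimization as a norm-minimization over a closed convex set in a Hilbert space, and then invoke Theorem \ref{hilbertconvexminimum}. First I would equip $\dom_{L^2}\Delta_\zeta$ with an inner product that makes $T_\zeta$ (or a suitable augmentation of it) into a squared norm. The natural candidate is $\langle u,v\rangle = \int (\Delta_\zeta u)(\Delta_\zeta v)\, d\zeta + (\text{boundary/point terms})$, where the lower-order terms are needed because $T_\zeta$ alone is only a seminorm: its kernel consists of the harmonic functions (solutions of $\Delta_\zeta u = 0$), a three-dimensional space. The Assumption on $E$ is precisely what rules out nontrivial elements of this kernel lying in the constraint set's difference space, so I expect it to enter exactly at the point where I verify the form is positive definite (hence a genuine norm) on the relevant subspace.

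The key steps, in order, would be: (1) Observe $Y$ is convex and nonempty (nonempty because one can always interpolate the finitely many values $a_i$ by, say, a piecewise harmonic function in $\dom_{L^2}\Delta_\zeta$). (2) Define the seminorm $\|u\|^2 = T_\zeta(u)$ and identify its kernel as the harmonic functions. (3) Using the Assumption, show that no nonzero harmonic function can be the difference of two elements of $Y$ that also agree at the three distinguished points $x_1,x_2,x_3$; more precisely, pass to the quotient space $\dom_{L^2}\Delta_\zeta / \mathcal{H}$, where $\mathcal{H}$ is the space of harmonic functions, on which $T_\zeta^{1/2}$ becomes a genuine norm. The invertibility of $H$ guarantees that fixing the three values $a_1,a_2,a_3$ pins down a unique representative of each harmonic coset, so the image $\tilde Y$ of $Y$ in the quotient inherits closedness. (4) Verify $\tilde Y$ is closed: take $u_n \to u$ in the $T_\zeta$-norm with $u_n \in \tilde Y$; one needs the point-evaluation constraints $u(x_i)=a_i$ to survive the limit. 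This requires an estimate controlling $\|u\|_\infty$ (or pointwise values) by $T_\zeta(u)$ modulo harmonic functions — the analog of Lemma \ref{lemma:supenergy} but one order higher, obtainable because $\Delta_\zeta u \in L^2$ forces $u \in \dom\E$ with energy controlled by $T_\zeta(u)$, after which Lemma \ref{lemma:supenergy} applies. (5) Apply Theorem \ref{hilbertconvexminimum} to get a unique norm-minimizer $\tilde y \in \tilde Y$, and lift it back using the invertibility of $H$ to a unique minimizer $y \in Y$.

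The main obstacle I anticipate is step (4) together with the careful handling of the quotient in step (3): one must show that the constraints that are \emph{not} among the three distinguished points are compatible with closedness, and that the harmonic-coset representative chosen by the three distinguished values is continuous in the $T_\zeta$-topology. Concretely, the delicate point is establishing a Sobolev-type inequality of the form $\|u - Pu\|_\infty \le C\, T_\zeta(u)^{1/2}$, where $Pu$ is the harmonic interpolant matching $u$ at $x_1,x_2,x_3$; this is where the geometric Assumption does real work, since without it $P$ is not well-defined. Once this estimate is in hand, closedness of $\tilde Y$ follows exactly as in the earlier lemmas of Section 2, and uniqueness of the lifted minimizer is immediate from injectivity of the restriction map on $\mathcal{H}$ guaranteed by $\det H \neq 0$. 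I would then remark that the role of the Assumption is sharp, pointing to the skew-symmetric harmonic counterexample already mentioned in the text.
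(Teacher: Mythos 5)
Your proposal follows essentially the same route as the paper's proof: pass to the quotient of $\dom_{L^2}\Delta_\zeta$ by the harmonic functions so that $T_\zeta^{1/2}$ becomes a norm, prove closedness of the image of $Y$ via a sup-norm estimate, apply Theorem \ref{hilbertconvexminimum}, and use the Assumption (invertibility of $H$) both to control the harmonic part and to make the lift back to $Y$ unique. The role you assign to each ingredient, including the remark about the skew-symmetric counterexample, matches the paper.

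There is, however, one step whose justification is wrong as literally stated. You claim the needed Sobolev-type inequality is ``obtainable because $\Delta_\zeta u \in L^2$ forces $u \in \dom\E$ with energy controlled by $T_\zeta(u)$.'' The inequality $\E(u) \le C \|\Delta_\zeta u\|_2^2$ (this is Lemma 4.6 of \cite{SU}, which the paper cites) holds only for functions with Dirichlet boundary conditions on $V_0$; it fails badly in general, since nonzero harmonic functions have $T_\zeta = 0$ but positive energy. Your intended application is to $u - Pu$, where $Pu$ is the harmonic interpolant at $x_1,x_2,x_3$; but $u - Pu$ vanishes at those three points, not on $V_0$, so the cited bound does not apply to it directly. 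The repair is precisely the paper's argument: decompose $u - Pu = w + v$ with $w$ Dirichlet and $v$ harmonic; then $\Delta_\zeta w = \Delta_\zeta u$, so $\E(w) \le C\,T_\zeta(u)$ and $\|w\|_\infty \le C\,T_\zeta(u)^{1/2}$ by Lemma \ref{lemma:supenergy}; since $v(x_i) = -w(x_i)$ for $i = 1,2,3$, invertibility of $H$ bounds the boundary values $v(q_0), v(q_1), v(q_2)$ in terms of $\|w\|_\infty$, and the maximum principle for harmonic functions then bounds $\|v\|_\infty$, yielding $\|u - Pu\|_\infty \le C\,T_\zeta(u)^{1/2}$. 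With this decomposition inserted at your step (4), the proof closes and coincides with the paper's.
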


\begin{proof}
Convexity holds trivially.
We consider the space
\[
\tilde{A} = \text{dom}_{L^2} \Delta_{\zeta} / H(K)
\]
where $H(K)$ is the space of harmonic functions with the norm $\|\cdot\| = \sqrt{\int_{SG}{|\Delta_\zeta u|^2 d \zeta}}$. Let us define $\tilde{Y} = Y / H(K)$. We want to show that $\tilde{Y}$ is closed.\\

We first consider the situation in which we have Dirichlet boundary conditions and then observe we can easily extend to the general situation. We denote the space $\text{dom}_{L^2}\Delta_0=\{u\in \text{dom}_{L^2}\Delta_\zeta:u|_{V_0}=0 \}$. Note that the space $\tilde{A} \cong \dom_{L^2}\Delta_0$. \\

In the case that $Y \subset dom_{L^2} \Delta_0$, we want to show that $Y$ is closed in $dom_{L^2}\Delta_0$
 with respect to the norm $|| u || :=\sqrt{ \int (\Delta_\zeta u)^2 d \zeta} $.  By $(\ref{equation:supenergy})$, we know that $||u||_{\infty}\le C\E(u)^{\frac{1}{2}}$ for some constant $C$.  By \cite{SU}*{Lemma 4.6}, we have
\[
\mathcal{E}(u) \leq C_0 || \Delta_\zeta u ||_2^2.
\]
Combining the two inequalities yields
\[
|| u ||_{\infty}^2 \leq C_1 || \Delta_\zeta u ||_2^2
\]
where $C,C_0,C_1$  are positive constants. Given $u_n \rightarrow u$ in $dom_{L^2}\Delta_0$ with $u_n$  in $Y$,  the above inequality implies that $ u(x_i) = a_i$ for any $i$ and $u$ vanishes on the boundary.  Therefore, $Y$ is closed.  \\

To extend the result, we view our space as $\tilde{A}$. Now suppose $\widetilde{u_n}\rightarrow \tilde{u}$, where $\widetilde{u_n}$ are in $\tilde{Y}$ and $u_n\in Y$ for all $n$. We note we can pick representatives with Dirichlet boundary conditions $w_n$ for each $u_n$. Since $H$ is invertible, we know that there exists a harmonic function $v$ such that $u(x_i)=a_i$ for $i=1,2,3$ where $u=w+v$ and $w$ has Dirichlet boundary conditions. Since $||w_n-w||_{\infty}\rightarrow 0$ and $u(x_i)=a_i$ we have $\lim_{n\rightarrow \infty} v_n(x_i)=v(x_i)$ for $i=1,2,3$. We note that using the invertibilty of $H$ we obtain $\lim_{n\rightarrow \infty} v_n(q_i)=v(q_i)$ for $i=0,1,2$. By the maximum principle for harmonic function, we know that $||v_n-v||_{\infty}\rightarrow 0$ as $n\rightarrow \infty$. As a result, we have $||u_n-u||_{\infty}\rightarrow 0$ as $n \to \infty$ so $u\in Y$ and $\tilde{u} \in \tilde{Y}$. Therefore, we again get that $\tilde{Y}$ is closed. 
\end{proof}
When we add in the normal derivatives constraint, we require a new condition on the measure,
\begin{equation}\label{equation:measure}
\|\Delta_\zeta u\|^2_2 \geq C_w \|\Delta_\zeta(u\circ F_w)\|^2_2 
\end{equation}
for every word $w$ and some constant $C_w$ depending only on $w$.
We now show that both the standard self-similar measure, $\mu$, and the Kusuoka measure, satisfy (\ref{equation:measure}).
\begin{Lemma}\label{lemma:measureproperty}
Both the self similar measure and the Kusuoka measure, $\nu$, satisfy (\ref{equation:measure}). Furthermore we can establish estimates on $C_w$.
\end{Lemma}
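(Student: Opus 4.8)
The plan is to reduce (\ref{equation:measure}) to a statement about the Radon--Nikodym derivative of a ``pulled-back'' copy of $\zeta$, using a cellwise scaling relation for $\Delta_\zeta$ derived from its variational definition. For a word $w$, write $\zeta_w(A) := \zeta(F_w A)$ for the measure obtained by restricting $\zeta$ to the cell $F_w(SG)$ and transporting it back through $F_w$. Testing $\E(u\circ F_w,\psi) = -\int \Delta_\zeta(u\circ F_w)\,\psi\,d\zeta$ against functions $\psi$ vanishing on $V_0$ and extending $\psi\circ F_w^{-1}$ by zero outside the cell, the self-similar energy identity $\E(g)=\tfrac53\sum_i\E(g\circ F_i)$ localizes the energy to $F_w(SG)$ and, after a change of variables, yields
\[
\Delta_\zeta(u\circ F_w)\,d\zeta \;=\; \left(\tfrac{3}{5}\right)^{|w|}\big((\Delta_\zeta u)\circ F_w\big)\,d\zeta_w .
\]
Hence, provided $\zeta_w\ll\zeta$ with density $\rho_w=d\zeta_w/d\zeta$, one has $\Delta_\zeta(u\circ F_w)=(3/5)^{|w|}\rho_w\,((\Delta_\zeta u)\circ F_w)$ $\zeta$-a.e. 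Squaring, writing $\rho_w^2\,d\zeta=\rho_w\,d\zeta_w$, and changing variables back gives
\[
\|\Delta_\zeta(u\circ F_w)\|_2^2 \;=\; \left(\tfrac{3}{5}\right)^{2|w|}\!\int_{F_w(SG)} (\rho_w\circ F_w^{-1})\,|\Delta_\zeta u|^2\,d\zeta .
\]
Thus if $\rho_w$ is bounded, say $\|\rho_w\|_\infty\le S_w$, then $\|\Delta_\zeta(u\circ F_w)\|_2^2\le(3/5)^{2|w|}S_w\,\|\Delta_\zeta u\|_2^2$, so (\ref{equation:measure}) holds with $C_w=(5/3)^{2|w|}S_w^{-1}$, and the whole statement reduces to producing $\zeta_w\ll\zeta$ with an explicitly estimable bound $S_w$.

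For the self-similar measure $\mu$ this is immediate. The self-similarity $\mu=\sum_i\tfrac13\,\mu\circ F_i^{-1}$ forces $\mu_w=3^{-|w|}\mu$, so $\rho_w\equiv 3^{-|w|}$ is constant and the scaling relation collapses to the classical identity $\Delta_\mu(u\circ F_w)=5^{-|w|}(\Delta_\mu u)\circ F_w$. Taking $S_w=3^{-|w|}$ in the estimate above yields the clean explicit constant $C_w=(25/3)^{|w|}$.

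For the Kusuoka measure $\nu$ the density $\rho_w$ is no longer constant, and this is where the work lies. Here I would invoke the matrix self-similar structure of $\nu$: writing $\nu=\nu_{g_1}+\nu_{g_2}$ for an energy-orthonormal basis $g_1,g_2$ of harmonic functions modulo constants, the scaling $\nu_{g}|_{F_i(SG)}=\tfrac53(F_i)_*\nu_{g\circ F_i}$ for energy measures, together with the harmonic extension rule $g\circ F_i=M_ig$ (with $M_i$ the $2\times 2$ restriction of the harmonic extension matrix, whose eigenvalues are $3/5$ and $1/5$), gives $\nu_i=\tfrac53\sum_k\nu_{M_ig_k}$. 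Expanding and iterating over the letters of $w$ shows $\nu_w\ll\nu$ with density
\[
\rho_w \;=\; \left(\tfrac{5}{3}\right)^{|w|}\mathrm{tr}\!\left(M_w M_w^{\top}\,Z\right),\qquad M_w=M_{w_{|w|}}\cdots M_{w_1},
\]
where $Z=(d\nu_{g_l,g_{l'}}/d\nu)$ is the Kusuoka--Kigami matrix field, a symmetric positive $\nu$-a.e.\ trace-one matrix. Since $\mathrm{tr}(M_wM_w^{\top}Z)\le\|M_w\|^2$, we may take $S_w=(5/3)^{|w|}\|M_w\|^2$, and the submultiplicativity $\|M_w\|\le\prod_j\|M_{w_j}\|$ makes $S_w$ finite and explicit, giving $C_w=(5/3)^{|w|}\|M_w\|^{-2}$.

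The main obstacle is precisely the Kusuoka case: one must verify carefully that $\nu_w\ll\nu$ with a genuinely bounded density, which requires controlling the off-diagonal energy measures through $|\nu_{g_l,g_{l'}}|\le\sqrt{\nu_{g_l}\nu_{g_{l'}}}\ll\nu$ and identifying the density with the matrix trace above; and one must then convert the noncommutative products $M_w$ into a usable estimate, checking that $\|M_w\|$ does not degenerate (the eigenvalue bounds $3/5,1/5$ per letter give only heuristic control, so the genuine bound comes from the operator norms of the products). The self-similar case, by contrast, is routine once the scaling relation is in hand.
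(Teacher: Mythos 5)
Your argument is correct, but it reaches the lemma by a genuinely different, self-contained route, whereas the paper's proof is essentially a citation: the paper quotes the self-similar identity for $\|\Delta_\nu u\|_2^2$ from Bell--Ho--Strichartz \cite{RCS} (Theorem 2.3), in which the factors $Q_w$ play exactly the role of your densities $\rho_w = d\zeta_w/d\zeta$, and then imports the bounds $0\le d\nu_j/d\nu\le \tfrac23$ (\cite{RCS}, Theorem 3.5) and the product rule for $Q_w$ (\cite{RCS}, Corollary 2.4) to conclude $(75/29)^{|w|}\le C_w\le 15^{|w|}$; the self-similar case is handled exactly as you handle it, via $\Delta_\mu(u\circ F_w)=5^{-|w|}(\Delta_\mu u)\circ F_w$, yielding the same constant $C_w=(25/3)^{|w|}$. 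What your route buys: the scaling relation $\Delta_\zeta(u\circ F_w)\,d\zeta=(3/5)^{|w|}\bigl((\Delta_\zeta u)\circ F_w\bigr)\,d\zeta_w$, derived from the weak formulation and energy localization, cleanly isolates the single property a measure needs --- $\zeta\circ F_w\ll\zeta$ with bounded density --- so (\ref{equation:measure}) is established for any such measure, with $\mu$ and $\nu$ as special cases, and your matrix-trace formula for the Kusuoka density is precisely the object that \cite{RCS} packages into $Q_w$. What the paper's route buys: brevity, and a sharper per-letter bound, since \cite{RCS} Theorem 3.5 exploits structural information about the Kusuoka matrix field $Z$ (it cannot align with the top eigenvector of $M_j^{\top}M_j$ --- that is exactly the content of $d\nu_j/d\nu\le\tfrac23$), whereas your estimate $\mathrm{tr}(M_w^{\top}M_w Z)\le\|M_w\|^2\,\mathrm{tr}(Z)$ uses only that $Z$ is positive semidefinite of unit trace. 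Two small points to tighten in your write-up: first, with your composition order the density involves $M_w^{\top}M_w$ rather than $M_wM_w^{\top}$ (harmless, since only the operator norm enters); second, to conclude $\|M_w\|\le\prod_j\|M_{w_j}\|=(3/5)^{|w|}$ you need $\|M_i\|=3/5$ for each letter, which does not follow from the eigenvalues $3/5,\,1/5$ alone for a non-normal matrix --- but it does hold here because each $M_i$ is self-adjoint in the energy inner product (its $3/5$- and $1/5$-eigenfunctions are the symmetric and skew-symmetric harmonic functions at the fixed vertex, which are energy-orthogonal). With that observation inserted, your proof is complete and gives a lower bound on $C_w$ of the same exponential type as the paper's.
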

\begin{proof}
In the case of the self similar measure, it follows immediately from the self similar identity on the Laplacian which can be found in \cite{Str}. In this case, $C_w = r_w \mu_w^{-1} = \left(\frac{25}{3}\right)^{|w|}$.\\

We have the following "self-similar" identity for the Kusuoka measure (\cite{RCS}, Theorem 2.3),
\begin{align*} \label{equ2}
||\Delta_{\nu} u||_{2}^2 &= \int |\Delta_{\nu} u|^2 d \nu \\ &= \sum_{w}  \int Q_w |(\Delta_{\nu} u) \circ F_w|^2 d \nu = \sum_{w} \int \frac{1}{Q_w} |\Delta_{\nu}(u \circ F_w)|^2 d \nu \geq C_w ||\Delta_{\nu}(u \circ F_w)||_{2}^2,
\end{align*}
where $C_w$ is a constant depending on $w$ that we explore as follows.

We have $Q_j = \frac{1}{15}+ \frac{12}{25}\frac{d\nu_j}{d\nu}$ and $\nu_j$ and $Q_w$ are as defined in \cite{RCS}. By \cite{RCS}*{Theorem 3.5}, we have $ 0 \leq \frac{d\nu_j}{d\nu} \leq \frac{2}{3}$. Therefore, we obtain $\frac{1}{15} \leq Q_j \leq \frac{29}{75}$ in which case by \cite{RCS}*{Corollary 2.4}, for a general word $w$, we have
\[
\left( \frac{1}{15} \right)^{|w|} \leq Q_w \leq \left( \frac{29}{75} \right)^{|w|}
\]
so

\[\left( \frac{75}{29} \right)^{|w|} \leq C_w \leq 15^{|w|}.
\]
\end{proof}

Now we add in the normal derivative constraints and show we are guaranteed a minimizer. 
\begin{Lemma}\label{lemma:energyconvergence}
Let everything be as in Theorem \ref{theorem:laplacian1}, so $Y=\{u\in dom_{L^2}\Delta_\zeta :u(x_i)=a_i\}$.
Define $\tilde{Y}=Y/H(K)$. Suppose $\tilde{u}_n \rightarrow \tilde{u} \in \tilde{Y}$ and $u_n,u\in Y$. If we set $z_n = u_n - u$ then we have $\E(z_n)\rightarrow 0$ as $n \rightarrow \infty $.
\end{Lemma}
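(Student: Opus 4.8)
The plan is to split each $z_n$ into its harmonic part and a part satisfying Dirichlet boundary conditions, and to show separately that both pieces have energy tending to zero. Write $z_n = w_n + v_n$, where $v_n$ is the harmonic function with $v_n(q_j) = z_n(q_j)$ for $j = 0,1,2$, and $w_n = z_n - v_n$ vanishes on $V_0$. Since $v_n$ is harmonic, $\Delta_\zeta v_n = 0$, so $\Delta_\zeta z_n = \Delta_\zeta w_n$. The hypothesis $\tilde{u}_n \to \tilde{u}$ in $\tilde{Y}$ means precisely that $\|\Delta_\zeta z_n\|_2 \to 0$, since the quotient norm is $\|\cdot\| = \sqrt{\int |\Delta_\zeta u|^2 d\zeta}$ and $z_n = u_n - u$ is a representative of $\tilde{u}_n - \tilde{u}$. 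Hence $\|\Delta_\zeta w_n\|_2 \to 0$ as well. Once I have $\E(w_n) \to 0$ and $\E(v_n) \to 0$, bilinearity of $\E$ together with Cauchy--Schwarz gives $\E(z_n) \le \left(\E(w_n)^{1/2} + \E(v_n)^{1/2}\right)^2 \to 0$.

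The Dirichlet part is straightforward. Because $w_n$ has Dirichlet boundary conditions, I apply \cite{SU}*{Lemma 4.6} to obtain $\E(w_n) \le C_0 \|\Delta_\zeta w_n\|_2^2 = C_0 \|\Delta_\zeta z_n\|_2^2 \to 0$. Moreover, since $w_n$ vanishes on $V_0$, Lemma \ref{lemma:supenergy} yields $\|w_n\|_\infty^2 \le C\,\E(w_n) \to 0$.

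The harmonic part is where the geometric Assumption enters, and this is the main obstacle, since for a harmonic function the $L^2$ norm of its Laplacian gives no control whatsoever on its energy. The key observation is that $z_n$ vanishes at every $x_i \in E$: both $u_n$ and $u$ lie in $Y$, so $u_n(x_i) = u(x_i) = a_i$. Hence $v_n(x_i) = -w_n(x_i)$ for each $i$, and the uniform bound $\|w_n\|_\infty \to 0$ forces $v_n(x_i) \to 0$ for all $i$, in particular for the three distinguished points $x_1, x_2, x_3$ of the Assumption. Writing the vector of values $\left(v_n(x_1), v_n(x_2), v_n(x_3)\right)$ of the harmonic function $v_n$ as $H$ applied to the vector of boundary values $\left(v_n(q_0), v_n(q_1), v_n(q_2)\right)$, the invertibility of $H$ lets me invert this relation and conclude $v_n(q_j) \to 0$ for $j = 0,1,2$.

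Finally, the energy of a harmonic function is a fixed quadratic form in its three boundary values, so $\E(v_n)$ depends continuously on $\left(v_n(q_0), v_n(q_1), v_n(q_2)\right)$ and vanishes when these are all zero; therefore $\E(v_n) \to 0$. Combining this with $\E(w_n) \to 0$ through the inequality noted in the first paragraph completes the proof. I expect the only delicate point to be that the argument genuinely requires the Assumption: without it, the recovery of the boundary values of $v_n$ fails, and the harmonic part could carry a fixed nonzero energy while remaining consistent with the constraints — which is exactly the non-uniqueness phenomenon flagged for sets $E$ lying on a line.
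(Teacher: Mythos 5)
Your proof is correct and follows essentially the same route as the paper: the same Dirichlet-plus-harmonic decomposition (applied to $z_n$ rather than to $u_n$ and $u$ separately, which is equivalent), the same use of \cite{SU}*{Lemma 4.6} for the Dirichlet part, and the same recovery of the harmonic part's boundary values via the invertibility of $H$, combined by the triangle inequality. The only difference is that you spell out in full the harmonic-part argument that the paper defers to the proof of Theorem \ref{theorem:laplacian1}.
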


\begin{proof}
As in Theorem \ref{theorem:laplacian1}, we write $u_n=w_n+v_n$ and $u=w+v$ where $w_n, w$ are Dirichlet functions and $v, v_n$ are harmonic. 
Since $w_n$ and $w$ have Dirichlet boundary conditions, we can use \cite{SU}*{Lemma 4.6} to conclude that
\[
\E(w_m-w) \leq C || \Delta_\zeta (w_m-w) ||_{2}^2.
\] We can use this to get $\E(w_n - w)\rightarrow 0$ when $n\rightarrow \infty$. Also since $\lim_{n\rightarrow \infty} v_n(q_i)=v(q_i)$ for $i=0,1,2$ we have $\E(v_n-v)\rightarrow 0$ as $n\rightarrow \infty$. Hence, the result follows immediately from the triangle inequality.
\end{proof}
\begin{Theorem}\label{theorem:laplacian2}
(Existence and Uniqueness) Set $$X = \{u \in \dom_{L^2}\Delta_\zeta ~|~ u(x_i) = a_i, \partial_n u(y_j) = b_j\}.$$ Then for any measure $\zeta$ satisfying (\ref{equation:measure}) there is a unique minimizer in $X$ for $T_\zeta$.
\end{Theorem}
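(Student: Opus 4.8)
The plan is to mimic the structure of the proof of Theorem \ref{theorem:laplacian1}, adapting it to incorporate the additional normal derivative constraints. As before, convexity of $X$ is immediate, and I would work in the quotient space $\tilde{A} = \dom_{L^2}\Delta_\zeta / H(K)$ equipped with the norm $\|u\| = \sqrt{\int_{SG} |\Delta_\zeta u|^2 \, d\zeta}$, setting $\tilde{X} = X/H(K)$. The goal is to show $\tilde{X}$ is closed, after which Theorem \ref{hilbertconvexminimum} yields both existence and uniqueness. Following Theorem \ref{theorem:laplacian1}, I would first handle the Dirichlet case and then extend via the invertibility of $H$.

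The key new ingredient is controlling convergence of the normal derivative constraints $\partial_n u(y_j) = b_j$, which is exactly why the measure hypothesis (\ref{equation:measure}) was introduced. Given $\tilde{u}_n \to \tilde{u}$ with $u_n, u \in X$, Lemma \ref{lemma:energyconvergence} already gives $\E(z_n) \to 0$ for $z_n = u_n - u$. I would leverage (\ref{equation:measure}) to localize: the normal derivative at a junction point $y_j \in V_m$ is computed from the values of $u$ on an $m$-cell neighborhood, and by the Gauss--Green formula the normal derivative $\partial_n u(y_j)$ can be expressed in terms of $\int_{\text{cell}} \Delta_\zeta u \, d\zeta$ together with restricted energy terms. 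The estimate (\ref{equation:measure}), which by Lemma \ref{lemma:measureproperty} holds for both $\mu$ and $\nu$, ensures $\|\Delta_\zeta (z_n \circ F_w)\|_2^2 \le C_w^{-1} \|\Delta_\zeta z_n\|_2^2 \to 0$ on each cell, so the localized Laplacian contributions vanish in the limit. Combining this with $\E(z_n) \to 0$ from Lemma \ref{lemma:energyconvergence}, which controls the restricted energy pieces of the normal derivative, I conclude $\partial_n u_n(y_j) \to \partial_n u(y_j)$, whence $\partial_n u(y_j) = b_j$ and $\tilde{u} \in \tilde{X}$.

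The main obstacle I expect is making rigorous the claim that the normal derivative is jointly continuous with respect to the pair $(\E(z_n), \|\Delta_\zeta z_n\|_2)$ convergence. Concretely, one must write $\partial_n u(y_j)$ at a level-$m$ junction point as a sum of contributions from the cells meeting $y_j$, each contribution being a difference of boundary normal derivatives that, via Gauss--Green, equals a combination of an energy-type pairing and a weighted integral of $\Delta_\zeta u$ over the cell. The subtlety is that $\partial_n$ involves both the harmonic part and the Laplacian part of $u$, so I need both $\E(z_n) \to 0$ (handling the harmonic boundary behavior) and the per-cell Laplacian bound from (\ref{equation:measure}) simultaneously; neither alone suffices. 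Once this joint continuity is established, closedness of $\tilde{X}$ and hence existence and uniqueness follow exactly as in Theorem \ref{theorem:laplacian1}.
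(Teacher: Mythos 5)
Your proposal is correct and follows essentially the same route as the paper: the paper also reduces to showing $\tilde{X}$ is closed in $\tilde{Y}$, picks $y_j = F_w q_i$, and applies Gauss--Green with the harmonic test function $h_i$ to write $\partial_n (z_m \circ F_w)(q_i) = \E(z_m \circ F_w, h_i) + \int \Delta_\zeta (z_m \circ F_w)\, h_i \, d\zeta$, bounding the first term by the self-similar energy identity together with Lemma \ref{lemma:energyconvergence} and the second by (\ref{equation:measure}). The ``joint continuity'' you flag as the remaining obstacle is resolved exactly by this two-term Gauss--Green estimate, so your outline is in substance the paper's argument.
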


\begin{proof}
To extend the result, we are only concerned with showing that $\tilde{X}$ is closed as a subset of $\tilde{Y}$ (convexity is again obvious). Let $u_n \rightarrow u$ as $n \rightarrow \infty$ in $||\cdot||$ with $u_n \in Y$. We put $z_m = u_m - u$.  We pick $y_j = F_{w}q_i$ for some $w$ and $q_i$.  By using the Gauss-Green formula, we have
\[
\E (z_m \circ F_{w}, h_i) = - \int \Delta_\zeta (z_m \circ F_{w}) h_i d \zeta + \partial_n (z_m  \circ F_{w}) (q_i).
\]
So we obtain
\[
|\partial_n (z_m  \circ F_{w}) (q_i)| \leq |\E (z_m \circ F_{w}, h_i)| + |\int \Delta_\zeta (z_m \circ F_{w}) h_i d \zeta| \leq \tilde{C}(\mathcal{E}^{1/2}(z_m) + ||\Delta_\zeta z_m ||_2)
\]
for some constant $\tilde{C} >0$ from the self-similar identity for energy and (\ref{equation:measure}).
We apply Lemma \ref{lemma:energyconvergence} to see that as $m \to \infty$ we have $(\mathcal{E}^{1/2}(z_m) + ||\Delta_\zeta z_m ||_2) \to 0$ so $|\partial_n (z_m  \circ F_{w}) (q_i)| \to 0$. Thus taking the limit as $m \rightarrow \infty$ yields $\partial_n u(y_j) = b_j$.
\end{proof}

\begin{Remark}
Note that we may view this as a subset of the  value only constraints, and thus the results are fully generalized. We do not know what happens in the situation when $E$ does not satisfy our assumption and we impose boundary conditions.
\end{Remark}

\subsection{Construction}
In this section, we assume $E = F$ for simplicity and assume the measure $\zeta$ satisfies the appropriate conditions to guarantee the existence and uniquness of a minimizer.  We continue as before by finding Euler Lagrange equations for a necessary condition for the minimizer.\\

\begin{Lemma}\label{lemma:eulerlagrangelaplacian}
Given $u \in \dom_{L^2}\Delta_\zeta$ which minimizes $T_\zeta$, we must have 
\begin{equation}\label{eulerlagrangelaplacian}\int \Delta_\zeta u \Delta_\zeta v d \zeta = 0 \end{equation} for all $v \in \dom_{L^2}\Delta_\zeta$ with $v|_{E} \equiv 0$ and $\partial_n v|_{E} \equiv 0.$
\end{Lemma}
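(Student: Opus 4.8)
The plan is to run the standard first-variation argument, exactly parallel to the proof of Lemma \ref{lemma:eulerlagrange} for the energy functional. Suppose $u$ minimizes $T_\zeta$ subject to the constraints $u|_E = a$ and $\partial_n u|_E = b$. The key observation is that for any admissible perturbation direction $v$, the perturbed function $u + tv$ should remain in the constraint set $X$; this forces $v$ to satisfy the homogeneous constraints $v|_E \equiv 0$ and $\partial_n v|_E \equiv 0$, which is precisely the hypothesis imposed on $v$ in the statement. I would first verify that $u + tv \in \dom_{L^2}\Delta_\zeta$ with $(u+tv)|_E = a$ and $\partial_n(u+tv)|_E = b$, so that $u+tv$ is a competitor for every $t \in \R$.

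Next I would expand the functional along the line $t \mapsto u + tv$. Using bilinearity of the inner product $\int \Delta_\zeta(\cdot)\,\Delta_\zeta(\cdot)\,d\zeta$ and the fact that $\Delta_\zeta$ is linear, I compute
\begin{align*}
T_\zeta(u+tv) &= \int \big(\Delta_\zeta(u+tv)\big)^2 \, d\zeta \\
&= \int (\Delta_\zeta u)^2\, d\zeta + 2t \int \Delta_\zeta u \, \Delta_\zeta v \, d\zeta + t^2 \int (\Delta_\zeta v)^2 \, d\zeta.
\end{align*}
This is a scalar quadratic polynomial $f(t) = T_\zeta(u) + 2t\big(\int \Delta_\zeta u\,\Delta_\zeta v\,d\zeta\big) + t^2 T_\zeta(v)$ in the single real variable $t$. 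Since $u$ is the minimizer and $u+tv$ is admissible for all $t$, the function $f$ attains its minimum at $t=0$, so $f'(0) = 0$. Computing $f'(0) = 2\int \Delta_\zeta u\,\Delta_\zeta v\,d\zeta$ and setting it to zero yields (\ref{eulerlagrangelaplacian}) immediately.

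The argument is essentially routine once the variational setup is in place, so I do not anticipate a genuine obstacle; the only point requiring a small amount of care is the admissibility step. One must confirm that $v \in \dom_{L^2}\Delta_\zeta$ guarantees $u + tv \in \dom_{L^2}\Delta_\zeta$ (which follows since the domain is a linear space), and that the constraints are genuinely preserved, which is where the two homogeneous conditions on $v$ — vanishing values and vanishing normal derivatives on $E$ — enter. Because we assume $E = F$ in this section, a single admissible class of test functions $v$ handles both constraint types simultaneously, and there is no issue of mismatched constraint sets. The differentiability of $f$ is trivial since it is a polynomial in $t$, so the first-order condition $f'(0)=0$ is justified by elementary single-variable calculus.
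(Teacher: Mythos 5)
Your proposal is correct and follows essentially the same argument as the paper: a first-variation computation showing $u+tv$ remains admissible, expanding $T_\zeta(u+tv)$ as a quadratic in $t$, and applying the first-order condition $f'(0)=0$ at the interior minimum $t=0$. The only difference is that you spell out the admissibility check in more detail than the paper does, which is harmless.
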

\begin{proof}
Suppose $u$ is the minimum.
Given any $v \in \dom_{L^2}\Delta_\zeta$ with $v|_{E} \equiv 0$ and $\partial_n v|_E \equiv 0$ we have that $u + tv$ also satisfies the constraints for any $t \in \R$. We consider $f(t) = T_{\zeta}(u+tv)$ and notice that since $u$ is a minimum by hypothesis we have $f'(0) = 0$. Since $$T_{\zeta}(u + tv) = \int (\Delta_\zeta u)^2 d \zeta + 2 t \int \Delta_\zeta u \Delta_\zeta v d \zeta + t^2 \int (\Delta_\zeta v)^2 d \zeta,$$
 we have
$$f'(0) = \int \Delta_\zeta u \Delta_\zeta v d \zeta = 0.$$
\end{proof}
We can apply this along with the Dirichlet boundary conditions to immediately get uniqueness:
\begin{Lemma} \label{lemma:laplacianuniqueness} (Uniqueness) If $u_1$ and $u_2$ both satisfy (\ref{eulerlagrangelaplacian}) then $u_1 = u_2$
\end{Lemma}
\begin{proof}
Specifically, since $u_1(x_i) = u_2(x_i) = a_i$ we get $(u_1 - u_2)|_{E} \equiv 0$ and likewise $\partial_n(u_1-u_2)|_E \equiv 0$. Therefore,  we can set $v = u_1 - u_2$ and use linearity of the integral and Laplacian to obtain
$$\int (\Delta_\zeta(u_1 - u_2))^2 d \zeta = 0.$$
This gives $\Delta_\zeta(u_1 - u_2) = 0$ almost everywhere, which implies $\E(u_1 - u_2) = 0.$
So  the functions differ by a harmonic function $h$. By our assumption that $H$ is invertible we immediately get that $h=0$ so $u_1 = u_2$.

\end{proof}
\begin{Lemma} \label{lemma: laplacianconstruction}
If $u$ is a piecewise biharmonic function on $m$-cells and $v \in \dom_{L^2}{\Delta_\zeta}$ then we have \begin{equation}\label{equation:biharmonic}\int \Delta_\zeta u \Delta_\zeta v d \zeta = \sum_{|w|=m}{r_w^{-1}\sum_{V_0}{\left( (\Delta_\zeta u \circ F_w )\partial_n(v \circ F_w) - (v \circ F_w)\partial_n (\Delta_\zeta u \circ F_w)\right)}}.\end{equation}	
\end{Lemma}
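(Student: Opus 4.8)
The plan is to reduce the claimed identity to Green's second (symmetric) identity applied cell by cell, exploiting that $\Delta_\zeta u$ is harmonic on each $m$-cell. First I would record the symmetric Green identity on $SG$: starting from the Gauss--Green formula $\E(f,g) = -\int(\Delta_\zeta f)\,g\,d\zeta + \sum_{V_0} g\,\partial_n f$ (the same formula already invoked in the proof of Theorem \ref{theorem:laplacian2}), I write it once more with the roles of $f$ and $g$ exchanged and subtract; the energy terms cancel by symmetry of $\E$, leaving
\[
\int_{SG}\bigl(f\,\Delta_\zeta g - g\,\Delta_\zeta f\bigr)\,d\zeta = \sum_{V_0}\bigl(f\,\partial_n g - g\,\partial_n f\bigr).
\]

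Next I would localize. Since $\int_{SG}\Delta_\zeta u\,\Delta_\zeta v\,d\zeta = \sum_{|w|=m}\int_{F_w(SG)}\Delta_\zeta u\,\Delta_\zeta v\,d\zeta$, it suffices to prove the identity on a single $m$-cell $F_w(SG)$ and then sum. On that cell I apply the Green identity above to the pair $f = v$ and $g = \Delta_\zeta u$. The hypothesis that $u$ is piecewise biharmonic means precisely that $\Delta_\zeta(\Delta_\zeta u) = 0$ on $F_w(SG)$, so the term involving $\int v\,\Delta_\zeta(\Delta_\zeta u)\,d\zeta$ vanishes and only
\[
\int_{F_w(SG)}(\Delta_\zeta u)\,\Delta_\zeta v\,d\zeta = \sum_{F_w(V_0)}\bigl((\Delta_\zeta u)\,\partial_n^{(w)} v - v\,\partial_n^{(w)}(\Delta_\zeta u)\bigr)
\]
survives, where $\partial_n^{(w)}$ denotes the normal derivative taken relative to the cell.

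Finally I would pull everything back to $SG$ through $F_w$. Point values transform trivially, via $(\Delta_\zeta u)(F_w q) = (\Delta_\zeta u\circ F_w)(q)$ and $v(F_w q) = (v\circ F_w)(q)$, while the cell normal derivative carries the self-similar scaling $\partial_n^{(w)}\phi(F_w q) = r_w^{-1}\,\partial_n(\phi\circ F_w)(q)$, which is exactly where the factor $r_w^{-1}$ enters. Substituting these relations and summing over $q\in V_0$ and over all words $|w|=m$ yields (\ref{equation:biharmonic}).

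The main difficulty is not conceptual but the careful bookkeeping of the self-similar scalings together with the justification that the Green identity may legitimately be applied on each cell. I must verify that $\Delta_\zeta u$ (which is harmonic, hence smooth, on each $m$-cell) and $v\in\dom_{L^2}\Delta_\zeta$ both possess well-defined normal derivatives at the cell boundary vertices, and confirm that the normal-derivative scaling factor is exactly $r_w^{-1}$ rather than $r_w^{-1}\zeta_w^{\pm 1}$, so that the measure weights $\zeta_w$ cancel against those arising from the change of variables in the interior integral and leave the clean coefficient $r_w^{-1}$ recorded in the statement.
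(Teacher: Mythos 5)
Your proof is correct, and its engine is the same as the paper's --- two applications of Gauss--Green combined with the cell-wise harmonicity of $\Delta_\zeta u$ --- but the bookkeeping is organized differently, in a way worth noting. The paper first uses the self-similar change of variables $\int f\,d\zeta=\sum_{|w|=m}\mu_w\int f\circ F_w\,d\zeta$ and the scaling identity $(\Delta_\zeta v)\circ F_w=(\mu_w r_w)^{-1}\Delta_\zeta(v\circ F_w)$ to pull everything back to the unit gasket, and only then applies the global Gauss--Green formula twice to the pair $v\circ F_w$ and $(\Delta_\zeta u)\circ F_w$; the measure weights cancel and leave the factor $r_w^{-1}$. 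You instead keep the interior integral on each cell $F_w(SG)$, apply the localized symmetric Green identity there, and pull back only the boundary terms, where the sole scaling needed is the purely energetic relation $\partial_n^{(w)}\phi(F_w q)=r_w^{-1}\partial_n(\phi\circ F_w)(q)$. Your route buys some genuine robustness: since the interior integral is never transformed, no measure weights $\zeta_w$ ever appear, so the argument applies verbatim to an arbitrary finite positive continuous measure $\zeta$, whereas the paper's displayed scaling identities, written with $\mu_w$, are literally valid only for self-similar measures even though the lemma is stated for general $\zeta$. The price is that you must justify the localized Gauss--Green identity on a single cell and the existence of localized normal derivatives of $\Delta_\zeta u$ (harmonic on the cell, so automatic) and of $v\in\dom_{L^2}\Delta_\zeta$ at cell boundary points; both are standard facts in the Kigami--Strichartz framework, and you correctly flag them as the points requiring verification.
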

\begin{proof}
Let $v \in \dom_{L^2}{\Delta_\zeta}$. Then
$$\int \Delta_\zeta v \Delta_\zeta u d \zeta= \sum_{|w|=m}{\mu_w \int (\Delta_\zeta v \circ F_w)(\Delta_\zeta u \circ F_w)}.$$ Since $\Delta_\zeta u \circ F_w$ is harmonic, Gauss-Green implies this equals
$$\sum_{|w|=m}{\mu_w (\mu_w r_w)^{-1}\left(-\E(v \circ F_w, \Delta_\zeta u \circ F_w) +\sum_{V_0}{ (\Delta_\zeta u \circ F_w) \partial_n(v \circ F_w)}\right)}.$$
We can apply Gauss-Green again since $v \circ F_w \in \dom \E$ to obtain
$$ \sum_{|w|=m}{r_w^{-1}\left(\int (v \circ F_w) \Delta_\zeta(\Delta_\zeta u \circ F_w) d \zeta -\sum_{V_0}{(v \circ F_w)\partial_n (\Delta_\zeta u \circ F_w)} +\sum_{V_0}{(\Delta_\zeta u \circ F_w )\partial_n(v \circ F_w)}\right)}$$
and since $\Delta_\zeta u \circ F_w$ is harmonic we finally obtain the equation (\ref{equation:biharmonic}).
\end{proof}
\begin{Corollary}\label{corollary: biharmonicminimizer}
If $E=V_m$ then the minimizer must be piecewise biharmonic.
\end{Corollary}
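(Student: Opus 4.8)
The plan is to produce a piecewise biharmonic function that meets the constraints and satisfies the Euler--Lagrange equation (\ref{eulerlagrangelaplacian}), and then to invoke uniqueness of such a solution (Lemma \ref{lemma:laplacianuniqueness}) to conclude that it is the minimizer. Let $u$ denote the unique minimizer guaranteed by Theorem \ref{theorem:laplacian2}. Since $u \in \dom_{L^2}\Delta_\zeta$, its values and one-sided normal derivatives at the points of $V_m$ are determined, and at each interior junction point the one-sided normal derivatives coming from the two adjacent $m$-cells sum to zero.

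First I would recall that the biharmonic functions on $SG$ (solutions of $\Delta_\zeta^2 v = 0$) form a six-dimensional space and that the boundary map $v \mapsto (v|_{V_0}, \partial_n v|_{V_0}) \in \R^6$ is an isomorphism. Injectivity follows from a double Gauss--Green computation: if $v$ is biharmonic with $v|_{V_0} = \partial_n v|_{V_0} = 0$, then $\int (\Delta_\zeta v)^2 d\zeta = \sum_{V_0}(\Delta_\zeta v\,\partial_n v - v\,\partial_n \Delta_\zeta v) = 0$, forcing $\Delta_\zeta v = 0$, so $v$ is harmonic and hence identically zero. Consequently, on each $m$-cell $F_w(SG)$ there is a unique biharmonic function whose boundary values and normal derivatives agree with those of $u$; let $\tilde u$ be the function equal to this biharmonic interpolant on each cell.

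Next I would verify that $\tilde u \in \dom_{L^2}\Delta_\zeta$ and satisfies the same constraints as $u$. By construction $\tilde u$ agrees with $u$ in value and in one-sided normal derivative at every point of $V_m$, so $\tilde u$ is continuous across cell boundaries and its one-sided normal derivatives already obey the gluing condition (they coincide with those of $u$); hence $\tilde u$ lies in the domain with $\tilde u|_{V_m} = u|_{V_m}$ and $\partial_n \tilde u|_{V_m} = \partial_n u|_{V_m}$. Now apply Lemma \ref{lemma: laplacianconstruction} to the piecewise biharmonic $\tilde u$: for any $v$ with $v|_E \equiv 0$ and $\partial_n v|_E \equiv 0$ and $E = V_m$, every boundary term on the right-hand side of (\ref{equation:biharmonic}) vanishes, since $(v\circ F_w)|_{V_0}$ records the values of $v$ on $V_m$ and $\partial_n(v\circ F_w)|_{V_0}$ records its normal derivatives there, all of which are zero. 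Thus $\int \Delta_\zeta \tilde u\, \Delta_\zeta v\, d\zeta = 0$ for all admissible $v$, so $\tilde u$ solves (\ref{eulerlagrangelaplacian}).

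Finally, both $u$ and $\tilde u$ satisfy the Euler--Lagrange equation and the same constraints, so Lemma \ref{lemma:laplacianuniqueness} gives $\tilde u = u$; therefore the minimizer is piecewise biharmonic on $m$-cells. The step I expect to be the main obstacle is the bookkeeping in verifying the gluing conditions together with the vanishing of the boundary terms: one must be careful that the constraint $\partial_n v|_{V_m} \equiv 0$ genuinely annihilates each one-sided normal derivative appearing in (\ref{equation:biharmonic}), and that the cellwise biharmonic interpolants assemble into an honest element of $\dom_{L^2}\Delta_\zeta$ rather than a merely cellwise-defined function. The conceptual point making everything work is the observation that, because $E = V_m$ forces the test functions to vanish to second order at all cell boundaries, Lemma \ref{lemma: laplacianconstruction} renders (\ref{eulerlagrangelaplacian}) automatic for any piecewise biharmonic function.
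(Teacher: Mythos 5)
Your proposal is correct and follows essentially the same route as the paper: construct the piecewise biharmonic interpolant of the constraint data, use Lemma \ref{lemma: laplacianconstruction} to see that the boundary terms in (\ref{equation:biharmonic}) vanish for all admissible test functions so that (\ref{eulerlagrangelaplacian}) holds, and then invoke Lemma \ref{lemma:laplacianuniqueness} to identify this function with the minimizer. The only difference is that you supply details the paper leaves implicit, namely the existence and uniqueness of the cellwise biharmonic interpolant via the six-dimensional boundary-data isomorphism and the verification of the gluing conditions.
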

\begin{proof}
Consider the piecewise biharmonic function $f$ that satisfies the constraints of Lemma \ref{lemma:eulerlagrangelaplacian}. This implies $f$ satisfies equation (\ref{eulerlagrangelaplacian}). Then, Lemma \ref{lemma:laplacianuniqueness} implies that it is the minimizer.

\end{proof}
\begin{Remark}
In the case where $E=V_m$ and $\zeta = \mu$, we can give a concrete way to calculate the energy. When $u$ is biharmonic, we have $\Delta u = \sum_{i=0}^{2} c_i h_i$ for some constants $c_i$. We know that a biharmonic function is completely determined by its values and the normal derivatives at $V_0$. We put $a_i = u(q_i),$ $b_i = \partial_n u(q_i),$ and $d_i = 2a_i - a_{i+1} - a_{i-1} - b_i$. We can interpret these $d_i$ as a measurement of how much the function differs from a harmonic function since all the $d_i$ are 0 if and only if the function is harmonic. By using equation (\ref{matrixeq}) we can solve $c_i$ in terms of $a_i$ and $b_i$. Reorganizing the equation in terms of $d_i$ yields
\begin{align*}
\Theta(a_i,b_i) := \int |\Delta u|^2 d \mu &=\int \left( \sum_{i=0}^{2} c_i h_i\right)^2 d \mu \\
&= 90 \sum_{i} a_i^2 + 
 30 \sum_{i\neq j}a_i b_j + 
 11 \sum_{i}b_i^2 - 90 \sum_{i < j} a_i a_j - 
 60 \sum_{i}a_i b_i - 8 \sum_{i < j}b_i b_j\\
 &=11 \sum_{i}d_i^2- 8 \sum_{i < j}d_i d_j.
 \end{align*}
Using the self similar identity we get for $V_m$:
$$\int|\Delta u|^2 d\mu = \left(\frac{25}{3}\right)^{m}\sum_{|w|=m} \Theta(F_w q_i,(\frac{5}{3})^m \partial_n F_w q_i).$$
\end{Remark}
~\\
~\\

We use Lemma \ref{lemma: laplacianconstruction}, to show that there is a unique function which satisfies the following properties, and that function is the minimizer. Namely the function is piecewise biharmonic on $V_m$ and its Laplacian is harmonic away from $E$. The exact conditions are the following,
\begin{enumerate}
\item
$u(x_i) = a_i,$
\item
$\partial_n u(x_i) = b_i,$
\item
$u \text{ is piecewise biharmonic on } V_m,$
\item
$\Delta_\zeta u \text{ is continuous at } y \text{ for } y \in V_m \setminus E,$
\item
$\Delta_\zeta u \text{ has matching normal derivatives at } y \text{ for } y \in V_m \setminus E.$
\end{enumerate}
From Lemma \ref{lemma: laplacianconstruction} we get that given a function $u$ which satisfies (1-5)  must satisfy (\ref{eulerlagrangelaplacian}) and from Lemma \ref{lemma:laplacianuniqueness} we have that $u$ must be the only function satisfying (1-5). Thus we only need to show the existence of a function $u$ which satisfies these properties. To show this we use the guaranteed existence of a minimizer and conclude that the  minimizer must satify these properties.
\begin{Lemma} \label{lemma: minimizerpiecewisebiharmonic}
Let $E \subset V_m$. The minimizer $u$ must be piecewise biharmonic on $V_m$.
\end{Lemma}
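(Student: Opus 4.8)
The plan is to use the Euler--Lagrange equation from Lemma \ref{lemma:eulerlagrangelaplacian} and localize it to a single $m$-cell, thereby reducing the claim to the statement that $\Delta_\zeta(u\circ F_w)$ is harmonic on the reference copy of $SG$ for each word $w$ with $|w|=m$. Recall that the minimizer $u$ satisfies $\int \Delta_\zeta u\,\Delta_\zeta v\,d\zeta = 0$ for every $v \in \dom_{L^2}\Delta_\zeta$ with $v|_E\equiv 0$ and $\partial_n v|_E\equiv 0$. The point of localization is that, since $E \subset V_m$ and the only points of $V_m$ lying in the cell $F_w(SG)$ are its three boundary vertices $F_w(V_0)$, any test function supported on that one cell and vanishing to first order on its boundary will automatically be admissible.

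First I would fix a word $w$ with $|w| = m$ and take an arbitrary $\tilde v \in \dom_{L^2}\Delta_\zeta$ with $\tilde v|_{V_0}\equiv 0$ and $\partial_n\tilde v|_{V_0}\equiv 0$. Define $v$ on $SG$ by $v\circ F_w = \tilde v$ on $F_w(SG)$ and $v \equiv 0$ off that cell. The vanishing of $\tilde v$ and its normal derivatives on $V_0$ guarantees continuity across the cell boundary and that the matching-normal-derivative condition holds at the junction vertices $F_w(V_0)$, so $v \in \dom_{L^2}\Delta_\zeta$; moreover $v|_E\equiv 0$ and $\partial_n v|_E\equiv 0$. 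Feeding $v$ into the Euler--Lagrange equation and using the self-similar identity for the Laplacian kills every term except the one coming from the cell $F_w(SG)$, leaving, after unfolding and discarding the nonzero scaling constant, $\int \Delta_\zeta(u\circ F_w)\,\Delta_\zeta\tilde v\,d\zeta = 0$ for all such $\tilde v$.

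It then remains to show this forces $g := \Delta_\zeta(u\circ F_w)$ to be harmonic. The key is to identify the range of $\Delta_\zeta$ on the space $\{\tilde v : \tilde v|_{V_0}=0,\ \partial_n\tilde v|_{V_0}=0\}$ with $H(K)^\perp$, the orthogonal complement in $L^2(\zeta)$ of the (three-dimensional) space of harmonic functions. One inclusion follows from Gauss--Green: for harmonic $h$ the boundary terms drop out by the double vanishing, giving $\int (\Delta_\zeta\tilde v)\,h\,d\zeta = 0$. For the reverse inclusion, given $f \in H(K)^\perp$ I would solve the Dirichlet problem $\Delta_\zeta w = f$, $w|_{V_0}=0$ using the Dirichlet Green's operator; Gauss--Green together with $\E(w,h)=\E(h,w)$ and $\int f h\,d\zeta = 0$ then shows $\sum_{q\in V_0} h(q)\,\partial_n w(q) = 0$ for every harmonic $h$, and since the boundary values of $h$ are arbitrary this yields $\partial_n w|_{V_0}=0$ at no extra cost. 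Hence $w$ is an admissible test function with $\Delta_\zeta w = f$, so the range is exactly $H(K)^\perp$. Consequently $\int g\,f\,d\zeta = 0$ for all $f\in H(K)^\perp$, i.e. $g\in (H(K)^\perp)^\perp = H(K)$, so $\Delta_\zeta(u\circ F_w)$ is harmonic and $u$ is biharmonic on $F_w(SG)$. As $w$ was arbitrary, $u$ is piecewise biharmonic on $V_m$.

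The main obstacle I anticipate is the surjectivity step --- showing every $f \in H(K)^\perp$ is realized as $\Delta_\zeta w$ for some doubly-vanishing $w$ --- since it is precisely here that one must argue the normal derivatives of the Dirichlet solution vanish for free; everything else (localization, the admissibility of the extended test function, and the single inclusion via Gauss--Green) is routine once the bookkeeping of boundary terms is set up carefully. A secondary point to watch is that the argument only needs $g\in L^2(\zeta)$ and never assumes $\Delta_\zeta(u\circ F_w)$ is itself in $\dom\Delta_\zeta$, which matters because that regularity is not available a priori.
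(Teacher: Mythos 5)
Your proof is correct, and it takes a genuinely different route from the paper's. The paper's proof is a comparison-of-problems argument: it introduces the auxiliary minimizer $\tilde{u}$ of $T_\zeta$ subject to matching the values \emph{and} normal derivatives of $u$ at every point of $V_m$, observes that each of $u$, $\tilde{u}$ is admissible for the other's problem so that $T_\zeta(u)=T_\zeta(\tilde{u})$, concludes $u=\tilde{u}$ by uniqueness, and then quotes Corollary \ref{corollary: biharmonicminimizer}, whose own proof requires knowing in advance that a piecewise biharmonic interpolant of arbitrary $V_m$-data exists (the spline construction of \cite{SU}) and verifying via Lemma \ref{lemma: laplacianconstruction} that it satisfies the Euler--Lagrange equation. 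You instead localize the Euler--Lagrange equation of Lemma \ref{lemma:eulerlagrangelaplacian} to a single $m$-cell using test functions vanishing to first order on the cell boundary, and then run a duality argument: the range of $\Delta_\zeta$ on doubly-vanishing functions is exactly $H(K)^\perp$, so $\Delta_\zeta(u\circ F_w)\in (H(K)^\perp)^\perp=H(K)$ because $H(K)$ is finite-dimensional and hence closed in $L^2(\zeta)$. What the paper's route buys is brevity, given that existence, uniqueness, and the spline corollary are already in place; what yours buys is self-containedness under weaker inputs --- you never need the existence of biharmonic splines, nor even uniqueness of the minimizer, only that $u$ satisfies the Euler--Lagrange equation --- and your surjectivity step (the Dirichlet solution of $\Delta_\zeta w=f$ with $f\perp H(K)$ automatically has $\partial_n w|_{V_0}=0$, by Gauss--Green and symmetry of $\E$) is precisely the piece of analysis that replaces the spline input. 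Two details to tighten: for a general measure $\zeta$ the unfolding step should be phrased with the pullback measure $\zeta\circ F_w$ rather than a literal self-similar identity (or, more cleanly, run the orthogonality argument on the cell itself without unfolding); and you should record that Gauss--Green and the existence of normal derivatives hold for functions in $\dom_{L^2}\Delta_\zeta$, not merely $\dom\Delta_\zeta$. Both points are standard, and the paper is equally casual about them in the proofs of Lemma \ref{lemma: laplacianconstruction} and Theorem \ref{theorem:laplacian2}.
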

\begin{proof}
Let $\tilde{u}$ be the minimizer subject to the constraints $\tilde{u}(z) = u(z)$ and $\partial_n \tilde{u}(z) = \partial_n u(z)$ for all $z\in V_m$. Since these constraints include the original ones, we must have $T_{\zeta}(u)\leq T_{\zeta}(\tilde{u})$. Furthermore since $u$ satisfies the constraints for the new problem, we have $T_{\zeta}(\tilde{u}) \leq T_{\zeta}(u)$. By uniqueness of the minimizer we immediately get $u = \tilde{u}$. \\

By Corollary \ref{corollary: biharmonicminimizer}, we have $u = \tilde{u}$ is piecewise biharmonic.
\end{proof}

\begin{Lemma}
$\Delta u$ is continuous and has matching normal derivaties at all $y \in V_m \setminus E$, where $u$ is the minimizer.
\end{Lemma}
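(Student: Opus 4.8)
The plan is to combine the Euler--Lagrange identity of Lemma \ref{lemma:eulerlagrangelaplacian} with the integration-by-parts formula of Lemma \ref{lemma: laplacianconstruction}, using the fact established in Lemma \ref{lemma: minimizerpiecewisebiharmonic} that the minimizer $u$ is piecewise biharmonic on $V_m$, so that (\ref{equation:biharmonic}) applies to $u$. Since $u$ satisfies (\ref{eulerlagrangelaplacian}), for every admissible test function $v$ (that is, $v \in \dom_{L^2}\Delta_\zeta$ with $v|_E \equiv 0$ and $\partial_n v|_E \equiv 0$) the right-hand side of (\ref{equation:biharmonic}) vanishes:
\[
0 = \sum_{|w|=m} r_w^{-1}\sum_{V_0}\left( (\Delta_\zeta u \circ F_w)\,\partial_n(v \circ F_w) - (v \circ F_w)\,\partial_n(\Delta_\zeta u \circ F_w)\right).
\]
The first step is to regroup this sum over cells into a sum over the junction points $y \in V_m$, collecting at each $y$ the contributions of all $m$-cells meeting there.

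Then I would use the constraints on $v$ to discard most terms. At every $y \in E$ we have $v(y)=0$ and, because $v \in \dom_{L^2}\Delta_\zeta$ obeys the gluing conditions, the constraint $\partial_n v(y)=0$ forces both one-sided normal derivatives of $v$ at $y$ to vanish; hence every term indexed by a point of $E$ drops out, leaving a sum over $y \in V_m \setminus E$. At an interior junction point $y=F_w q_j = F_{w'}q_{j'}$ where two cells meet, continuity of $v$ gives a common value $\beta := v(y)$, and the matching-normal-derivative condition for $v$ gives $r_w^{-1}\partial_n(v\circ F_w)(q_j) = -\,r_{w'}^{-1}\partial_n(v\circ F_{w'})(q_{j'}) =: \gamma$. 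Substituting, the contribution at $y$ collapses to
\[
\gamma\big[(\Delta_\zeta u \circ F_w)(q_j) - (\Delta_\zeta u \circ F_{w'})(q_{j'})\big] - \beta\big[r_w^{-1}\partial_n(\Delta_\zeta u \circ F_w)(q_j) + r_{w'}^{-1}\partial_n(\Delta_\zeta u \circ F_{w'})(q_{j'})\big].
\]

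The decisive step is to produce enough test functions to isolate each bracket. For this I would construct, for each target $y \in V_m \setminus E$, a piecewise biharmonic $v$ on $V_m$ whose vertex values and normal derivatives are prescribed to be zero at every point of $V_m$ except at $y$, where $v(y)$ and $\partial_n v(y)$ take arbitrary preassigned values. Such a $v$ exists and is continuous with matching normal derivatives, hence lies in $\dom_{L^2}\Delta_\zeta$ and meets the $E$-constraints, because a biharmonic function on a single cell is freely determined by its boundary values and normal derivatives and these glue consistently across junction points. Feeding such $v$ into the reduced identity makes all contributions vanish except the one at $y$, and varying $\beta$ and $\gamma$ independently forces both brackets to be zero. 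This yields exactly $(\Delta_\zeta u \circ F_w)(q_j) = (\Delta_\zeta u \circ F_{w'})(q_{j'})$, the continuity of $\Delta_\zeta u$ at $y$, together with $r_w^{-1}\partial_n(\Delta_\zeta u \circ F_w)(q_j) + r_{w'}^{-1}\partial_n(\Delta_\zeta u \circ F_{w'})(q_{j'}) = 0$, the matching of normal derivatives of $\Delta_\zeta u$ at $y$. I expect the main obstacle to be precisely the construction and admissibility of these localized test functions---confirming that prescribing a single vertex's data while zeroing the rest produces a genuine element of $\dom_{L^2}\Delta_\zeta$ satisfying the $E$-constraints; once this freedom is secured, the independence argument that separates the two brackets is routine.
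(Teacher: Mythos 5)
Your proposal is correct and follows essentially the same route as the paper: the paper likewise feeds localized piecewise biharmonic test functions into (\ref{equation:biharmonic}) and invokes Lemma \ref{lemma:eulerlagrangelaplacian}, its two functions $v_z$ (unit normal derivative at $z$, all other data zero) and $w_z$ (unit value at $z$, all other data zero) being exactly your $(\beta,\gamma)=(0,1)$ and $(1,0)$ choices that isolate the two brackets. The admissibility point you flag is real but standard (a biharmonic function on a cell is determined by its three vertex values and three normal derivatives, and matching normal derivatives guarantee membership in $\dom_{L^2}\Delta_\zeta$), and the paper simply takes it for granted.
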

\begin{proof}
Since $u$ is piecewise biharmonic, we consider the piecewise biharmonic functions $v_z,w_z \in \dom_{L^2} \Delta$ for $z \in V_m \setminus E$ which satisfy
$$v_z|_{V_m} \equiv 0,$$
$$\partial_n v_z|_{V_m \setminus \{z\}} \equiv 0,$$
$$\partial_n v_z(z) = 1,$$
and
$$w_z|_{V_m \setminus \{z\}} \equiv 0,$$
$$w_z(z) = 1,$$
$$\partial_n w_z|_{V_m} \equiv 0.$$
In equation (\ref{equation:biharmonic}) we put $v=v_z$ and by Lemma \ref{lemma:eulerlagrangelaplacian} we get the continuity at $z$. For the second part, this time plugging in $v=w_z$ in equation (\ref{equation:biharmonic}) we obtain the matching normal derivatives at $z$. Hence, the result.
\end{proof}

Hence, we have fully characterized the unique minimizer as the function which is piecewise biharmonic on $V_m$ and has a Laplacian which is harmonic away from $E$.

\subsection{Constraining Values Only}

In this section, we only fix the values on $E.$ When $E \subset V_*$ we can view this as minimizing over the normal derivatives using our previous model. We have already seen existence and uniqueness. Also, the previous section guarantees that the minimizer is piecewise biharmonic. Here we find some properties of the minimizer. Throughout this section we assume $\zeta = \mu$. We write $\Delta = \Delta_{\mu}$ and $T = T_{\mu}$.

First we get the equivalent Euler Lagrange equation as before; in this case we allow more sample functions $v$ since we have relaxed the constraints. The proof is the same as for Lemma \ref{lemma:eulerlagrangelaplacian}. 
\begin{Lemma}
If $u$ is a minimizer of $T$ with respect to the constraints $u(x_i) = a_i$ then $u$ satisfies 
\begin{equation}\label{equation:eulerlagrange2}\int \Delta u \Delta v d \mu = 0\end{equation} for all $v \in \dom_{L^2}\Delta$ with $v|_E \equiv 0$.
\end{Lemma}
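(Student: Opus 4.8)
The plan is to reproduce the first-variation (Euler--Lagrange) argument of Lemma \ref{lemma:eulerlagrangelaplacian}, the only difference being that the admissible class of test functions is now larger because the normal-derivative constraints have been dropped. First I would fix an arbitrary $v \in \dom_{L^2}\Delta$ with $v|_E \equiv 0$ and check that $u+tv$ remains admissible for every $t \in \R$: since $(u+tv)(x_i) = u(x_i) + t\,v(x_i) = a_i$ for each $x_i \in E$, the perturbed function still satisfies the value constraints. In the earlier lemma one additionally needed $\partial_n v|_E \equiv 0$ to preserve the prescribed normal derivatives; here there are none, so the single requirement $v|_E \equiv 0$ suffices, which is exactly why more test functions $v$ are allowed.

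Next I would set $f(t) = T(u+tv)$ and expand it using the bilinearity of the form $(v,w)\mapsto \int \Delta v\,\Delta w\, d\mu$, precisely as in the proof of Lemma \ref{lemma:eulerlagrangelaplacian}:
\begin{equation*}
T(u+tv) = \int (\Delta u)^2\, d\mu + 2t \int \Delta u\,\Delta v\, d\mu + t^2 \int (\Delta v)^2\, d\mu .
\end{equation*}
This is a genuine quadratic polynomial in $t$: the hypothesis $v \in \dom_{L^2}\Delta$ guarantees $\Delta v \in L^2(\mu)$, so each of the three integrals is finite and the cross term is well defined.

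Finally, since $u$ minimizes $T$ over the admissible set and $u+tv$ is admissible for all $t$, the scalar function $f$ has a minimum at $t=0$, so $f'(0)=0$. Differentiating the displayed expansion gives $f'(0) = 2\int \Delta u\,\Delta v\, d\mu$, whence $\int \Delta u\,\Delta v\, d\mu = 0$, which is (\ref{equation:eulerlagrange2}). I do not expect any genuine obstacle here: the entire content lies in the bookkeeping observation that constraining values only enlarges the set of allowed variations to all of $\{v \in \dom_{L^2}\Delta : v|_E \equiv 0\}$, while the quadratic-expansion mechanism producing the vanishing first derivative is identical to the one already carried out for Lemma \ref{lemma:eulerlagrangelaplacian}.
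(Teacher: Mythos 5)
Your proposal is correct and follows exactly the argument the paper intends: the paper itself states that the proof is the same as for Lemma \ref{lemma:eulerlagrangelaplacian}, namely the first-variation expansion of $T(u+tv)$ with the observation that only the value constraints need to be preserved, so the larger class of test functions $v$ with $v|_E \equiv 0$ is admissible. Nothing is missing.
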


Using this we get two new properties of the minimizer $u$.
\begin{Lemma}The minimizer satisfies $u \in \dom \Delta$ and $\Delta u|_{V_0} = 0.$
\end{Lemma}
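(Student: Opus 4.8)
The plan is to combine the Euler--Lagrange equation (\ref{equation:eulerlagrange2}) with the integration-by-parts identity of Lemma \ref{lemma: laplacianconstruction}, exploiting the fact that in the values-only problem the admissible test functions $v$ are constrained only by $v|_E \equiv 0$, so their normal derivatives are completely free. The previous section already guarantees that the minimizer $u$ is piecewise biharmonic on some $V_m$ with $E \subset V_m$, so $\Delta u$ is harmonic --- hence continuous --- on each $m$-cell, and the only issue is the behavior of $\Delta u$ across the junction points of $V_m$ and at $V_0$. Since all $m$-cells carry the common factor $r_w^{-1} = (5/3)^m$, the boundary sum in (\ref{equation:biharmonic}) regroups naturally by junction point, and choosing localized test functions will isolate one scalar quantity at a time.

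First I would establish $u \in \dom \Delta$, i.e. that $\Delta u$ is globally continuous. Fix an interior junction $z \in V_m \setminus V_0$, shared by two $m$-cells, and take the piecewise biharmonic ``bump'' $v_z$ (as in the construction section) that vanishes on all of $V_m$, has vanishing one-sided normal derivatives at every point of $V_m$ except $z$, and carries the antisymmetric one-sided normal derivatives $\pm 1$ at $z$. Because $v_z$ vanishes on all of $V_m \supseteq E$, it is admissible for the values-only problem \emph{even when $z \in E$} --- this is exactly the point where this case differs from the $E=F$ case. Substituting $v=v_z$ into (\ref{equation:biharmonic}), every term containing the value of $v_z$ drops out and only the two normal-derivative terms at $z$ survive, leaving (up to $(5/3)^m$) the difference of the two one-sided values of $\Delta u$ at $z$; then (\ref{equation:eulerlagrange2}) forces these to agree. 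Hence $\Delta u$ is continuous at every interior junction, and being harmonic on each cell it is continuous on $SG$. Since $u$ already has matching normal derivatives as an element of $\dom_{L^2}\Delta$, there is no singular part, and $u \in \dom\Delta$.

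Next I would prove the natural boundary condition $\Delta u|_{V_0} = 0$. For a boundary vertex $q \in V_0$ only one $m$-cell meets $q$, so there is no matching constraint on the normal derivative there; I would construct the admissible test function $v$ vanishing on all of $V_m$ with boundary normal derivative equal to $1$ at $q$ and vanishing normal derivative at every other point of $V_m$. Whether or not $q \in E$, this $v$ is admissible because its value at $q$ is $0$ and only its normal derivative is nonzero. Plugging it into (\ref{equation:biharmonic}) isolates the single term $(5/3)^m\,\Delta u(q)\,\partial_n v(q) = (5/3)^m\,\Delta u(q)$, so (\ref{equation:eulerlagrange2}) gives $\Delta u(q) = 0$. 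Repeating over the three points of $V_0$ yields $\Delta u|_{V_0}=0$.

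The main obstacle is bookkeeping rather than conceptual: one must check that these localized piecewise biharmonic test functions genuinely lie in $\dom_{L^2}\Delta$ (continuity together with matching of the one-sided normal derivatives away from the prescribed point, so that $\Delta v \in L^2$ with no delta mass) and track the scaling $r_w^{-1}=(5/3)^m$ together with the antisymmetry of the one-sided normal derivatives, so that the boundary sum collapses to precisely the jump of $\Delta u$ at $z$, or the boundary value $\Delta u(q)$, that one wishes to annihilate. The conceptual heart is simply that freeing the normal derivatives enlarges the test space, and this enlargement is exactly what forces continuity of $\Delta u$ even at points of $E$ and the natural condition $\Delta u|_{V_0}=0$.
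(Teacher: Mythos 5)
Your proposal is correct and takes essentially the same approach as the paper: the paper's proof uses exactly the same localized piecewise biharmonic test functions $v_z$ (vanishing on $V_m$, with one-sided normal derivative prescribed only at $z$), substituted into the identity of Lemma \ref{lemma: laplacianconstruction} together with the Euler--Lagrange equation (\ref{equation:eulerlagrange2}), to force continuity of $\Delta u$ across interior junctions and $\Delta u(q_i)=0$ at the three boundary points. The only difference is that you make explicit the admissibility of $v_z$ even when $z \in E$ and the scaling bookkeeping, both of which the paper leaves implicit.
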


\begin{proof} 

Define the piecewise biharmonic function $v_z$ with 
$$v_z|_{V_m} \equiv 0,$$
$$\partial_n v_z|_{V_m \setminus \{z\}} \equiv 0,$$
$$\partial_n v_z(F_w q_i) = 1.$$

For $z \in V_m \setminus V_0$ we have $z = F_w q_i = F_{w'} q_j$.
Then applying Lemma \ref{lemma: laplacianconstruction} and 
(\ref{equation:eulerlagrange2}) we get
$\Delta (u \circ F_w)(q_i) = \Delta (u \circ F_{w'})(q_j).$ This means that $\Delta u$ is continuous so $u \in \dom \Delta$. \\

For $z=q_i \in V_0$ we apply Lemma \ref{lemma: laplacianconstruction} and 
(\ref{equation:eulerlagrange2}) again to get
$\Delta u (q_i) = 0$.
\end{proof}
\subsubsection{Calculating the Minimizing Form}
\label{section:calculations}
Using the continuity and the Dirichlet boundary conditions of $\Delta u$, we can relate $\Delta_m$ and $\Delta,$ giving us a system of equations to solve in order to calculate $T(u)$. Throughout this section, we continue to assume that $\zeta=\mu$.\\

We calculate here some basic facts to develop a system of equations to find $T(u)$. Recall that $h_i$ is the standard basis for harmonic functions on SG . From \cite{SU} we have that

$$ ~~~~~~~~~~~~~~~~~~~~~~~~~~~~M_{i,j} = \int h_i h_j d \mu~~\text{is given explicitly as}$$ $$M_{i,i} = \frac{7}{45},$$ $$~~~~~~~~~~~~~M_{i,j} = \frac{4}{45} ~~\text{for}~i\neq j.$$

Now we relate the boundary data of a biharmonic function to its Laplacian. Suppose the biharmonic function satisfies $u(q_i) = a_i$ , $\partial_n u(q_i) = b_i$ and $\Delta u(q_i)=c_i. $ 
Given such a $u$ we have
\begin{equation}\label{matrixeq}
\left(
 \begin{matrix}
  2 & -1 & -1  \\
  -1 & 2 & -1  \\
  -1 & -1 & 2  \\
 \end{matrix}\right)\left(
 \begin{matrix}
  a_0 \\
  a_1 \\
  a_2 \\
 \end{matrix}\right)
 +
 \left( \begin{matrix}
   -1 & 0 & 0 \\
   0 & -1 & 0 \\
   0 & 0 & -1 \\
 \end{matrix}\right)
 \left(
 \begin{matrix}
  b_0 \\
  b_1 \\
  b_2 \\
 \end{matrix} \right)
 = -\frac{1}{45}\left(
 \begin{matrix}
  7 & 4 & 4 \\
  4 & 7 & 4 \\
  4 & 4 & 7 \\
 \end{matrix}\right)\left(
 \begin{matrix}
  c_0 \\
  c_1 \\
  c_2 \\
 \end{matrix}\right),
 \end{equation}
 
 from which it follows that

\begin{equation}\label{equ:partialuqi}
\partial_n u(q_i) = 2u(q_i) - u(q_{i+1}) - u(q_{i-1}) + \frac{1}{45}(7\Delta u(q_i) + 4 \Delta u(q_{i+1}) + 4 \Delta u(q_{i-1}))
\end{equation} 
and
 $$\Delta u(q_i) = -15(2u(q_i) - u(q_{i+1}) - u(q_{i-1}))+11\partial_nu(q_i) - 4\partial_nu(q_{i+1}) -4\partial_nu(q_{i-1}).$$
 
 \begin{Lemma}
Given any $z=F_w(q_i)=F_{w'}(q_{j}) \in V_m \setminus V_0$, we have 
  \[
 \Delta_m u(z) = \frac{1}{45\cdot5^m}(14 \Delta u(z) + 4 \sum_{y \underset{m}{\sim} z} \Delta u (y)).
 \]
 \end{Lemma}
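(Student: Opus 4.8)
The plan is to exploit the fact that the junction point $z$ belongs to exactly two $m$-cells, $F_w(SG)$ and $F_{w'}(SG)$, and that the discrete Laplacian $\Delta_m u(z) = \sum_{y \underset{m}{\sim} z}(u(y) - u(z))$ splits as a sum of one local second difference per cell. The four neighbors of $z$ are $F_w q_{i+1}, F_w q_{i-1}$ and $F_{w'}q_{j+1}, F_{w'}q_{j-1}$. Writing $v = u\circ F_w$ and $v' = u\circ F_{w'}$ (both biharmonic on $SG$, since the minimizer $u$ is piecewise biharmonic on $V_m$), the contribution of the cell $F_w$ to $\Delta_m u(z)$ is exactly $-(2v(q_i) - v(q_{i+1}) - v(q_{i-1}))$, and similarly for $F_{w'}$.

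First I would apply the biharmonic boundary identity (\ref{equ:partialuqi}) to $v$ and to $v'$, rewriting each second difference $2v(q_i) - v(q_{i+1}) - v(q_{i-1})$ as $\partial_n v(q_i) - \frac{1}{45}(7\Delta v(q_i) + 4\Delta v(q_{i+1}) + 4\Delta v(q_{i-1}))$. I would then convert the Laplacian values of $v$ into those of $u$ via the self-similar scaling of $\Delta = \Delta_\mu$, namely $\Delta(u\circ F_w) = 5^{-m}(\Delta u)\circ F_w$ for $|w| = m$ (which comes from $\mu_w r_w = 3^{-m}(3/5)^m = 5^{-m}$), so that $\Delta v(q_i) = 5^{-m}\Delta u(z)$ and $\Delta v(q_{i\pm 1}) = 5^{-m}\Delta u(F_w q_{i\pm 1})$, and likewise for $v'$. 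Summing the two cell contributions then gives
\begin{equation*}
\Delta_m u(z) = -\big(\partial_n v(q_i) + \partial_n v'(q_j)\big) + \frac{1}{45\cdot 5^m}\Big(14\,\Delta u(z) + 4\sum_{y\underset{m}{\sim}z}\Delta u(y)\Big),
\end{equation*}
once one observes that the four neighbor terms assemble exactly into $\sum_{y\underset{m}{\sim}z}\Delta u(y)$ and that the two copies of $7\,\Delta u(z)$ combine into $14\,\Delta u(z)$.

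The decisive step is the cancellation of the normal-derivative terms. Since the minimizer $u$ lies in $\dom\Delta$ (established in the preceding lemma), its normal derivatives match at the junction $z$, i.e. $\partial_n v(q_i) + \partial_n v'(q_j) = 0$; this is precisely the gluing condition that the unscaled normal derivatives of the two cell restrictions are opposite, the common factor $(5/3)^m$ being irrelevant. With this term gone the claimed formula follows. The only genuine subtleties are getting the scaling exponent right in $\Delta(u\circ F_w) = 5^{-m}(\Delta u)\circ F_w$ and ensuring that $\Delta u(z)$ and the $\Delta u(y)$ are single-valued, which is guaranteed by the continuity of $\Delta u$ (again $u \in \dom\Delta$). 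As a consistency check, letting $m \to \infty$ and using $\Delta u(y) \to \Delta u(z)$ recovers $\Delta u(z) = \tfrac{3}{2}5^m\Delta_m u(z)$, the standard pointwise formula.
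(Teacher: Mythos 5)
Your proof is correct and follows essentially the same route as the paper's: apply the biharmonic boundary identity (\ref{equ:partialuqi}) to the two cell restrictions $u\circ F_w$ and $u\circ F_{w'}$, use the scaling $\Delta(u\circ F_w)=r_w\mu_w\,(\Delta u)\circ F_w = 5^{-m}(\Delta u)\circ F_w$, and sum so that the matching normal derivatives (from $u\in\dom\Delta$) cancel. The only difference is expository: you make explicit the normal-derivative cancellation and the single-valuedness of $\Delta u$ at junction points, which the paper leaves implicit in ``summing them up.''
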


 \begin{proof}
Replacing $u$ by $u\circ F_w$ in  equation (\ref{equ:partialuqi}), we have

 $$-\partial_n (u \circ F_w)(q_i)  =  - 2 (u\circ F_w)(q_i) + (u\circ F_w)(q_{i-1}) + (u\circ F_w)(q_{i+1})$$
$$ - \frac{1}{45} (7 \Delta (u\circ F_w)(q_i)+ 4 \Delta (u\circ F_w)(q_{i-1}) + 4 \Delta (u\circ F_w) (q_{i+1})) $$
and

$$-\partial_n (u \circ F_{w'})(q_j)  =  - 2 (u\circ F_{w'})(q_j) + (u\circ F_{w'})(q_{j-1}) + (u\circ F_{w'})(q_{j+1})$$
$$ - \frac{1}{45} (7 \Delta (u\circ F_{w'})(q_j)+ 4 \Delta (u\circ F_{w'})(q_{j-1}) + 4 \Delta (u\circ F_{w'}) (q_{j+1})). $$
 
 Then, summing them up and using $\Delta(u \circ F_w)=r_{w} \mu_{w}\Delta u \circ F_w$, $r_w=\left(\frac{3}{5} \right)^m~ \text{and}~ \mu_w=\left(\frac{1}{3}\right)^m$ 
 yields the result.
 \end{proof}
 
 Then we have $|V_m\setminus V_0|$ number of equations and we know that $\Delta u|_{V_0}=0$ so we can solve the above system of equations and express $\Delta u |_{V_m\setminus V_0}$ in terms of $u|_{V_m}$.
By Gauss-Green, the fact that $\Delta u$ is piecewise harmonic, and the fact $\Delta u$ vanishes on the boundary we have 
\begin{align*}
\int_{SG} |\Delta u(x)|^2 d \mu(x)=-\E(\Delta u,u)&=-\sum_{|w|=m}r_w^{-1}\E(\Delta u \circ F_w,u \circ F_w)\\
&=-r^{-m}\sum_{|w|=m}{\E_0(\Delta u \circ F_w, u \circ F_w)}\\
&=-r^{-m}\sum_{V_m \setminus V_0}{\Delta u(x) \left(\sum_{y \underset{m}{\sim} x}{u(y) - u(x)}\right)}\\
&=r^{-m}\sum_{V_m \setminus V_0}\Delta u \Delta_m u.
\end{align*}

\subsubsection{Generalization to all of $SG$}
In this section, we present a construction for the minimizer when $E = \{x_1, \dots, x_n\} \subset SG$ is not restricted to the junction points. As before we require Dirichlet boundary conditions. This construction parallels the construction for the energy minimizer. Note that Theorem \ref{theorem:laplacian1} generalizes immediately to all $E \subset SG$ so we get uniqueness of the minimizer.

\begin{Lemma} \label{generallaplacianeuler}
Suppose $u(x)=\sum_{i=1}^{n}  c_i \int G(x,y)G(x_i,y) d\mu (y)$ for some constants  $c_i$. Then for all $v \in dom_{L^2}\Delta$ such that $v|_{V_0 \cup E}=0$,we have $$\int \Delta u \Delta v d \mu=0. $$

\end{Lemma}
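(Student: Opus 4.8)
The plan is to verify the stated Euler--Lagrange identity directly by recognizing that the proposed function $u(x) = \sum_{i=1}^n c_i \int G(x,y) G(x_i,y)\,d\mu(y)$ is built so that its Laplacian is a superposition of Green's function values. Recall that the Green's operator inverts $-\Delta$ with Dirichlet boundary conditions, so $-\Delta_x \int G(x,y) g(y)\,d\mu(y) = g(x)$ for suitable $g$. First I would apply this to the inner integral: fixing $i$, the function $x \mapsto \int G(x,y) G(x_i,y)\,d\mu(y)$ has Laplacian $-G(x_i,x) = -G(x,x_i)$ by symmetry of the Green's function. Summing over $i$ gives the clean formula
\begin{equation}\label{equation:deltau}
\Delta u(x) = -\sum_{i=1}^n c_i\, G(x,x_i).
\end{equation}

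With this in hand, the second step is to compute $\int \Delta u \,\Delta v\, d\mu$ for an arbitrary $v \in \dom_{L^2}\Delta$ with $v|_{V_0 \cup E} = 0$. Substituting \eqref{equation:deltau} yields
\[
\int \Delta u\, \Delta v\, d\mu = -\sum_{i=1}^n c_i \int G(x,x_i)\, \Delta v(x)\, d\mu(x).
\]
The key observation is that $\int G(x,x_i)\,\Delta v(x)\,d\mu(x)$ is, up to sign, just the Green's operator applied to $\Delta v$ and evaluated at $x_i$. Because $v$ vanishes on $V_0$ (Dirichlet boundary conditions) and $G$ is the Green's function for $-\Delta$ with those same conditions, the Green's operator recovers $v$ from $-\Delta v$, so this integral equals $-v(x_i)$. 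Hence the whole expression collapses to $\sum_i c_i\, v(x_i)$, which is zero precisely because $v|_E = 0$.

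The main obstacle, and the step requiring the most care, is justifying the reproducing identity $\int G(x,x_i)\,\Delta v(x)\,d\mu(x) = -v(x_i)$ at the level of regularity available: $v$ lies in $\dom_{L^2}\Delta$ rather than being smooth, so I would phrase this via the self-adjointness of the Green's operator together with the Gauss--Green formula. Concretely, I would write $\int G(x,x_i)\Delta v(x)\,d\mu(x) = \E(G(\cdot,x_i), v) - (\text{boundary terms})$, observing that the boundary terms vanish because $v|_{V_0}=0$ and $G(\cdot,x_i)|_{V_0}=0$, and that $\E(G(\cdot,x_i),v) = -v(x_i)$ is exactly the defining weak property of the Green's function tested against $v \in \dom\E$ with Dirichlet conditions. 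One must also confirm the swap of summation and integration (finite sum, so immediate) and that $u$ genuinely lies in $\dom_{L^2}\Delta$ so that $\Delta u$ is well-defined in $L^2$; this follows from \eqref{equation:deltau} since each $G(\cdot,x_i)$ is in $L^2(\mu)$. Combining these pieces gives $\int \Delta u\,\Delta v\,d\mu = \sum_i c_i v(x_i) = 0$, as required.
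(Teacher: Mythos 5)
Your proposal is correct and follows essentially the same route as the paper: compute $\Delta u(x) = -\sum_i c_i G(x_i,x)$ from the inverse property of the Green's operator, use the Dirichlet reproducing identity $\int G(x_i,x)\Delta v(x)\,d\mu(x) = -v(x_i)$, and conclude from $v|_E = 0$. The paper states this in a single line, while you supply the justifications (Gauss--Green, symmetry of $G$, membership of $u$ in $\dom_{L^2}\Delta$) that it leaves implicit.
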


\begin{proof}
For such $v$, we have $$\int \Delta u \Delta v d \mu=\int (-\sum c_i G(x_i,x))(\Delta v(x)) d\mu(x)=\sum_{i} c_i v(x_i)=0.$$
\end{proof}

Thus if we can find a function of the above form satisfying the constraints, then we get a function which satisfies the neccessary condition so it is the unique minimizer.

\begin{Lemma}
$$[G]_{i,j}=\int G(x_i,y)G(x_j,y) d \mu (y)$$ is invertible. Thus we can find $c_i$ such that $u=\sum_{i=1}^{n}  c_i \int G(x,y)G(x_i,y) d \mu (y)$ satisfies the constraints.
\end{Lemma}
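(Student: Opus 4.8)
The plan is to recognize $[G]$ as a Gram matrix, reduce its invertibility to a linear independence statement for the functions $y \mapsto G(x_i,y)$, and then kill that dependence using the reproducing property of the Green's function together with a supply of separating test functions.

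First I would set $g_i(y) = G(x_i,y)$, so that $[G]_{i,j} = \int_{SG} g_i g_j \, d\mu = \langle g_i, g_j\rangle_{L^2(\mu)}$. For any vector $c = (c_1,\dots,c_n)$ one has
$$\sum_{i,j} c_i c_j \, [G]_{i,j} = \int_{SG} \Big(\sum_i c_i g_i(y)\Big)^2 d\mu(y) \geq 0,$$
so $[G]$ is symmetric and positive semidefinite. It is therefore invertible if and only if it is positive definite, which holds if and only if $g_1,\dots,g_n$ are linearly independent in $L^2(\mu)$. Thus the entire problem reduces to showing that $\phi := \sum_i c_i G(x_i,\cdot) = 0$ in $L^2(\mu)$ forces $c = 0$ (here I assume, as the Dirichlet setting requires, that the distinct points $x_i$ lie in $SG \setminus V_0$, since otherwise $g_i \equiv 0$).

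Next I would invoke the reproducing identity already used in the proof of Lemma \ref{generallaplacianeuler}: because the Green's operator inverts $-\Delta$ under Dirichlet conditions, for every $v \in \dom_{L^2}\Delta$ with $v|_{V_0} \equiv 0$ we have $\int G(x_i,y)(-\Delta v(y)) \, d\mu(y) = v(x_i)$. Consequently, if $\phi = 0$ almost everywhere then
$$0 = \int_{SG} \phi(y)\,(-\Delta v(y)) \, d\mu(y) = \sum_i c_i\, v(x_i)$$
for all such $v$. It then remains to produce, for the distinct interior points $x_1,\dots,x_n$, functions $v$ with Dirichlet boundary conditions whose value-vectors $(v(x_1),\dots,v(x_n))$ span $\R^n$; choosing $v$ to realize each basis vector forces $c_k = 0$ for every $k$, giving the desired independence.

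The main obstacle is exactly this construction of separating functions. For distinct points one can take $m$ large enough that the $x_i$ lie in disjoint $m$-cells bounded away from $V_0$, and on each such cell build a localized $v_k \in \dom_{L^2}\Delta$ that is biharmonic on the cell, equals $1$ at $x_k$, and has vanishing value and normal derivative on the cell boundary, extended by zero; this $v_k$ lies in $\dom_{L^2}\Delta$, vanishes on $V_0$, and satisfies $v_k(x_i) = \delta_{ik}$. The delicate bookkeeping is handling points $x_i$ that happen to be junction points shared by several cells. An appealing alternative that sidesteps the construction is to observe that $-\Delta_y G(x_i,\cdot) = \delta_{x_i}$, so $-\Delta \phi = \sum_i c_i \delta_{x_i}$, and distinct point masses are linearly independent as measures, immediately forcing $c = 0$. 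Either way, with $[G]$ invertible I would solve $[G]c = a$ for the unique $c$; then $u = \sum_i c_i \int G(\cdot,y)G(x_i,y)\,d\mu(y)$ satisfies $u(x_j) = \sum_i c_i [G]_{j,i} = a_j$, so $u$ meets the constraints, satisfies the Euler--Lagrange equation by Lemma \ref{generallaplacianeuler}, and is the minimizer by the uniqueness established in Theorem \ref{theorem:laplacian1}.
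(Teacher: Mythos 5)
Your Gram-matrix reduction is, at its core, the same computation the paper makes, just packaged differently: the paper takes a kernel vector $c$ of $[G]$, forms $u=\sum_i c_i\int G(\cdot,y)G(x_i,y)\,d\mu(y)$, observes that $[G]c=0$ means $u$ vanishes on $E\cup V_0$ so that $u$ is itself an admissible test function, and plugs $v=u$ into Lemma \ref{generallaplacianeuler} to get $\int|\Delta u|^2\,d\mu=\sum_i c_i u(x_i)=0$; this is precisely your identity $c^{T}[G]c=\|\sum_i c_i G(x_i,\cdot)\|^2_{L^2(\mu)}=0$, obtained without needing any external test functions. From there the paper concludes $u$ is harmonic, vanishes on $V_0$, hence $u\equiv 0$, and asserts injectivity of $[G]$. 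Note that this last assertion silently uses that $\sum_i c_i G(x_i,\cdot)\equiv 0$ forces $c=0$ --- exactly the linear independence of the Green's functions that your third paragraph is devoted to proving. So your proposal is not only correct in outline, it makes explicit a step the paper elides; likewise your remark that the $x_i$ must be distinct and avoid $V_0$ (else $[G]$ has a zero row) is a genuine implicit hypothesis.

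However, your primary construction of the separating functions cannot work as stated. A function biharmonic on a single cell is completely determined by its values and normal derivatives at the three vertices of that cell (this is the content of equation (\ref{matrixeq}) and the remark following Corollary \ref{corollary: biharmonicminimizer}: the $6$-dimensional space of biharmonic functions is parametrized by this boundary data). Hence a function that is biharmonic on a cell with vanishing value and normal derivative on the cell boundary is identically zero there, and cannot equal $1$ at an interior point $x_k$. You need more degrees of freedom: a piecewise biharmonic spline on a finer subdivision of the cell with matching normal derivatives (so that it still lies in $\dom_{L^2}\Delta$), or a smooth bump function, or a density argument showing that Dirichlet functions in $\dom_{L^2}\Delta$ are sup-norm dense among continuous functions vanishing on $V_0$. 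Your delta-mass alternative does not actually sidestep this issue: from $\phi=0$ you only know that $\sum_i c_i\delta_{x_i}$ annihilates functions in $\dom_{L^2}\Delta$ vanishing on $V_0$, not all of $C(SG)$, so you still need test functions from that class separating the $x_i$. These repairs are standard, so the proposal stands once that one construction is replaced --- or, more efficiently, once you note that linear independence is the only missing ingredient and that the rest of your argument (and the paper's) is untouched by how it is established.
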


\begin{proof}
Suppose we have $c_i$ with $u=\sum_{i=1}^{n}  c_i \int G(x,y)G(x_i,y) d\mu (y)$ such that $u_{E \cup V_0}=0$. Then we apply Lemma \ref{generallaplacianeuler} with $v=u$ to get $\int |\Delta u|^2 d \mu=0.$ This implies $u$ is a harmonic function but it vanishes at boundary so it is zero. This proves the injectivity of $G$, which gives invertibility (and surjectivity). We can solve for $c_i$ such that $u(x_i) = a_i$.
\end{proof}

\vspace{20pt}

\begin{small}
\begin{flushleft}
\textbf{Pak Hin Li}\\
\textsc{Department of Mathematics, Lady Shaw Building, The Chinese University of Hong Kong, Shatin, Hong Kong.} \\
\textsc{E-mail:}\ \textbf{pakhinbenli@gmail.com} 

\vspace{5pt}

\textbf{Nicholas Ryder}\\
\textsc{Department of Mathematics, Rice University, 6100 Main St, Houston, TX 77005 USA.}\\
\textsc{E-mail:}\ \textbf{nick.ryder@rice.edu}

\vspace{5pt}

\textbf{Robert S. Strichartz}\\
\textsc{Department of Mathematics, Cornell University, Ithaca, NY 14853, USA.}\\
\textsc{E-mail:}\ \textbf{str@math.cornell.edu}

\vspace{5pt}

\textbf{Baris Evren Ugurcan}\\
\textsc{Department of Mathematics, Cornell University, Ithaca, NY 14853, USA.}\\
\textsc{E-mail:}\ \textbf{beu4@cornell.edu} 
\end{flushleft}
\end{small}

\end{document}